\documentclass[12pt]{amsart}
\usepackage{amssymb}

\usepackage{amscd, amssymb, amsfonts, amsmath, amstext, amsthm}

\newcommand{\ot}{\otimes}

\textwidth=432pt
\oddsidemargin=18pt \evensidemargin=18pt

\newtheorem{theorem}{Theorem}[section]
\newtheorem{proposition}[theorem]{Proposition}
\newtheorem{corollary}[theorem]{Corollary}
\newtheorem{lemma}[theorem]{Lemma}
\theoremstyle{definition}
\newtheorem{definition}[theorem]{Definition}
\theoremstyle{remark}
\newtheorem{remark}[theorem]{Remark}
\numberwithin{equation}{section}

\DeclareMathSymbol{\rtimes}         {\mathbin}{AMSb}{"6F}

\begin{document}
\title{On semisimple Hopf algebras of dimension $2^{m}$, II.}
\author{Yevgenia Kashina}
\address{Department of Mathematical Sciences\\
DePaul University\\
Chicago, IL, 60614}
\email{ykashina@depaul.edu}

\begin{abstract}
In this paper we classify, up to equivalence, all semisimple nontrivial
Hopf algebras of dimension $%
2^{2n+1}$ for $n\geq 2$ over an algebraically closed field of characteristic $%
0$ with the group of group-like elements isomorphic to $\mathbb{Z}_{2^{n}}\times
\mathbb{Z}_{2^{n}}$. Moreover we classify all such nonisomorphic Hopf algebras of dimension $32$ and show that they are not twist-equivalent to each other. More generally, given an abelian group  of order $2^{m-1}$ we give an upper bound for the number of nonisomorphic nontrivial Hopf algebras of dimension $2^{m}$ which have this group as their group of group-like elements.
\end{abstract}

\maketitle

\allowdisplaybreaks
\section{Introduction}
An important problem in the theory of semisimple Hopf algebras is to construct examples satisfying certain properties and in the second step classify all Hopf algebras with these properties. Several such classification results were obtained in recent years, in particular in the series of papers by Masuoka (see, for example, \cite{Ma2},  \cite{Ma02},  \cite{Ma03},  \cite{Ma5}) and by Natale (see, for example \cite{Na1}, \cite{Na2}, \cite{Na3}, \cite{Na4},\cite{Na5}, \cite{Na6})  as well as in a recent preprint \cite{Kr} by Krop.

In this paper we continue the project of classifying semisimple Hopf algebras of dimension $2^m$ over an algebraically closed field of characteristic $%
0$  started in the papers \cite{Ka1} and \cite {Ka2}. In \cite{Ka1} we have shown that there are exactly $16$ nontrivial (that is, noncommutative and noncocommutative)
semisimple Hopf algebras of dimension $16$. In \cite{Ka2} we have classified all nontrivial  semisimple
 Hopf algebras of dimension $%
2^{n+1}$ for $n\geq 4$  with the group of group-like elements isomorphic to $\mathbb{Z}_{2^{n-1}}\times
\mathbb{Z}_{2}$.

We now classify, up to equivalence, all semisimple nontrivial
Hopf algebras of dimension $%
2^{2n+1}$ for $n\geq 2$ over an algebraically closed field of characteristic $%
0$ with the group of group-like elements isomorphic to $\mathbb{Z}_{2^{n}}\times
\mathbb{Z}_{2^{n}}$. We then give the complete list of nonisomorphic nontrivial Hopf algebras of dimension $32$ with the group of group-like elements isomorphic to $\mathbb{Z}_{4}\times
\mathbb{Z}_{4}$. We also show that these Hopf algebras are not twist-equivalent to each other. The above-mentioned Hopf algebras are all obtained as \textit{abelian extensions} of $kL%
$ by $k^G$ where $L\cong \mathbb{Z}_{2}$ and $G\cong \mathbb{Z}_{2^{n}}\times \mathbb{Z}%
_{2^{n}}$. Our ability to describe all such extensions heavily depends
on the structure of $G$, and in particular on the number of order $2$
automorphisms of $G$ and on the structure of the Schur multiplier $M(G)$.

The paper is organized as follows: In Section \ref{prelim} we gather the necessary preliminaries, in particular we discuss the cohomological description of abelian extensions based on works of Hofstetter, Masuoka, and Mastnak.

In Section \ref{2m} we consider  a group homomorphism $\Psi$ from the group of equivalence classes of abelian extensions of $kL$ by $k^G$, ${\rm H}^{2}\left( kL,k^{G}\right)$, to the Schur multiplier $M\left( G\right)$ (for a fixed action of $L\cong\mathbb{Z}_{2}$ on an arbitrary group $G$ of order  $2^{m-1}$). We show that its kernel is isomorphic to ${\rm H}^{2}\left( L,\hat G\right)  $ and that its image lies inside certain subgroup $ M\left( G\right)^{-}$. This implies that $\left| {\rm H}^{2}\left( kL,k^{G}\right)\right|\leq\left| {\rm H}^{2}\left( L,\hat G\right) \right| \cdot \left| M\left( G\right)^{-} \right|$. Moreover we conclude that there are at most $N\left(\left| \mathbf{G}\right|\cdot \left| M\left( \mathbf{G}\right)\right|/2 -1\right)$ nonisomorphic nontrivial Hopf algebras of dimension $2^{m}$ with group of group-like elements isomorphic to an abelian group $\mathbf{G}$ of order $2^{m-1}$, where $N$ is the number of distinct conjugacy classes of elements of order $2$ in ${\rm Aut}(\mathbf{G})$.

In Section \ref{main} we describe, up to equivalence, all possible non-commutative semisimple
Hopf algebras of dimension $2^{2n+1}$ with group of group-like elements isomorphic to $%
\mathbb{Z}_{2^{n}}\times \mathbb{Z}_{2^{n}}$ for $n\geq 2$. Every Hopf algebra under consideration fits into an abelian extension of $kL$ by $k^G$. In Theorem \ref{cohomology} we show that for a fixed action of $L$ on $G$
$$
{\rm H}^{2}\left( kL,k^{G}\right) \cong
  {\rm Ker}  \Psi \times {\rm Im} \Psi \cong {\rm H}^{2}\left( L,\hat G\right)\times M(G)^{-}.
 $$

In Section \ref{32} we describe all semisimple Hopf algebras of dimension $32$ with $\mathbf{G}\left(
H\right) \cong \mathbb{Z}_{4}\times \mathbb{Z}_{4}$ and therefore prove the following theorem:

\textsc{Theorem \ref{iso4x4}.}
{\it There are exactly $16$ nontrivial non-isomorphic semisimple
Hopf algebras of dimension $32$ with group of group-like elements isomorphic to $%
\mathbb{Z}_{4}\times \mathbb{Z}_{4}$. Moreover, all of them can be distinguished by categorical invariants, such as their Grothendieck ring structure or their (higher) Frobenius-Schur indicators. Therefore, the corresponding representation categories are also different and these Hopf algebras are not twist-equivalent to each other.}

The data required to distinguish non-isomorphic Hopf algebras is obtained in
Section \ref{32} and is gathered in Table 1. $N_{m,j}$ stands for the number of
 $2$-dimensional irreducible representations of $H$ with $m$-th Frobenius-Schur indicator equal to $j$. Note that by \cite[Theorem 3.1]{LM}, $N_{2,j}=0$ unless
$j=-1,0,1$. For non-abelian groups of order $16$ we use the notation from \cite[Section 4]{Ka1}:
\begin{eqnarray*}
 G_1&=&\left\langle a, b  |a^8=b^2=1, bab=a^5\right\rangle\\
G_6 &=&\left\langle a, b , c |a^4=b^2=c^2=1, bab=ac\right\rangle
\end{eqnarray*}
\newpage
\begin{center}
\textbf{Table 1}
\end{center}
\bigskip \noindent 
\begin{tabular}{|l|l|l|l|l|l|l|}
\hline
& $ G\cap Z\left( H\right)$ & $\mathbf{G}\left( H^{\ast }\right)$ & $%
N_{2,0}$ & $N_{2,1}$ & $N_{2,-1}$& Notes  \\ \hline
\begin{tabular}{l}
$H\left(\rightharpoonup_1, \alpha^2 _{1,2}\right)$  \\
$ H\left(\rightharpoonup_1, \alpha^2_{1,1}\alpha^2_{1,2}\alpha^2_{2,2}\right)$%
\end{tabular}
&
\begin{tabular}{l}
$\mathbb{Z}_{2} \times \mathbb{Z}_{2}$%
\end{tabular}
&
\begin{tabular}{l}
$\left(\mathbb{Z}_{2} \right)^3$%
\end{tabular}
& $0$ &
\begin{tabular}{l}
$4$ \\
$0$%
\end{tabular}
&
\begin{tabular}{l}
$2$ \\
$6$%
\end{tabular}
& \\ \hline
\begin{tabular}{l}
$H\left(\rightharpoonup_2,  \alpha _{1,2}\right)$\\
$H\left( \rightharpoonup_2,   \alpha ^2_{1,2}\right)$\\
  $H\left(\rightharpoonup_2,    \alpha^2_{1,2}\alpha^2_{2,2}\right)$\\
   $H\left(\rightharpoonup_2,  \beta _1 , \alpha _{1,2} \right)$\\
   $H\left(\rightharpoonup_2,  \beta _1 , \alpha _{1,2}\alpha ^{2}_{2,2} \right)$\\
    $H\left(\rightharpoonup_2,  \beta _1 , \alpha^2 _{1,2}\right)$
\end{tabular}
&
\begin{tabular}{l}
$\mathbb{Z}_{2} \times \mathbb{Z}_{4}$%
\end{tabular}
&
\begin{tabular}{l}
$G_6$\\
$\left(\mathbb{Z}_{2} \right)^2 \times \mathbb{Z}_{4}$\\
$\left(\mathbb{Z}_{2} \right)^2 \times \mathbb{Z}_{4}$\\
$G_1$\\
$G_1$\\
$\mathbb{Z}_{2} \times \mathbb{Z}_{8}$
\end{tabular}
& $2$ &
\begin{tabular}{l}
$1$ \\
$2$\\
$0$\\
$2$\\
$0$\\
$2$
\end{tabular}
&
\begin{tabular}{l}
$1$ \\
$0$\\
$2$\\
$0$\\
$2$\\
$0$
\end{tabular}
&\\ \hline
\begin{tabular}{l}
$H\left( \rightharpoonup_3,   \alpha ^2_{1,2}\right)$\\
$H\left(\rightharpoonup_3,  \beta _1 , \alpha^2 _{1,2}\right)$%
\end{tabular}
&
\begin{tabular}{l}
$\mathbb{Z}_{2} \times \mathbb{Z}_{4}$%
\end{tabular}
&
\begin{tabular}{l}
$\left(\mathbb{Z}_{2} \right)^2 \times \mathbb{Z}_{4}$ \\
$\mathbb{Z}_{4} \times \mathbb{Z}_{4}$%
\end{tabular}
& $4$ &
\begin{tabular}{l}
$0$
\end{tabular}
&
\begin{tabular}{l}
$0$
\end{tabular}
& \\ \hline
\begin{tabular}{l}
$H\left( \rightharpoonup_4,   \alpha ^2_{1,2}\right)$\\
  $H\left(\rightharpoonup_4,   \alpha^2 _{1,2} \alpha^2 _{2,2}\right)$
\end{tabular}
&
\begin{tabular}{l}
$\mathbb{Z}_{2} \times \mathbb{Z}_{2}$%
\end{tabular}
&
\begin{tabular}{l}
$\left(\mathbb{Z}_{2} \right)^{3} $
\end{tabular}
& $4$&
\begin{tabular}{l}
$2$ \\
$0$%
\end{tabular}
&
\begin{tabular}{l}
$0$ \\
$2$%
\end{tabular}
& \\ \hline
\begin{tabular}{l}
$H\left( \rightharpoonup_5,   \alpha^{\prime} _{1,2}\right)$\\
$H\left(\rightharpoonup_5,  \left(\alpha^{\prime} _{1,2} \right)^2\right)$
\end{tabular}
&
\begin{tabular}{l}
$\mathbb{Z}_{4}$%
\end{tabular}
&
\begin{tabular}{l}
$\left(\mathbb{Z}_{2} \right)^{3} $
\end{tabular}
& $6$ &
\begin{tabular}{l}
$0$%
\end{tabular}
&
\begin{tabular}{l}
$0$
\end{tabular}
& \begin{tabular}{l}
$N_{8,2} =2$ \\
$N_{8,2} =6$
\end{tabular}
\\ \hline
\begin{tabular}{l}
$ H\left( \rightharpoonup_6,   \alpha _{1,1}  \alpha _{1,2} \alpha _{2,2}\right)$\\
$H\left(\rightharpoonup_6, \alpha^2 _{1,1}  \alpha^2 _{1,2} \alpha^2 _{2,2}\right)$
\end{tabular}
&
\begin{tabular}{l}
$\mathbb{Z}_{2}\times\mathbb{Z}_{4}$%
\end{tabular}
&
\begin{tabular}{l}
$\mathbb{Z}_{2}\times \mathbb{Z}_{4}$%
\end{tabular}
& $4$ &
\begin{tabular}{l}
$2$%
\end{tabular}
&
\begin{tabular}{l}
$0$
\end{tabular}
& \begin{tabular}{l}
$N_{4,2} =0$ \\
$N_{4,2} =6$
\end{tabular}\\ \hline
\end{tabular}

\bigskip

\section{Preliminaries.}\label{prelim}
Throughout this paper we will assume that $k$ is an
algebraically closed field of characteristic $0$ and that $H$ is a semisimple
Hopf algebra over $k$  with
comultiplication $\Delta: H \longrightarrow H \ot H$, defined via $h \mapsto
\sum h_1 \ot h_2$, counit $\varepsilon : H \longrightarrow k$, and antipode $S$.
We let ${\mathbf G}(H)$ denote the group of group-like elements of $H$. For a general reference on Hopf algebras, see \cite{M}.

\begin{definition}[\protect{\cite[Definition 1.4]{Ma4}}]
Consider the sequence
\begin{equation}
K\overset{\iota}{\hookrightarrow }H\overset{\pi }{\twoheadrightarrow }F \label{ext}
\end{equation}
of finite-dimensional Hopf algebras and Hopf algebra maps $\iota$ and $\pi $ where $\iota$ is injective and $\pi $ is surjective. This sequence is called an \textit{extension} of $F$ by $K$ if $%
K=\left\{ h\in H|\sum h_{1}\otimes \pi \left( h_{2}\right) =h\otimes
1_{F}\right\} $, where $K\subset H$ via $i$. We say that two extensions $K \hookrightarrow H \twoheadrightarrow F$ and $K \hookrightarrow H' \twoheadrightarrow F$ are \textit{equivalent} if there is a Hopf algebra isomorphism $f:H\rightarrow H'$ which induces the identity maps on $K$ and $F$.
\end{definition}

For a general reference on Hopf algebra extensions, see \cite{H} and \cite{Ma4}.

\begin{definition}
Let $K$ and $F$ be Hopf algebras equipped with a weak action $%
\rightharpoonup :F\otimes K\rightarrow K$, a weak coaction $\rho
:F\rightarrow F\otimes K$, a cocycle $\sigma :F\otimes F\rightarrow K$ and a
dual cocycle $\tau :F\rightarrow K\otimes K$. The \textit{bicrossed}
\textit{product} $K\#_{\sigma }^{\tau }F$ is a Hopf algebra with $K\otimes
F$ as an underlying vector space and the following multiplication and
comultiplication:
\begin{eqnarray*}
\left( x\#a\right) \left( y\#b\right) =\sum x\left( a_{1}\rightharpoonup
y\right) \sigma \left( a_{2},b_{1}\right) \#a_{3}b_{2} \\
\Delta \left( x\#a\right) =\sum x_{1}\tau _{i}\left( a_{1}\right) \#\left(
a_{2}\right) ^{0}\otimes x_{2}\tau ^{i}\left( a_{1}\right) \left(
a_{2}\right) ^{1}\#a_{3}
\end{eqnarray*}
where
\begin{eqnarray*}
\rho \left( a\right) =\sum a^{0}\otimes a^{1} \\
\tau \left( a\right) =\sum_{i}\tau _{i}\left( a\right) \otimes \tau
^{i}\left( a\right)
\end{eqnarray*}
For the necessary conditions on $\rightharpoonup $, $\rho $, $\sigma $, and $%
\tau $ see \cite[3.1]{A}.
\end{definition}

For the rest of the paper we assume that $K$ is commutative and $F$ is
cocommutative. Then the extension (\ref{ext}) is called {\it abelian}; since $k$ is algebraically closed, it follows by
\cite[2.3.1]{M} that $K \cong (kG)^*$ and $F \cong kL$, for two finite
groups $G$ and $L$. Let $\{p_g|g\in G\}$ be the dual basis for $(kG)^*$.

By \cite[Proposition 1.5]{Ma4} any abelian extension of  $F$ by $K$ is equivalent to an extension
\begin{equation*}
K\overset{\iota}{\hookrightarrow }K\#_{\sigma }^{\tau }F\overset{\pi }{\twoheadrightarrow }F
\end{equation*}
where for $g\in G$, $a\in L$
\begin{eqnarray*}
\iota (p_g)=p_g\# 1\\
\pi (p_g\# a)=\delta _{1,g}a
\end{eqnarray*}

 As in \cite[page 891]{KMM} the action $\rightharpoonup$ of
$L$ on $K$ induces an action of $L$ on $K^* = kG$ via $(l\rightharpoonup f)(k) :=
f(Sl\rightharpoonup k)$. Since $K$ is commutative and $kL$ is cocommutative, $K$ is
a $kL$-module algebra, and thus $L$ acts as automorphisms of $K$. Thus $L$
permutes the orthogonal idempotents $\{p_g\}$; it follows that the action of
$L$ on $kG$ is in fact an action of $L$ on $G$ itself, which we also denote
by $\rightharpoonup$. Then the action of $L$ on $(kG)^*$ is given by
\begin{equation*}
x\rightharpoonup p_g = p_{x\rightharpoonup g}.
\end{equation*}

Let us now  fix the action $%
\rightharpoonup$ and coaction $\rho$. Equivalence classes of abelian extensions form the abelian group ${\rm Opext} (F,K)\cong {\rm H}^{2}\left( F,K\right)= {\rm Z}^{2}\left( F,K\right)/{\rm B}^{2}\left( F,K\right)$ (see \cite[Section 3]{H}),  
where ${\rm Z}^{2}\left( F,K\right)$ and ${\rm B}^{2}\left( F,K\right)$ are defined in \cite[page 268]{H}. Let us now assume that coaction $\rho$ is trivial. In this case more explicit description of ${\rm Z}^{2}\left( F,K\right)$ and ${\rm B}^{2}\left( F,K\right)$ can be found in \cite[Section 2]{Mas2}:

For  $a,b\in L, g,h\in G$ write
\begin{eqnarray*}
\sigma (a,b) &=&\sum_{g\in G}\sigma (g;a,b) p_{g}
\\
\tau (a) &=&\sum_{g,h\in G}\tau (g,h;a)p_{g}\otimes p_{h}
\end{eqnarray*}%
where $\sigma (g;a,b),\tau (g,h;a)\in k ^{\times }$, as in \cite[page 170]{Ma4}.  Then the pair $\left( \sigma ,\tau \right)$ lies in ${\rm Z}^{2}\left( F,K\right)$ if and only if for all $a,b,c\in L, g,h,k\in G$
\begin{eqnarray*}
\sigma \left(a^{-1}\rightharpoonup g;b,c\right)\sigma \left(g;a,bc\right) &=&\sigma \left(g;a,b\right) \sigma \left(g;ab,c\right)\\
\sigma \left(1;a,b\right) =\sigma \left(g;1,b\right) &=&\sigma \left(g;a,1\right) =1 \quad \text{normalization condition for }\sigma \\
\tau \left( g,h;a\right) \tau \left( gh,k;a\right)  &=&\tau \left(
g,hk;a\right) \tau \left( h,k;a\right) \\
\tau \left( g,h;1\right)  =\tau \left( 1,h;a\right) &=& \tau \left( g, 1;a\right)=1  \quad \text{normalization condition for }\tau\\
\tau \left( g,h;ab\right) \sigma \left(gh; a,b\right) &=&\tau \left(
g,h;a\right) \tau \left( a^{-1}\rightharpoonup g,a^{-1}\rightharpoonup
h;b\right) \sigma \left(g;a,b\right) \sigma \left( h;a,b\right)
\end{eqnarray*}
In addition, ${\rm B}^{2}\left( F,K\right)$ consists of pairs $\left( \sigma ,\tau \right)\in {\rm Z}^{2}\left( kL,k^{G}\right) $ such that there exists $\gamma
: L \to K ^{\times }$ defined via
$$
\gamma (a) = \sum_{g\in G}\gamma (g;a) p_{g} \quad \text{for } a\in L
$$
satisfying
\begin{eqnarray*}
\gamma \left( g;1\right) &=&\gamma \left( 1;g\right) =1\\
\sigma\left(g; a,b\right)  &=&\gamma \left( g;a\right) \gamma
\left( a^{-1}\rightharpoonup g;b\right) \gamma \left( g;ab\right) ^{-1}
\\
\tau \left( g,h;a\right)  &=&\gamma \left( g;a\right) ^{-1}\gamma
\left( h;a\right) ^{-1}\gamma \left( gh;a\right)
\end{eqnarray*}
for all $a,b\in L, g,h\in G$.

In \cite[Section 1]{Mas}, Mastnak introduced  ${\rm H}^{2}_c\left( F,K\right)$, the subgroup of  ${\rm H}^{2}\left( F,K\right)$, generated by the cocycles with trivial multiplication part. That is,
$$
{\rm H}_{c}^{2}\left( F,K\right) ={\rm Z}_{c}^{2}\left(F,K\right)
/{\rm B}_{c}^{2}\left( F,K\right)
$$ where
\begin{eqnarray*}
{\rm Z}_{c}^{2}\left( F,K\right) &=&\left\{ \tau \right | ({\rm triv}, \tau ) \in {\rm Z}^{2}\left( F,K\right)\}\\
{\rm B}_{c}^{2}\left( F,K\right) &=&{\rm B}^{2}\left( F,K\right)\cap {\rm Z}_{c}^{2}\left( F,K\right)
\end{eqnarray*}

Two sufficient conditions for $
{\rm H}_{c}^{2}\left( kL,k^G\right) =
{\rm H}^{2}\left( kL,k^G\right) $ were given in  \cite[Theorem 4.4]{Mas}. In particular, when $k$ is algebraically closed and $L$ is cyclic, $
{\rm H}_{c}^{2}\left( kL,k^G\right) =
{\rm H}^{2}\left( kL,k^G\right) $.

In order to distinguish between two non-isomorphic Hopf algebras we will use their invariants, in particular Frobenius-Schur indicators and Grothendieck rings.

In \cite{LM}, Frobenius-Schur indicators were extended from groups to Hopf algebras as follows:
\begin{definition}[\cite{LM}]
Let $\chi$ be a character of $H$. Define the $m$-th Frobenius-Schur indicator of $\chi$ via
$$
\nu_m\left( \chi\right):=\chi \left( \Lambda^{[m]}\right) =\sum \chi \left( \Lambda_1\cdots  \Lambda_m\right)
$$
where $\Lambda$ is a (unique) integral in $H$ such that $\varepsilon \left( \Lambda\right)=1$ (such a $\Lambda$ exists by Maschke's theorem).
\end{definition}
\begin{definition}[\cite{S}]
Let $K_{0}\left( H\right) ^{+}$ denote the abelian semigroup of all
equivalence classes of representations of $H$ with the addition given by a
direct sum. Then its enveloping group $K_{0}\left( H\right) $ has the
structure of an ordered ring with involution $^{\ast }$ and is called the
Grothendieck ring.
\end{definition}

The structure of $K_{0}\left( H\right) $ was described in \cite{NR}. The
multiplication in this ring is defined as follows: Let $\left[ \pi _{1}%
\right] $ and $\left[ \pi _{2}\right] $ denote the classes of
representations equivalent to $\pi _{1}$ and $\pi _{2}$, then $\left[ \pi
_{1}\right] \bullet \left[ \pi _{2}\right] $ is the class of the
representation $\left( \pi _{1}\otimes \pi _{2}\right) \circ \Delta $; the
unit of this ring is the class $\left[ \varepsilon \right] $ and $\left[ \pi %
\right] ^{\ast }$ is the equivalence class of the dual representation $%
^{t}\left( \pi \circ S\right) $ defined by $\left\langle ^{t}\left( \pi
\circ S\left( h\right) \right) \left( f\right) ,v\right\rangle =\left\langle
f,\left( \pi \circ S\left( h\right) \right) \left( v\right) \right\rangle $.
The equivalence classes of irreducible representations of $H$ form a basis
of $K_{0}\left( H\right) $. Moreover, $\left[ \pi _{1}\right] ^{\ast } = %
\left[ \pi _{2}\right] $ if and only if $\left[ \varepsilon
\right] $ is a summand in the basis decomposition of $\left[
\pi_{1}\right] \bullet \left[ \pi _{2}\right] $ (necessarily of
multiplicity $1$). For simplicity of notation we will write $\pi
$ instead of $\left[ \pi \right] $ for elements of $K_{0}\left(
H\right) $. In this paper, all irreducible representations of Hopf algebras under consideration will be $1$- or $2$-dimensional. We will not completely describe the structure of Grothendieck rings; it would be enough for our purposes to compute $\pi ^2$ for each $2$-dimensional irreducible representation $\pi$ of $H$.

\section{Hopf algebras of dimension $2^{m}$ with commutative Hopf subalgebra of dimension $2^{m-1}$.}\label{2m}

Let $H$ be a semisimple Hopf algebra of dimension $2^{m}$ over an algebraically closed field $k$ of characteristic $0$ and assume that it has a commutative Hopf subalgebra $K$ of dimension   $2^{m-1}$. Then, by the works of Masuoka (see \cite{Ma2}
and \cite{Ma5}) or by \cite[Proposition 2.4]{Ka2},
$H$ is equivalent to the bicrossed product $%
K\#_{\sigma }^{\tau }F$ of $K=k^G=\left( kG \right) ^{\ast }$ where $G $ is a group of order $2^{m-1}$ and $%
F=kL=k\left\langle t\right\rangle \cong k\mathbb{Z}_{2}$ with an action $%
\rightharpoonup :F\otimes K\rightarrow K$, trivial coaction, a cocycle $%
\sigma :F\otimes F\rightarrow K$ and a dual cocycle $\tau
:F\rightarrow K\otimes K$ (that is,  $\left( \sigma ,\tau \right) \in {\rm Z}^{2}\left( kL,k^{G}\right) $), and the action by $t$ being a
Hopf algebra automorphism of $K$.

Since $\sigma$ and $\tau$ are normalized they are completely determined by $\sigma (t,t)$ and $\tau (t)$. We can write $\sigma (t,t)$ and $\tau (t)$ in terms of
the basis for $(kG)^{\ast }$:
\begin{eqnarray}
\sigma (t,t) &=&\sum_{g\in G}\sigma _{t}(g)p_{g}  \label{cocyclecomp}
\\
\tau (t) &=&\sum_{g,h\in G}\tau _{t}(g,h)p_{g}\otimes p_{h}  \notag
\end{eqnarray}%
where $\sigma _{t}(g)=\sigma (g;t,t),\tau _{t}(g,h)=\tau (g,h;t)\in k$. Since $\left( \sigma ,\tau \right) \in {\rm Z}^{2}\left( kL,k^{G}\right) $,  $\sigma _{t}$ and $\tau _{t}$ satisfy
the following properties for any $ g,h\in G$:
\begin{eqnarray}
\sigma _{t}\left( 1\right)  &=&1  \label{sigma1} \\
\tau _{t}\left( 1,h\right)  &=&\tau _{t}\left(
g,1\right) =1   \\
\sigma _{t}\left( t\rightharpoonup g\right)  &=&\sigma _{t}\left(
g\right)   \label{sigma2} \\
\tau _{t}\left( g,h\right) \tau _{t}\left( gh,k\right)  &=&\tau _{t}\left(
g,hk\right) \tau _{t}\left( h,k\right)   \\
\sigma _{t}\left( gh\right)  &=&\tau _{t}\left( g,h\right) \tau _{t}\left(
t\rightharpoonup g,t\rightharpoonup h\right) \sigma _{t}\left(g\right)
\sigma _{t}\left( h\right)   \label{sigma3}
\end{eqnarray}

By the definition of ${\rm B}^{2}\left( kL,k^{G}\right)$, we get the following lemma (note that $\gamma$ from the lemma corresponds to $\gamma ( \_\ , t)$ from the definition of ${\rm B}^{2}\left( kL,k^{G}\right)$ in Section \ref{prelim}):
\begin{lemma}
Let \label{equiv2}$\gamma \in \left( k^{G}\right) ^{\times }$ and $\gamma
\left( 1\right) =1$. Then the extension $H=\left( kG\right) ^{\ast
}\#_{\sigma }^{\tau }kL$ is equivalent to the extension $H_{\gamma }=\left(
kG\right) ^{\ast }\#_{^{\gamma }\sigma }^{\tau ^{\gamma ^{-1}}}kL$, where
\begin{eqnarray*}
\left( ^{\gamma }\sigma \right) _{t}(g)=\gamma \left( g\right) \gamma
\left( t\rightharpoonup g\right) \sigma _{t}(g) \\
\left( \tau ^{\gamma ^{-1}}\right) _{t}\left( g,h\right) =\gamma
\left( gh\right) \left( \gamma \left( g\right) \gamma \left(
h\right) \right) ^{-1}\tau _{t}\left( g,h\right)
\end{eqnarray*}
\end{lemma}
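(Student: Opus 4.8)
The plan is to realize the passage from $\left(\sigma,\tau\right)$ to $\left(^{\gamma}\sigma,\tau^{\gamma^{-1}}\right)$ as multiplication by a coboundary and then appeal to the identification ${\rm Opext}\left(kL,k^{G}\right)\cong {\rm H}^{2}\left(kL,k^{G}\right)={\rm Z}^{2}/{\rm B}^{2}$ recalled in Section \ref{prelim} (following \cite{H}). Under this identification cohomologous cocycles represent equivalent extensions, so it is enough to exhibit a map $\gamma^{\prime}\colon L\to\left(k^{G}\right)^{\times}$ whose coboundary, in the explicit form of \cite{Mas2} recorded in Section \ref{prelim}, equals the ratio $\left(^{\gamma}\sigma/\sigma,\ \tau^{\gamma^{-1}}/\tau\right)$.

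First I would take $L=\langle t\rangle\cong\mathbb{Z}_{2}$ and define $\gamma^{\prime}$ by $\gamma^{\prime}\left(1\right)=\varepsilon$ and $\gamma^{\prime}\left(t\right)=\gamma$, so that in the component notation $\gamma^{\prime}\left(g;1\right)=1$ and $\gamma^{\prime}\left(g;t\right)=\gamma\left(g\right)$; this is the dictionary between the lemma's $\gamma$ and the datum $\gamma(\_\,;t)$ of Section \ref{prelim}. The two normalization requirements on the coboundary datum, $\gamma^{\prime}\left(g;1\right)=1$ and $\gamma^{\prime}\left(1;a\right)=1$, then hold: the former is forced by $\gamma^{\prime}\left(1\right)=\varepsilon$, and the latter follows from the standing hypothesis $\gamma\left(1\right)=1$.

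Next I would substitute $a=b=t$ into the two coboundary formulas, using $t^{-1}=t$ and $t^{2}=1$. For the cocycle part this gives
\[
\gamma^{\prime}\left(g;t\right)\,\gamma^{\prime}\left(t^{-1}\rightharpoonup g;t\right)\,\gamma^{\prime}\left(g;t^{2}\right)^{-1}=\gamma\left(g\right)\gamma\left(t\rightharpoonup g\right),
\]
because $\gamma^{\prime}\left(g;t^{2}\right)=\gamma^{\prime}\left(g;1\right)=1$; this is precisely the factor by which $\left(^{\gamma}\sigma\right)_{t}$ exceeds $\sigma_{t}$. For the dual-cocycle part,
\[
\gamma^{\prime}\left(g;t\right)^{-1}\,\gamma^{\prime}\left(h;t\right)^{-1}\,\gamma^{\prime}\left(gh;t\right)=\gamma\left(gh\right)\left(\gamma\left(g\right)\gamma\left(h\right)\right)^{-1},
\]
which is exactly the factor relating $\left(\tau^{\gamma^{-1}}\right)_{t}$ to $\tau_{t}$. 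Hence $\left(^{\gamma}\sigma,\tau^{\gamma^{-1}}\right)=\left(\sigma,\tau\right)\cdot\partial\gamma^{\prime}$ in ${\rm Z}^{2}\left(kL,k^{G}\right)$; in particular the modified pair is again a cocycle and is cohomologous to $\left(\sigma,\tau\right)$, so $H$ and $H_{\gamma}$ are equivalent extensions. Alternatively one could write down the gauge isomorphism $p_{g}\#a\mapsto\gamma\left(g;a\right)^{\pm1}p_{g}\#a$ directly and check it is a Hopf map inducing the identity on $K$ and $F$, but the cohomological route avoids that verification.

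I do not expect a genuine obstacle here, since the argument is bookkeeping once the above dictionary is in place. The only point that requires care is the matching of normalizations: one must check that the hypothesis $\gamma\left(1\right)=1$ is exactly what both legitimizes $\gamma^{\prime}$ as coboundary data and makes the term $\gamma^{\prime}\left(g;t^{2}\right)^{-1}$ disappear (via $t^{2}=1$ and $\gamma^{\prime}\left(g;1\right)=1$), so that no further cocycle-type constraint on $\gamma$ is needed.
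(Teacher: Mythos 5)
Your proposal is correct and follows essentially the same route as the paper: the paper obtains the lemma directly from the definition of ${\rm B}^{2}\left( kL,k^{G}\right)$, with the lemma's $\gamma$ playing the role of your $\gamma^{\prime}(\,\cdot\,;t)$, so that the ratio $\left(^{\gamma}\sigma/\sigma,\ \tau^{\gamma^{-1}}/\tau\right)$ is precisely a coboundary and the two cocycle pairs are cohomologous, hence define equivalent extensions via ${\rm Opext}(kL,k^{G})\cong {\rm Z}^{2}/{\rm B}^{2}$. Your explicit substitution $a=b=t$ and the normalization check using $\gamma(1)=1$ and $t^{2}=1$ simply spell out the bookkeeping the paper leaves implicit.
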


\begin{remark}\label{trivial}
Since $k$ is algebraically closed and $L$ is cyclic, by \cite[Chapter IV, Theorem 7.1]{Mac},
${\rm H}^{2}\left( L,\left( k^{G}\right) ^{\times }\right) =0$ and therefore $\sigma \in {\rm B}^{2}\left( L,\left( k^{G}\right) ^{\times }\right)$. Then $\left(\sigma \right) _{t}(g)=\gamma \left( g\right) \gamma
\left( t\rightharpoonup g\right)$ for some $\gamma \in \left( k^{G}\right) ^{\times }$ with $\gamma
\left( 1\right) =1$. Thus $\left( kG\right) ^{\ast }\#_{\sigma }^{\tau }kL$ is equivalent to
$\left( kG\right) ^{\ast }\#^{\tau ^{\gamma }}kL$. Therefore in this case Lemma \ref{equiv2} implies the result of \cite[Theorem 4.4 (1)]{Mas} that ${\rm H}^{2}\left( kL,k^{G}\right) =
{\rm H}^{2}_{c}\left( kL,k^{G}\right) $.
\end{remark}

\begin{definition} We will say that two actions of $L$ on $G$, $\rightharpoonup _1$ and $\rightharpoonup _2$, are {\it similar} if there is a group
automorphism $f:G\longrightarrow G$  such that  $f ( t\rightharpoonup_1 g) =t\rightharpoonup_2 f (  g) $ for all $g\in G$.
\end{definition}
\begin{remark}
Since $\left| L\right| =2$, the number of non-similar non-trivial actions of $L$ on $G$ equals the number of distinct conjugacy classes of elements of order $2$ in ${\rm Aut}(\mathbf{G})$.
\end{remark}
The following result is contained implicitly in the works by Masuoka (see, for example \cite{Ma2} or \cite{Ma03}). We give the proof for completeness.
\begin{lemma}
Let  $
F=k\left\langle t\right\rangle \cong k\mathbb{Z}_{2}$ and $K=k^G$ for a finite group $G $. Consider nontrivial Hopf algebras  $H_1=K\#_{\sigma_1 }^{\tau_1 }F$  and $H_2=K\#_{\sigma_2 }^{\tau_2 }F$. Assume that the coactions are trivial and the corresponding actions, $\rightharpoonup _1$ and $\rightharpoonup _2$, by $t$ are group automorphisms of $G$. Then
\begin{enumerate}
\item If $\rightharpoonup _1$ and $\rightharpoonup _2$ are similar with $f ( t\rightharpoonup_1 g) =t\rightharpoonup_2 f (  g) $ , $\left(\sigma _1\right) _{t}\left( g\right) = \left(\sigma_2\right) _{t}\left( f(g)\right)$, and $\left(\tau _1\right) _{t}\left( g,h\right) = \left(\tau _2\right) _{t}\left( f(g),f(h)\right)$ for all $g,h\in G$ then $H_1$ and $H_2$ are isomorphic.
\item If $G$ is abelian and $H_1$ and $H_2$ are isomorphic then $\rightharpoonup _1$ and $\rightharpoonup _2$ are similar.
\end{enumerate}
\end{lemma}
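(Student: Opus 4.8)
For part (1), the plan is to exhibit an explicit isomorphism and check it is a bialgebra map. Since $f$ is a group automorphism, $p_g\mapsto p_{f(g)}$ is a Hopf automorphism of $k^G$, and I extend it to $\Phi\colon H_1\to H_2$ by $\Phi(p_g\#t^i)=p_{f(g)}\#t^i$ for $i\in\{0,1\}$. First I would record that, because $F$ is cocommutative and $\sigma,\tau$ are normalized, the products and coproducts simplify in each $H_i$ to $(p_g\#a)(p_h\#b)=\delta_{g,\,a\rightharpoonup h}\,\sigma(g;a,b)\,p_g\#ab$ and $\Delta(p_g\#t)=\sum_{uv=g}\tau_t(u,v)(p_u\#t)\otimes(p_v\#t)$. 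Then the hypothesis $f(t\rightharpoonup_1 g)=t\rightharpoonup_2 f(g)$ matches up the Kronecker deltas, $(\sigma_1)_t(g)=(\sigma_2)_t(f(g))$ matches the remaining structure constants of the product (the cases with $a$ or $b$ equal to $1$ being forced by normalization), and $(\tau_1)_t(u,v)=(\tau_2)_t(f(u),f(v))$ together with the bijectivity of $f$ on the factorizations $uv=g$ matches the coproducts. Since $\Phi$ is a bijective bialgebra map between Hopf algebras it automatically commutes with the antipodes, so it is the desired isomorphism.

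For part (2), the key idea is to recover $K=k^G$ canonically from $H$ as the group algebra on its group-like elements. Because $G$ is abelian, $k^G$ is cocommutative and coincides with $k[\mathbf{G}(k^G)]$, where $\mathbf{G}(k^G)=\hat G$ and $|\hat G|=|G|=\dim k^G$. The main step is to prove $\mathbf{G}(H)=\hat G$. Writing a group-like $w=x\#1+y\#t$ and comparing the $\mathbb{Z}_2\times\mathbb{Z}_2$-graded components of $\Delta(w)$, which are diagonal, with those of $w\otimes w$ forces $x=0$ or $y=0$; and $w=\sum_g c_g p_g\#t$ is group-like only if $\tau_t(u,v)=c_uc_v c_{uv}^{-1}$ is a coboundary, in which case Lemma~\ref{equiv2} replaces $H$ by an equivalent extension with trivial $\tau$, which for $G$ abelian is cocommutative, contradicting nontriviality of $H$. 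Hence $\mathbf{G}(H)=\hat G$ and $K=k[\mathbf{G}(H)]$.

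Granting this, any Hopf isomorphism $\phi\colon H_1\to H_2$ sends group-likes to group-likes, so $\phi(K_1)=k[\phi(\mathbf{G}(H_1))]=k[\mathbf{G}(H_2)]=K_2$, and $\phi$ restricts to a Hopf automorphism of $k^G$, necessarily of the form $p_g\mapsto p_{f(g)}$ for a unique $f\in{\rm Aut}(G)$. To see that $f$ realizes the similarity, I would use that $K$ is normal and commutative, so it acts trivially on itself by the adjoint action and the adjoint action of $H$ on $K$ descends to an action of $\bar H=H/K^+H\cong kL$ by Hopf automorphisms; this descended action is exactly $\rightharpoonup$, i.e.\ a lift of $t$ conjugates $p_h$ to $p_{t\rightharpoonup h}$. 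Since ${\rm Aut}(\mathbb{Z}_2)$ is trivial, $\phi$ induces the identity on the quotient $kL$ and therefore carries a lift of $t$ in $H_1$ to a lift of $t$ in $H_2$; intertwining the adjoint actions of these lifts with $\phi$ then yields $p_{f(t\rightharpoonup_1 h)}=p_{t\rightharpoonup_2 f(h)}$, that is $f(t\rightharpoonup_1 h)=t\rightharpoonup_2 f(h)$, which is precisely the similarity of $\rightharpoonup_1$ and $\rightharpoonup_2$.

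The hard part is the canonical recovery of $K$ in part (2): the whole argument hinges on the claim $\mathbf{G}(H)=\hat G$, i.e.\ on ruling out group-likes in the $t$-component, which is exactly where nontriviality (non-cocommutativity) of $H$ is indispensable. Once $\phi(K_1)=K_2$ has been established, identifying $\rightharpoonup$ with the descended adjoint action and invoking triviality of ${\rm Aut}(\mathbb{Z}_2)$ is routine, and reading $f$ directly off $\phi|_{K_1}(p_g)=p_{f(g)}$ gives the correct direction of the intertwining with no ambiguity.
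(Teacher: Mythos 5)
Your proposal is correct, and its overall skeleton is the paper's: part (1) is the same explicit map $p_g\#t^i\mapsto p_{f(g)}\#t^i$ (the paper simply calls the verification straightforward, while you record the simplified product and coproduct formulas — which are the right ones — and match structure constants), and part (2) proceeds by recovering $K$ as $k\mathbf{G}(H)$, restricting $\phi$ to get $f\in{\rm Aut}(G)$, and intertwining the actions. Where you genuinely diverge is in how the two key steps of part (2) are executed. For $\mathbf{G}(H)=\hat G$, the paper argues by index: $k\mathbf{G}(H)$ is a Hopf subalgebra containing $K$ with $[H:K]=2$, so if it were larger it would be all of $H$, making $H$ a group algebra, hence cocommutative and trivial; you instead rule out group-likes in the $K\bar t$-component directly, showing such an element forces $\tau_t$ to be a coboundary and then using Lemma~\ref{equiv2} to replace $H$ by an isomorphic extension with trivial $\tau$, which is cocommutative when $G$ is abelian. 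Both work; the paper's is shorter, yours is more elementary and self-contained. For the intertwining, the paper applies $\mathcal{F}$ to $\bar t p_g=p_{t\rightharpoonup_1 g}\bar t$ and then asserts $\mathcal{F}(\bar t)\,p_{f^{-1}(g)}=p_{t\rightharpoonup_2 f^{-1}(g)}\,\mathcal{F}(\bar t)$ — a step that tacitly uses that $\mathcal{F}(\bar t)$ skew-commutes with $K$ according to $\rightharpoonup_2$ (justified, for instance, because $\mathcal{F}$ must carry $K\bar t$, the sum of all simple subcoalgebras of dimension greater than $1$, onto $K\bar t$). Your route — ${\rm ad}$ descends to an action of $H/K^+H\cong kL$ on the normal commutative $K$, this descended action is $\rightharpoonup$, and triviality of the Hopf automorphisms of $k\mathbb{Z}_2$ forces $\phi$ to match lifts of $t$ — is independent of any choice of lift and supplies exactly the justification the paper leaves implicit. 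The one point you should spell out is the identification of the descended action with $\rightharpoonup$: it suffices to check ${\rm ad}_{\bar t}(p_g)=p_{t\rightharpoonup g}$, which follows from $\bar t p_g=p_{t\rightharpoonup g}\bar t$ together with $\sum \bar t_1 S(\bar t_2)=\varepsilon(\bar t)1$; with that computation included, your proof is complete.
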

\begin{proof}
\begin{enumerate}
\item Define
\begin{eqnarray*}
\mathcal{F} :
k^G\#_{\sigma_1 }^{\tau_1 }kL &\longrightarrow &
k^G\#_{\sigma_2 }^{\tau _2}kL \qquad \text{  by}\\
\mathcal{F}\left( p_{g}\#t \right)
&=&
p_{f(g)}\# t
\end{eqnarray*}
It is straightforward to check that $\mathcal{F}$ is a Hopf algebra isomorphism.
\item
Since $G$ is abelian, $K=k^G=k\hat G$ and therefore $\hat G \subseteq \mathbf{G}\left( H_1\right)=\mathbf{G}\left( H_2\right)$. Since $[H_1:K]=[H_2:K]=\dim F =2$ and $H_1$ and $H_2$ are nontrivial, $\hat G = \mathbf{G}\left( H_1\right)=\mathbf{G}\left( H_2\right)$ and $K= k\widehat{\mathbf{G}\left( H_1\right) }=k\widehat{\mathbf{G}\left( H_2\right) }$. Group-like elements are sent to group-like elements by Hopf algebra maps and therefore $K$ is preserved under any Hopf isomorphism ${\mathcal F}: H\longrightarrow H$. Then ${\mathcal F}$ restricts to an automorphism ${\mathcal F}_{k^G}: k^G\longrightarrow k^G$ and induces a group  automorphism $f:G\longrightarrow G$ via ${\mathcal F} (p_g) = p_{f^{-1}(g)}$.  Denote $1 \# t= \bar t$ and $p_g \# t= p_g\bar t$. Then
\begin{eqnarray*}
{\mathcal F}\left( \bar t p_g \right)&=&{\mathcal F}\left( p_{t\rightharpoonup_1 g} \bar t\right) ={\mathcal F}\left( p_{t\rightharpoonup_1 g}\right){\mathcal F}\left(  \bar t\right) = p_{f^{-1}(t\rightharpoonup_1 g)}{\mathcal F}\left(  \bar t\right)\\
{\mathcal F}\left( \bar t  \right){\mathcal F}\left(  p_g \right) &=& {\mathcal F}\left( \bar t  \right)p_{f^{-1}( g)}=p_{t\rightharpoonup_2 f^{-1}( g)}{\mathcal F}\left(  \bar t\right)
\end{eqnarray*}
Thus $p_{f^{-1}(t\rightharpoonup_1 g)}=p_{t\rightharpoonup_2 f^{-1}( g)}$ and therefore $f^{-1}(t\rightharpoonup_1 g)=t\rightharpoonup_2 f^{-1}( g)$.
\end{enumerate}
\end{proof}

Let us now fix an action of $L$ on $G$. By \cite[Theorem 4.4 (1)]{Mas} (or Remark \ref{trivial}), every extension is equivalent to an extension with a trivial cocycle. We will identify the equivalence class of $\left( kG\right) ^{\ast }\#^{\tau }kL$ with $\left[ \left( {\rm triv}, \tau \right) \right]=\left[
\tau \right]\in {\rm H}^{2}_{c}\left( kL,k^{G}\right)$, where $\tau \in {\rm Z}_{c}^{2}\left( kL,k^{G}\right) $ satisfies
\begin{eqnarray}
\tau _{t}\left( 1,h\right) &=&\tau _{t}\left(
g,1\right) =1  \label{coc1} \\
\tau _{t}\left( g,h\right) \tau _{t}\left( gh,k\right)  &=&\tau _{t}\left(
g,hk\right) \tau _{t}\left( h,k\right)   \label{coc2} \\
1 &=&\tau _{t}\left( g,h\right) \tau _{t}\left( t\rightharpoonup
g,t\rightharpoonup h\right)   \label{fix}
\end{eqnarray}
Thus our extension is completely
 determined
by the cocycle $\tau _{t} \in {\rm Z}^{2}\left( G,k^{\times }\right) $ which satisfies property (\ref{fix}).

$L$ acts on ${\rm Z}^{2}\left( G,k^{\times }\right) $ via
$$(t\rightharpoonup \alpha )\left(
g,h\right) =\alpha \left( t\rightharpoonup g,t\rightharpoonup
h\right)
$$
This action induces an action of $L$ on
${\rm H}_{c}^{2}\left( kL,k^{G}\right)$.

Consider the Schur multiplier
$$
M\left( G\right) ={\rm H}^{2}\left( G,k^{\times }\right).
$$
Then $L$ acts on $M\left( G\right) $ via
$$
t\rightharpoonup [\alpha ]
=[ t\rightharpoonup \alpha ]
$$
Consider the subgroup  $M\left( G\right)^{-}$ of $M\left( G\right)$ defined by
$$
M\left( G\right)^{-} =\left\{ [\alpha ] \in M\left(
G\right) |t\rightharpoonup [\alpha ]= [\alpha ]^{-1}\right\}=\left\{ [\alpha ] \in M\left(
G\right) |[\alpha ]\left( t\rightharpoonup [\alpha ]\right)= 1\right\}.
$$
\begin{theorem}\label{psi}
Let $G $ be a group of order $2^{m-1}$ and $%
L=\left\langle t\right\rangle \cong \mathbb{Z}_{2}$ as in the beginning of this section. Consider the map
\begin{eqnarray*}
\Psi :{\rm H}^{2}\left( kL,k^{G}\right)={\rm H}_{c}^{2}\left( kL,k^{G}\right) &\longrightarrow & M\left( G\right)
 \\
\Psi \left( \left[ \left( {\rm triv}, \tau \right) \right] \right) =\Psi \left( \left[ \tau \right] \right) &=&\left[ \tau _{t}\right]
\end{eqnarray*}
Then
\begin{enumerate}
\item  $\Psi$ is a well-defined group homomorphism.
\item ${\rm Im}  \Psi \subseteq M\left( G\right)^{-}$.
\item  ${\rm Ker}  \Psi\cong \hat G ^L/ N_t\left(\hat G\right)\cong {\rm H}^{2}\left( L,\hat G\right)  $.
\end{enumerate}
\end{theorem}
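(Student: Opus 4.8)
The plan is to handle the three parts in increasing order of difficulty, with part (3) carrying essentially all of the work. For part (1), note first that by (\ref{coc1}) and (\ref{coc2}) the function $\tau_t$ is a normalized $2$-cocycle in ${\rm Z}^{2}(G,k^{\times})$, so $[\tau_t]\in M(G)$ is defined. To see that $\Psi$ is well-defined I would check that the $\tau$-part of a coboundary in ${\rm B}^{2}_c(kL,k^{G})$ is an ordinary $G$-coboundary: writing such a coboundary via some $\gamma_t\in(k^{G})^{\times}$ as in the description of ${\rm B}^{2}$ and in Lemma \ref{equiv2}, its $\tau$-part is $(g,h)\mapsto \gamma_t(g)^{-1}\gamma_t(h)^{-1}\gamma_t(gh)=(\partial\gamma_t)(g,h)\in {\rm B}^{2}(G,k^{\times})$, where $\partial$ denotes the differential on $k^{\times}$-valued cochains of $G$, so $\partial\mu(g,h)=\mu(g)^{-1}\mu(h)^{-1}\mu(gh)$ and $\ker\partial=\hat G=\mathrm{Hom}(G,k^{\times})$. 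Thus changing the representative of $[\tau]$ alters $\tau_t$ only by a $G$-coboundary and leaves $[\tau_t]$ fixed. Since the group law on ${\rm H}^{2}_c(kL,k^{G})$ is induced by the componentwise multiplication of cocycles in ${\rm Z}^{2}(kL,k^{G})$, restricting to trivial multiplication part multiplies the $\tau$-parts pointwise, whence $\Psi$ is a homomorphism. For part (2), property (\ref{fix}) reads $\tau_t(g,h)\,\tau_t(t\rightharpoonup g,t\rightharpoonup h)=1$, i.e.\ $\tau_t\cdot(t\rightharpoonup\tau_t)=1$ as functions; passing to $M(G)$ gives $[\tau_t]\,(t\rightharpoonup[\tau_t])=1$, so $[\tau_t]\in M(G)^{-}$.

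Part (3) is the heart of the argument. Given $[\tau]\in{\rm Ker}\,\Psi$ the cocycle $\tau_t$ is a $G$-coboundary, so I write $\tau_t=\partial\mu$ for some $\mu\colon G\to k^{\times}$ with $\mu(1)=1$, noting that $\mu$ is determined only up to multiplication by an element of $\ker\partial=\hat G$. Using that $t\rightharpoonup$ is a group automorphism one gets $t\rightharpoonup\partial\mu=\partial(t\rightharpoonup\mu)$, so applying (\ref{fix}) yields $\partial\bigl(\mu\cdot(t\rightharpoonup\mu)\bigr)=\partial\mu\cdot(t\rightharpoonup\partial\mu)=1$; hence $\chi:=\mu\cdot(t\rightharpoonup\mu)$ lies in $\ker\partial=\hat G$, and since $t^{2}=1$ one checks $t\rightharpoonup\chi=\chi$, so in fact $\chi\in\hat G^{L}$. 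I define $\Phi([\tau]):=[\chi]\in\hat G^{L}/N_t(\hat G)$, where $N_t(\lambda):=\lambda\cdot(t\rightharpoonup\lambda)$. This is well-defined: replacing $\mu$ by $\mu\lambda$ with $\lambda\in\hat G$ replaces $\chi$ by $\chi\,N_t(\lambda)$; and replacing the representative $\tau$ by one differing by a ${\rm B}^{2}_c$-coboundary $\partial\nu$ replaces $\mu$ by $\mu\nu$ and hence $\chi$ by $\chi\,N_t(\nu)$, where the trivial-$\sigma$ constraint forces $\gamma_t(g)\gamma_t(t\rightharpoonup g)=1$, i.e.\ $N_t(\nu)=1$ for $\nu=\gamma_t$. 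Both changes are trivial in the quotient, and $\Phi$ is a homomorphism by pointwise multiplication.

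Finally I would prove $\Phi$ is a bijection. For injectivity, if $\chi=N_t(\lambda)$ with $\lambda\in\hat G$, then replacing $\mu$ by $\mu\lambda^{-1}$ (which does not change $\partial\mu=\tau_t$, as $\lambda\in\ker\partial$) achieves $N_t(\mu)=1$; now $\tau_t=\partial\mu$ with $N_t(\mu)=1$ is exactly the $\tau$-part of a ${\rm B}^{2}_c$-coboundary, so $[\tau]=0$. For surjectivity, given $\chi\in\hat G^{L}$ I must solve $\mu\cdot(t\rightharpoonup\mu)=\chi$ with $\mu(1)=1$: on each two-element orbit $\{g,\,t\rightharpoonup g\}$ set $\mu(g)=1$ and $\mu(t\rightharpoonup g)=\chi(g)$, which is consistent because $\chi(t\rightharpoonup g)=\chi(g)$ by $\chi\in\hat G^{L}$; on each fixed point $g$ set $\mu(g)$ to be a square root of $\chi(g)$, available since $k$ is algebraically closed (and $\mu(1)=1$). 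Then $\tau:=\partial\mu$ satisfies (\ref{coc1})--(\ref{fix}) and represents a class in ${\rm Ker}\,\Psi$ with $\Phi([\tau])=[\chi]$. This gives ${\rm Ker}\,\Psi\cong\hat G^{L}/N_t(\hat G)$, and the remaining isomorphism $\hat G^{L}/N_t(\hat G)\cong {\rm H}^{2}(L,\hat G)$ is the standard periodicity computation of the cohomology of the cyclic group $L\cong\mathbb{Z}_{2}$.

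The main obstacle is the bookkeeping in part (3): pinning down the correct target quotient $\hat G^{L}/N_t(\hat G)$ requires tracking \emph{simultaneously} the indeterminacy of $\mu$ modulo $\hat G$ and the indeterminacy of $\tau$ modulo ${\rm B}^{2}_c$ (the latter constrained by $N_t(\nu)=1$), and the surjectivity step crucially uses algebraic closedness to extract square roots at the fixed points of the $L$-action on $G$.
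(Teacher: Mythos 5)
Your proof is correct, and parts (1) and (2) coincide with the paper's argument; the difference is in part (3), where you proceed in the opposite direction from the paper. You construct the map $\Phi\colon {\rm Ker}\,\Psi \to \hat G^{L}/N_t\bigl(\hat G\bigr)$, $[\tau]\mapsto[\mu\,(t\rightharpoonup\mu)]$ where $\tau_t=\partial\mu$, staying entirely among representatives with trivial multiplication part, and you prove bijectivity by hand --- in particular surjectivity by explicitly solving the norm equation $\mu\,(t\rightharpoonup\mu)=\chi$ orbit-by-orbit, taking square roots at fixed points (which is where algebraic closedness enters for you). The paper instead constructs the surjection $\phi\colon\hat G^{L}\to{\rm Ker}\,\Psi$, $\beta\mapsto[(\sigma^{\beta},{\rm triv})]$: it converts a kernel class $[({\rm triv},\tau)]$ with $\tau_t$ a coboundary into the cohomologous pair $(\sigma,{\rm triv})$ with $\sigma_t=\gamma\,(t\rightharpoonup\gamma)$ via Lemma \ref{equiv2}, checks via (\ref{sigma1})--(\ref{sigma3}) that such $\sigma_t$ lies in $\hat G^{L}$, uses the vanishing ${\rm H}^{2}\left( L,\left( k^{G}\right)^{\times}\right)=0$ of Remark \ref{trivial} to see that $\phi$ lands in ${\rm Ker}\,\Psi$, and then computes $\ker\phi=N_t\bigl(\hat G\bigr)$. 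The two maps are mutually inverse, so the underlying identity is the same ($\tau_t=\partial\mu$ corresponds to the norm $N_t(\mu)\in\hat G^{L}$); the trade-offs differ. Your route is more self-contained: you never invoke ${\rm H}^{2}\left( L,\left( k^{G}\right)^{\times}\right)=0$ nor mixed representatives $(\sigma,{\rm triv})$, effectively inlining the one special case of that vanishing you need (every element of $\hat G^{L}$ is a norm). The paper's route buys the explicit generators $[(\sigma^{\beta},{\rm triv})]$ of ${\rm Ker}\,\Psi$, which is exactly the form in which the kernel is used later (Proposition \ref{12_21} and the case-by-case computations of Sections \ref{main} and \ref{32}), so its extra machinery pays off downstream.
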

\begin{proof}
Consider $\tau \in {\rm Z}_{c}^{2}\left( kL,k^{G}\right) $. Then, by $\left( \ref{coc1}\right) $ and $%
\left( \ref{coc2}\right) $,  $\tau _{t}\in
{\rm Z}^{2}\left( G,k^{\times }\right) $. If $\tau \in {\rm B}_{c}^{2}\left( kL,k^{G}\right) $ then $\tau _{t}\left(
g,h\right) =\gamma _{t}\left( g\right) ^{-1}\gamma _{t}\left( h\right)
^{-1}\gamma _{t}\left( gh\right) $ and therefore $\tau _{t}\in {\rm B}^{2}\left(
G,k^{\times }\right) $. Thus $\Psi$ is  well-defined.

$\Psi $ is a homomorphism since $\left( \tau \tau ^{\prime
}\right) _{t}=\tau _{t}\tau _{t}^{\prime }$.

If $\left[ \tau \right] \in {\rm H}_{c}^{2}\left( kL,k^{G}\right) $ then $%
\tau _{t}\left( g,h\right) \tau _{t}\left( t\rightharpoonup
g,t\rightharpoonup h\right) =1$ by $\left( \ref{fix}\right) $ and therefore
\begin{equation*}
\left[ \tau _{t}\right]\left(t\rightharpoonup \left[ \tau _{t}\right]\right) =\left[ t\rightharpoonup \tau _{t}%
\right] =1
\end{equation*}

Assume now that $\left[ \left( {\rm triv}, \tau \right) \right]=\left[
\tau \right]\in {\rm Ker} \Psi$. Then $\tau _{t}\in {\rm B}^{2}\left(
G,k^{\times }\right) $ and therefore $\tau _{t}\left( g,h\right)
=\gamma \left( g\right) \gamma \left( h\right) \gamma \left(
gh\right) ^{-1}$ for some $\gamma \in \left( k^{G}\right) ^{\times }$ with  $\gamma (1) =1$. Then $\left[ \left( {\rm triv},\tau
\right) \right] =\left[ \left( \sigma ,{\rm triv}\right) \right]
\in {\rm H}^{2}\left( kL,k^{G}\right)$ where $\sigma =\! ^{\gamma}{\rm triv}$, that is,
$$\sigma _{t}\left( g\right) =\gamma \left(
g\right) \gamma \left( t\rightharpoonup g\right).$$
 Formulas (\ref{sigma1}), (\ref{sigma2}), and (\ref{sigma3}) imply that $\sigma$ satisfies the following properties
\begin{eqnarray*}
\sigma _{t}\left( 1\right)  &=&1\\
\sigma _{t}\left( t\rightharpoonup g\right)
&=&\sigma _{t}\left( g\right) \\
\sigma _{t}\left( gh\right)  &=&\sigma _{t}\left(
g\right) \sigma _{t}\left( h\right)
\end{eqnarray*}
Therefore $\sigma _t \in \hat G ^L$ the subgroup of $\hat G$, fixed by $L$. Then
$\sigma = \sigma^{\beta}$ for some $\beta \in \hat G ^L$, where
$\left( \sigma^{\beta}\right) _{t}= \beta $. Define a map
\begin{eqnarray*}
\phi: \hat G ^L&\longrightarrow &{\rm Ker} \Psi \qquad \text{via }
\phi(\beta) = \left[ \left( \sigma^{\beta} ,{\rm
triv}\right) \right]
\end{eqnarray*}
Since $\beta \in \hat G ^L$, $\left( \sigma^{\beta} ,{\rm
triv}\right) $ satisfies properties (\ref{sigma1})--(\ref{sigma3}) and therefore $\left( \sigma^{\beta} ,{\rm
triv}\right) \in  Z^{2}\left( kL,k^{G}\right) $. Since
${\rm H}^{2}\left( L,\left( k^{G}\right) ^{\times }\right) =0$,  $\left(\sigma^{\beta} \right) _{t}(g)=\gamma \left( g\right) \gamma
\left( t\rightharpoonup g\right)$ for some $\gamma \in \left( k^{G}\right) ^{\times }$ with $\gamma
\left( 1\right) =1$. Therefore  $\left[ \left( \sigma^{\beta} ,{\rm triv}\right) \right] = \left[ \left( {\rm triv}, {\rm triv}^{\gamma}\right) \right]$ and
$$\Psi \left( \left[ \left( \sigma^{\beta} ,{\rm triv}\right) \right] \right) =\Psi \left( \left[ \left( {\rm triv}, {\rm triv}^{\gamma} \right) \right] \right) =\left[ {\rm triv}^{\gamma}  \right]=1$$
Thus  $\phi$ is well-defined. Since $\sigma^{\beta_1 \beta_2}=\sigma^{\beta_1}\sigma^{\beta_2}$, $\phi$ is a homomorphism.

Since we have already shown that for every $\left[ \left( {\rm triv}, \tau \right) \right]\in {\rm Ker} \Psi$ there exists $\beta \in \hat G ^L$ such that $\left[ \left( {\rm triv}, \tau \right) \right] =
 \left[ \left( \sigma^{\beta}
 ,{\rm triv}\right) \right] $, $\phi$ is surjective. Moreover
\begin{eqnarray*}
 {\rm Ker}\phi  &=&\{\beta \in  \hat G ^L | \beta = \gamma
 \left( t\rightharpoonup \gamma \right) \text{ and } \gamma \left( g\right) \gamma \left( h\right) \gamma \left(
gh\right) ^{-1} =1\}\\
 &=& \{ \gamma
 \left( t\rightharpoonup \gamma \right) | \gamma \in \hat G\}=N_t\left(\hat G\right).
 \end{eqnarray*}

Thus ${\rm Ker}  \Psi\cong \hat G ^L/ N_t\left(\hat G\right)\cong {\rm H}^{2}\left( L,\hat G\right)  $.
\end{proof}
\begin{remark}
Since $t^2=1$, the map $\Psi$ is the coconnecting homomorphism defined in \cite[Section 6]{Mas}.
\end{remark}

\begin{corollary}
Let $G $ be a group of order $2^{m-1}$ and $%
L$ be a group of order $2$. For any given action $\rightharpoonup $ of $L$ on $G$, $\left| {\rm H}^{2}\left( kL,k^{G}\right)\right|\leq\left| {\rm H}^{2}\left( L,\hat G\right) \right| \cdot \left| M\left( G\right)^{-} \right|$.
\end{corollary}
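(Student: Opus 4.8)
The plan is to deduce this directly from \thmref{psi} together with the first isomorphism theorem for finite abelian groups. Since $\Psi:{\rm H}^{2}\left( kL,k^{G}\right)\longrightarrow M\left( G\right)$ is a group homomorphism by part (1), the first isomorphism theorem yields
$$
{\rm H}^{2}\left( kL,k^{G}\right)/{\rm Ker}\,\Psi \cong {\rm Im}\,\Psi.
$$
Because all the groups involved are finite (indeed $\left| {\rm H}^{2}\left( kL,k^{G}\right)\right|$ divides a power of $2$, being the group of equivalence classes of abelian extensions of the finite-dimensional Hopf algebra $kL$ by $k^{G}$), this isomorphism gives the exact count
$$
\left| {\rm H}^{2}\left( kL,k^{G}\right)\right| = \left| {\rm Ker}\,\Psi\right| \cdot \left| {\rm Im}\,\Psi\right|.
$$

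Next I would substitute the two structural identifications from \thmref{psi}. Part (3) supplies ${\rm Ker}\,\Psi\cong {\rm H}^{2}\left( L,\hat G\right)$, so that $\left| {\rm Ker}\,\Psi\right| = \left| {\rm H}^{2}\left( L,\hat G\right)\right|$. Part (2) gives the inclusion ${\rm Im}\,\Psi\subseteq M\left( G\right)^{-}$, whence $\left| {\rm Im}\,\Psi\right| \leq \left| M\left( G\right)^{-}\right|$. Combining the displayed equality with these two facts produces
$$
\left| {\rm H}^{2}\left( kL,k^{G}\right)\right| = \left| {\rm H}^{2}\left( L,\hat G\right)\right| \cdot \left| {\rm Im}\,\Psi\right| \leq \left| {\rm H}^{2}\left( L,\hat G\right)\right| \cdot \left| M\left( G\right)^{-}\right|,
$$
which is exactly the claimed bound.

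There is no genuine obstacle here: the corollary is purely a bookkeeping consequence of \thmref{psi}, and the only point requiring a word of care is the finiteness of ${\rm H}^{2}\left( kL,k^{G}\right)$, needed to pass from the isomorphism ${\rm H}^{2}\left( kL,k^{G}\right)/{\rm Ker}\,\Psi\cong{\rm Im}\,\Psi$ to a statement about cardinalities. This finiteness is immediate in the present setting, so the inequality follows at once. If one wished, equality would hold precisely when ${\rm Im}\,\Psi = M\left( G\right)^{-}$, i.e.\ when every element of $M\left( G\right)^{-}$ is realized as $\left[\tau_{t}\right]$ for some cocycle $\tau\in{\rm Z}_{c}^{2}\left( kL,k^{G}\right)$, but for the stated upper bound this refinement is not needed.
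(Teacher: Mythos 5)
Your proof is correct and is exactly the argument the paper intends: the corollary is stated without proof precisely because it follows immediately from Theorem~\ref{psi} via $\left| {\rm H}^{2}\left( kL,k^{G}\right)\right| = \left|{\rm Ker}\,\Psi\right|\cdot\left|{\rm Im}\,\Psi\right|$, part (3) identifying the kernel, and part (2) bounding the image. Your added remark on finiteness is harmless (and indeed finiteness already follows from the theorem itself, since ${\rm Ker}\,\Psi\cong{\rm H}^{2}\left( L,\hat G\right)$ and ${\rm Im}\,\Psi\subseteq M\left( G\right)$ are both finite), so there is nothing to correct.
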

\begin{corollary}
Let $\mathbf{G}$ be an abelian group of order $2^{m-1}$ and let $N$ be the number of distinct conjugacy classes of elements of order $2$ in ${\rm Aut}(\mathbf{G})$. Then there are at most $N\left(\left| \mathbf{G}\right|\cdot \left| M\left( \mathbf{G}\right)\right|/2 -1\right)$ nonisomorphic Hopf algebras of dimension $2^{m}$ with group of group-like elements isomorphic to $\mathbf{G}$.
\begin{proof}
Every non-trivial Hopf algebra under consideration is equivalent to $H=k^G\#^{\tau }kL$ where $G =\widehat{\mathbf{G}}$ and $\left| L\right| =2$. Therefore the action of $L$ on $G$ is nontrivial and corresponds to an automorphism of $G$ of order $2$. Moreover, $\hat G ^L$ is a proper subgroup of $\hat G $ and therefore
$$
\left| {\rm H}^{2}\left( L,\hat G\right) \right| \leq \left| \hat G ^L \right|\leq \left| \hat G  \right|/2 = \left| \mathbf{G}\right|/2
$$
\end{proof}
\end{corollary}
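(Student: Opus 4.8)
The plan is to assemble the bound from the two structural facts already in place: the homomorphism $\Psi$ of \thmref{psi} together with its kernel and image, and the lemma comparing similar actions with Hopf algebra isomorphism. First I would invoke the structure theorem (Masuoka, or \cite[Proposition 2.4]{Ka2}) to write every nontrivial semisimple $H$ with $\mathbf{G}(H)\cong\mathbf{G}$ as a bicrossed product $k^G\#^{\tau}kL$ with $G=\widehat{\mathbf{G}}$, trivial cocycle $\sigma$, and $|L|=2$. Here I would record that the action $\rightharpoonup$ must be nontrivial: when $L=\mathbb{Z}_{2}$ a trivial action makes the multiplication of $k^G\#^{\tau}kL$ commutative, so $H$ would be trivial. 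Thus every nontrivial $H$ arises from an order-$2$ automorphism of $G$.

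Next I would organize the count by similarity classes of actions. By part (2) of the lemma on similar actions, two such Hopf algebras with non-similar actions cannot be isomorphic, while part (1) shows that replacing an action by a similar one and transporting $\tau$ through the witnessing automorphism $f$ produces an isomorphic Hopf algebra. Hence every nontrivial $H$ is isomorphic to one built from a fixed representative action in its similarity class, and distinct classes contribute disjoint families of isomorphism classes. By the Remark following the definition of similarity, together with ${\rm Aut}(\widehat{\mathbf{G}})\cong{\rm Aut}(\mathbf{G})$, the number of similarity classes of nontrivial actions is exactly $N$.

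For one fixed action I would bound the number of isomorphism classes by the number of nontrivial equivalence classes of extensions, namely $|{\rm H}^{2}(kL,k^{G})|-1$: the trivial class yields the cocommutative (hence trivial) Hopf algebra, and since Hopf isomorphism is coarser than extension equivalence this overcounts in the safe direction. Combining the preceding corollary, $|{\rm H}^{2}(kL,k^{G})|\le|{\rm H}^{2}(L,\hat G)|\cdot|M(G)^{-}|$, with the displayed bound $|{\rm H}^{2}(L,\hat G)|\le|\mathbf{G}|/2$ and with $|M(G)^{-}|\le|M(G)|=|M(\mathbf{G})|$ --- the last equality because $G=\widehat{\mathbf{G}}\cong\mathbf{G}$ and the Schur multiplier is an isomorphism invariant --- gives $|{\rm H}^{2}(kL,k^{G})|-1\le|\mathbf{G}|\cdot|M(\mathbf{G})|/2-1$ for each action. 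Summing over the $N$ similarity classes yields the claimed bound $N\left(|\mathbf{G}|\cdot|M(\mathbf{G})|/2-1\right)$.

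The point requiring care is the bookkeeping among the three equivalence relations in play: extension equivalence (classified by ${\rm H}^{2}(kL,k^{G})$), Hopf algebra isomorphism, and similarity of actions. The argument must guarantee that no nontrivial $H$ is missed, which is precisely what part (1) of the lemma provides by allowing normalization to a representative action, while it may overcount freely, since passing from equivalence to isomorphism and restricting $M(G)^{-}$ inside $M(G)$ can only shrink the true number. I expect the only real work to be this combinatorial organization; the quantitative content is already contained in \thmref{psi} and the preceding corollary.
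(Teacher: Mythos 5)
Your proposal is correct and takes essentially the same approach as the paper: the paper's own proof merely records the key inequality $\left| {\rm H}^{2}\left( L,\hat G\right) \right| \leq \left| \hat G ^L \right| \leq \left| \hat G \right|/2 = \left| \mathbf{G}\right|/2$ (which holds because ${\rm H}^{2}(L,\hat G)$ is a quotient of $\hat G^{L}$ by \thmref{psi}(3) and $\hat G^{L}$ is a proper subgroup since the action is forced to be nontrivial), leaving implicit the bookkeeping over the $N$ similarity classes, the appeal to the preceding corollary, and the exclusion of the trivial (cocommutative) class, all of which you spell out. The one step you should make explicit rather than cite as ``the displayed bound'' is precisely this inequality; it follows immediately from the kernel description in \thmref{psi} together with the nontriviality of the action.
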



\section{Hopf algebras of dimension $2^{2n+1}$ with $\mathbf{G}\left(
H\right) =\mathbb{Z}_{2^{n}}\times \mathbb{Z}_{2^{n}}$}

\label{main} In this section we describe, up to equivalence, all possible non-commutative semisimple
Hopf algebras of dimension $2^{2n+1}$ with group of group-like elements isomorphic to $%
\mathbb{Z}_{2^{n}}\times \mathbb{Z}_{2^{n}}$ for $n\geq 2$.

Let $H$ be a nontrivial semisimple Hopf algebra of dimension $2^{2n+1}$ with $%
\mathbf{G}\left( H\right) \cong \mathbb{Z}_{2^{n}}\times \mathbb{Z}_{2^{n}}$.
Then, as in Section \ref{2m},
$H$ is equivalent to the bicrossed product $%
K\#_{\sigma }^{\tau }F$ of $K=k\mathbf{G}\left( H\right)=\left( kG \right) ^{\ast }$ where $G =\widehat{\mathbf{G}\left( H\right) }\cong \mathbb{Z}_{2^{2n}}\times \mathbb{Z}_{2^{2n}}$ and $%
F=kL=k\left\langle t\right\rangle \cong k\mathbb{Z}_{2}$ with an action $%
\rightharpoonup :F\otimes K\rightarrow K$ by Hopf algebra automorphisms, trivial coaction, a cocycle $%
\sigma :F\otimes F\rightarrow K$, and a dual cocycle $\tau
:F\rightarrow K\otimes K$.

Let $G=\left\langle x\right\rangle \times \left\langle
y\right\rangle \cong\mathbb{Z}_{2^{n}}\times \mathbb{Z}_{2^{n}}$. Then $\hat G = \mathbf{G}\left( H\right) =\left\langle \beta _1 \right\rangle \times \left\langle \beta _2  \right\rangle \cong \mathbb{Z}_{2^{n}}\times \mathbb{Z}_{2^{n}}$ where
  $$
    \beta_{1}\left( x^{a}y^{b}\right) =\xi^{ a}\quad \beta_{2}\left( x^{a}y^{b}\right) =\xi^{ b}
    $$
and $\xi$ is a primitive $2^n$-th root of unity.

Define
$$
\alpha _{p,q}\left( x^{a_{1}}y^{a_{2}},x^{b_{1}}y^{b_{2}}\right) =\xi
^{a_{p}b_{q}}
$$
Then $M\left( G\right) = \left\{ \left[ \alpha^k _{1,2}\right] |k\in\mathbb{Z}_{2^n}
\right\}   = \left\langle \left[ \alpha _{1,2}\right]
\right\rangle\cong \mathbb{Z}_{2^n}$ (see \cite[Corollary 2.2.12]{Kar}).

Define $\tau^k _{p,q}$ via
$$
\left(\tau^k _{p,q}\right) _t = \alpha^k _{p,q}.
$$
Then $\tau^k _{p,q}$ is a dual cocycle.

\begin{proposition}\label{12_21}
\begin{enumerate}
\item In $M\left( G\right)$
\begin{eqnarray*}
\left[ \alpha _{2,1}\right] &=&\left[ \alpha ^{-1}_{1,2}\right]\\
\left[ \alpha _{j,j}\right] &=&1
\end{eqnarray*}
where $j=1,2$.
\item If  $t\rightharpoonup x = xy^{r}$ and
$t\rightharpoonup y = y^s$ then in $
 {\rm H}^{2}\left( kL,k^{G}\right)$, $[(\sigma^{\beta_1} , {\rm triv})] =  [( {\rm triv}, \tau '  )] $  where $\tau '_t =\alpha '$ for $\alpha '$ defined via
$$
\alpha ' \left( x^{a_{1}}y^{a_{2}},x^{b_{1}}y^{b_{2}}\right) =\left( -1\right)
^{k}
$$
with $k= 0$ if $0\leq a_1+b_1 < 2^{n}$ and $k = 1$ if $2^{n}\leq a_1+b_1 < 2^{n+1}$.
\item If  $t\rightharpoonup x = x^{q}y^{2^{n-1}r}$ and
$t\rightharpoonup y = y^{-1}$ then  in $
 {\rm H}^{2}\left( kL,k^{G}\right)$,
$$
[(\sigma^{\beta^{2^{n-1}} _2} , {\rm triv})] =  [( {\rm triv}, \tau ^{2^{n-1}}_{2,2} )].
$$

\item If  $t\rightharpoonup x = x^{-1}$ and
$t\rightharpoonup y = y^{s}$ then  in $
 {\rm H}^{2}\left( kL,k^{G}\right)$,
$$
[(\sigma^{\beta^{2^{n-1}} _1} , {\rm triv})] =  [( {\rm triv}, \tau ^{2^{n-1}}_{1,1} )].
$$
\end{enumerate}
\end{proposition}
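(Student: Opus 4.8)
The proposition has four parts, and each asks to establish an identity inside either the Schur multiplier $M(G)$ or the cohomology group ${\rm H}^2(kL,k^G)$. The unifying tool is the computation, introduced in the proof of Theorem~\ref{psi}, that whenever $\left[(\sigma^\beta,{\rm triv})\right]\in{\rm Ker}\,\Psi$, one has $\sigma_t(g)=\gamma(g)\gamma(t\rightharpoonup g)$ for a suitable $\gamma\in(k^G)^\times$, and then $\left[(\sigma^\beta,{\rm triv})\right]=\left[({\rm triv},{\rm triv}^\gamma)\right]$ with $({\rm triv}^\gamma)_t(g,h)=\gamma(g)^{-1}\gamma(h)^{-1}\gamma(gh)$. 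So the plan for parts (2)--(4) is always the same: start from the given $\beta\in\hat G$, exhibit an explicit $\gamma$ solving $\gamma(g)\gamma(t\rightharpoonup g)=\beta(g)$, and then read off the resulting coboundary-type cocycle $\tau'_t(g,h)=\gamma(g)^{-1}\gamma(h)^{-1}\gamma(gh)$, identifying it with the claimed $\alpha'$ or $\alpha^{2^{n-1}}_{p,q}$.

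\textbf{Part (1).}
First I would dispatch the two Schur-multiplier identities, which are the easiest. Using the biadditivity of $\alpha_{p,q}$ in the exponents, the product $\alpha_{1,2}\cdot\alpha_{2,1}$ evaluated on $(x^{a_1}y^{a_2},x^{b_1}y^{b_2})$ gives $\xi^{a_1b_2+a_2b_1}$, which is the symmetric bilinear form; symmetric $2$-cocycles are coboundaries, so $[\alpha_{1,2}][\alpha_{2,1}]=1$, i.e.\ $[\alpha_{2,1}]=[\alpha_{1,2}^{-1}]$. Similarly $\alpha_{j,j}(g,h)=\xi^{a_jb_j}$ is itself symmetric, hence a coboundary, giving $[\alpha_{j,j}]=1$. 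Concretely, the coboundary trivializing a symmetric form $\xi^{a_jb_j}$ is $\gamma(x^{a_1}y^{a_2})=\xi^{\binom{a_j}{2}}$, and checking $\gamma(g)\gamma(h)\gamma(gh)^{-1}=\xi^{a_jb_j}$ (modulo the $2^n$-periodicity) is the one routine verification here.

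\textbf{Parts (2)--(4): the coboundary computations.}
For part (2), with $t\rightharpoonup x=xy^r$, $t\rightharpoonup y=y^s$, I would solve $\gamma(g)\gamma(t\rightharpoonup g)=\beta_1(g)=\xi^{a_1}$; since the $x$-exponent is fixed by the action, the natural choice is $\gamma(x^{a_1}y^{a_2})=\xi^{a_1/2}$, interpreted as a genuine root of unity via $a_1\in\{0,\dots,2^n-1\}$, so that the coboundary $\gamma(g)^{-1}\gamma(h)^{-1}\gamma(gh)$ measures exactly the ``carry'' when $a_1+b_1$ wraps around $2^n$—this is where the sign $(-1)^k$ with $k$ detecting $a_1+b_1\geq 2^n$ comes from, matching $\alpha'$. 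For parts (3) and (4), the action inverts one generator ($t\rightharpoonup y=y^{-1}$, resp.\ $t\rightharpoonup x=x^{-1}$), which makes $\beta^{2^{n-1}}$ in that variable anti-fixed, and the analogous half-exponent choice of $\gamma$ produces a coboundary equal to $\alpha^{2^{n-1}}_{2,2}$ (resp.\ $\alpha^{2^{n-1}}_{1,1}$); one must check that the non-inverted variable's contribution either cancels or lands in $2^n\mathbb{Z}$.

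\textbf{The main obstacle.}
The genuine difficulty is bookkeeping the branch/representative of the square root of the root of unity: writing $\gamma(g)=\xi^{a_1/2}$ only makes sense after fixing representatives $a_1\in\{0,\dots,2^n-1\}$, and the entire content of $\alpha'$ (the indicator of whether $a_1+b_1<2^n$) is precisely the failure of $a_1\mapsto a_1/2$ to be a homomorphism across the wrap-around. I expect the bulk of the work—and the place errors creep in—to be verifying that the explicit $\gamma$ satisfies $\gamma(g)\gamma(t\rightharpoonup g)=\beta(g)$ on the nose (using $t\rightharpoonup x=xy^r$ so the $x$-exponent is preserved) and then that $\gamma(g)^{-1}\gamma(h)^{-1}\gamma(gh)$ reproduces the stated formula for $\alpha'$ and for $\alpha^{2^{n-1}}_{p,q}$, carefully tracking when the half-integer exponents recombine modulo $2^n$ versus $2^{n+1}$.
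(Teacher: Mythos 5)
Your overall strategy is exactly the paper's: part (1) by coboundary considerations, and parts (2)--(4) by producing an explicit $\gamma\in\left( k^{G}\right)^{\times}$ with $\gamma(g)\gamma(t\rightharpoonup g)=\beta(g)$ and then reading off $[(\sigma^{\beta},{\rm triv})]=[({\rm triv},{\rm triv}^{\gamma})]$ as in the proof of Theorem \ref{psi} and Lemma \ref{equiv2}. Part (2) of your plan is correct and coincides with the paper's computation: since this action fixes the exponent $a_1$, the linear choice $\gamma\left( x^{a_{1}}y^{a_{2}}\right)=\omega^{a_1}$ (with $\omega^2=\xi$ and representatives $a_1\in\{0,\dots,2^n-1\}$) satisfies the constraint on the nose, and its coboundary is precisely the carry sign $(-1)^k=\alpha'$. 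Your part (1), via ``symmetric cocycles are coboundaries'' (divisibility of $k^{\times}$, so ${\rm Ext}(G,k^{\times})=0$), is a legitimate and more conceptual route than the paper's explicit coboundaries; note only that your concrete witness $\gamma\left( x^{a_{1}}y^{a_{2}}\right)=\xi^{\binom{a_j}{2}}$ does not verify literally (its coboundary is $\xi^{-a_jb_j}$ times the carry sign $(-1)^k$), but existence is all you need there.

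The genuine gap is in parts (3) and (4). When the action \emph{inverts} the relevant exponent, no ``analogous half-exponent'' (i.e.\ linear-in-the-exponent) $\gamma$ can work, for a structural reason: if $\gamma$ is linear in $a_2$, say $\gamma\left( x^{a_{1}}y^{a_{2}}\right)=i^{a_2}$ with representatives, then since the $y$-exponent of $t\rightharpoonup g$ is $2^{n-1}ra_1-a_2$, the $a_2$-dependence cancels in the product: $\gamma(g)\gamma(t\rightharpoonup g)=i^{2^{n-1}ra_1+2^n\epsilon}=i^{2^{n-1}ra_1}$, which is independent of $a_2$ and so can never equal $\beta_2^{2^{n-1}}(g)=(-1)^{a_2}$; moreover the coboundary of such a linear $\gamma$ is only a carry sign, never $\alpha^{2^{n-1}}_{2,2}(g,h)=(-1)^{a_2b_2}$. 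This is the opposite of part (2), where the fixed exponent made the two factors reinforce rather than cancel. The missing idea --- which is the actual content of the paper's proof --- is to take $\gamma$ \emph{quadratic} in the inverted exponent: $\gamma\left( x^{a_{1}}y^{a_{2}}\right)=i^{a_2^{2}}$ (well defined on $G$ since $(a_2+2^n)^2\equiv a_2^2 \pmod 4$). Because squares are invariant under negation up to cross-terms divisible by $4$, one gets $\gamma(t\rightharpoonup g)=i^{(2^{n-1}ra_1-a_2)^2}=i^{a_2^2}$, hence
\begin{equation*}
\gamma(g)\gamma(t\rightharpoonup g)=i^{2a_2^2}=(-1)^{a_2}=\beta_2^{2^{n-1}}(g),
\qquad
\gamma(gh)\gamma(g)^{-1}\gamma(h)^{-1}=i^{(a_2+b_2)^2-a_2^2-b_2^2}=(-1)^{a_2b_2},
\end{equation*}
which is exactly $\alpha^{2^{n-1}}_{2,2}$; part (4) is identical with $a_1$ in place of $a_2$. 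Without this quadratic (Gauss-type) choice, your outline for parts (3)--(4) cannot be completed.
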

\begin{proof}
Let $j\in\{1,2\}$ and let $w$ be a primitive $2^{n+1}$-th root of $1$, such that $\omega^2 =-\xi$.
Define $\mu, \gamma _j, \eta
\in \left( k^{G}\right) ^{\times }$  via
\begin{eqnarray*}
\mu \left( x^{a_{1}}y^{a_{2}}\right)
&=&\xi^{a_{1}a_{2}}\\
\gamma _j
\left( x^{a_{1}}y^{a_{2}}\right) &=&\omega^{a_{j}^{2}}\\
\eta \left( x^{a_{1}}y^{a_{2}}\right) &=&\omega^{a_{1}}
\end{eqnarray*}
Then
\begin{enumerate}
\item
\begin{eqnarray*}
\alpha _{2,1}\left( x^{a_{1}}y^{a_{2}},x^{b_{1}}y^{b_{2}}\right)
\mu \left( x^{a_{1}}y^{a_{2}}\right) \mu \left( x^{b_{1}}y^{b_{2}}\right) \mu \left( x^{a_{1}}y^{a_{2}}x^{b_{1}}y^{b_{2}}\right)
^{-1} &=&\xi ^{a_{2}b_{1}}\xi^{a_{1}a_{2}}\xi^{b_{1}b_{2}}\xi^{-(a_{1}+b_{1})(a_{2}+b_{2})}\\
&=&\xi
^{-a_{1}b_{2}}=\alpha ^{-1}_{1,2}\left( x^{a_{1}}y^{a_{2}},x^{b_{1}}y^{b_{2}}\right)\\
\gamma _j\left( x^{a_{1}}y^{a_{2}}\right)
\gamma _j \left( x^{b_{1}}y^{b_{2}}\right)\gamma _j ^{-1}\left( x^{a_{1}}y^{a_{2}}x^{b_{1}}y^{b_{2}}\right)
&=&\omega^{a_{j}^{2}}\omega^{b_{j}^{2}}\omega^{-\left( a_{j}+b_{j} -2^n k\right) ^{2}}\\
&=&\xi
^{a_{j}b_{j}}=\alpha _{j,j}\left( x^{a_{1}}y^{a_{2}},x^{b_{1}}y^{b_{2}}\right)
\end{eqnarray*}
where $k= 0$ if $0\leq a_j+b_j <2^n$ and $k = 1$ if $2^n\leq a_j+b_j <2^{n+1}$.

\item Assume now that $t\rightharpoonup x = xy^{r}$ and
$t\rightharpoonup y = y^s$. Then $\beta_1 \in \hat G ^L$ and $[(\sigma^{\beta_1} , {\rm triv})] \in
 {\rm H}^{2}\left( kL,k^{G}\right)$. We will show that $[(\sigma^{\beta_1} , {\rm triv})] =  [( {\rm triv}, \tau '  )] $  where $\tau '_t =\alpha '$ with the notation $\sigma =\sigma^{\beta _1}$ and  $\tau= {\rm triv}$:
\begin{eqnarray*}
\left( ^{\eta }\sigma \right) _{t}\left( x^{a_{1}}y^{a_{2}}\right)
&=&\eta \left( x^{a_{1}}y^{a_{2}}\right) \eta \left( t\rightharpoonup
x^{a_{1}}y^{a_{2}}\right) \sigma _{t}\left( x^{a_{1}}y^{a_{2}}\right)\\
&=&\eta \left( x^{a_{1}}y^{a_{2}}\right) \eta \left(
x^{a_{1}}y^{ra_1+sa_{2}}\right)\beta _1\left( x^{a_{1}}y^{a_{2}}\right)\\
&=&\omega^{a_{1}}\omega^{a_{1}} \xi ^{a_{1}}=1 \\
\left( \tau ^{\eta ^{-1}}\right) _{t}\left( x^{a_{1}}y^{a_{2}},x^{b_{1}}y^{b_{2}}\right)  &=&\eta \left( x^{a_{1}}y^{a_{2}}x^{b_{1}}y^{b_{2}}\right) \eta ^{-1}\left( x^{a_{1}}y^{a_{2}}\right)
\eta ^{-1}\left( x^{b_{1}}y^{b_{2}}\right)  \\
&=& \omega^{a_{1}+b_1 -2^n k}\omega^{-a_{1}}\omega^{-b_{1}}=\left( -1\right)
^{k} \\
&=& \alpha ' \left( x^{a_{1}}y^{a_{2}},x^{b_{1}}y^{b_{2}}\right)
\end{eqnarray*}
where $k= 0$ if $0\leq a_1+b_1 < 2^{n}$ and $k = 1$ if $2^{n}\leq a_1+b_1 < 2^{n+1}$.

\item Assume now that  $t\rightharpoonup x = x^{q}y^{2^{n-1}r}$ and
$t\rightharpoonup y = y^{-1}$. Then $\beta^{2^{n-1}} _2 \in \hat G ^L$ and $[(\sigma^{\beta^{2^{n-1}} _2} , {\rm triv})] \in
 {\rm H}^{2}\left( kL,k^{G}\right)$. We will show that $[(\sigma^{\beta^{2^{n-1}} _2} , {\rm triv})] =  [( {\rm triv}, \tau ^{2^{n-1}}_{2,2} )] $.  Denote $\sigma =\sigma^{\beta^{2^{n-1}} _2}$, $\tau= {\rm triv}$ and $\gamma =\gamma^{2^{n-1}} _2$. Then
\begin{eqnarray*}
\left( ^{\gamma }\sigma \right) _{t}\left( x^{a_{1}}y^{a_{2}}\right)
&=&\gamma \left( x^{a_{1}}y^{a_{2}}\right) \gamma \left( t\rightharpoonup
x^{a_{1}}y^{a_{2}}\right) \sigma _{t}\left( x^{a_{1}}y^{a_{2}}\right)\\
&=&\gamma \left( x^{a_{1}}y^{a_{2}}\right) \gamma \left( x^{qa_{1}}y^{2^{n-1}ra_{1}-a_{2}}\right)
\beta^{2^{n-1}} _2\left( x^{a_{1}}y^{a_{2}}\right)\\
&=&i^{a_{2}^{2}}i^{(2^{n-1}ra_{1}-a_{2})^{2}}\left( -1\right) ^{a_{2}}=1 \\
\left( \tau ^{\gamma ^{-1}}\right) _{t}\left( x^{a_{1}}y^{a_{2}},x^{b_{1}}y^{b_{2}}\right)
&=&\gamma\left( x^{a_{1}}y^{a_{2}}x^{b_{1}}y^{b_{2}}\right) \gamma^{-1}\left(
x^{a_{1}}y^{a_{2}}\right)
\gamma ^{-1}\left( x^{b_{1}}y^{b_{2}}\right)  \\
&=&i^{\left( a_{2}+b_{2}\right) ^{2}-a_{2}^{2}-b_{2}^{2}}=\left( -1\right)^{a_{2}b_{2}}
\end{eqnarray*}
\item Part 4 is proved similarly to  part 3, using $\gamma =\gamma^{2^{n-1}} _1$.
\end{enumerate}

\end{proof}

Since $H$ is noncommutative, the action $\rightharpoonup $ is not trivial, and $t$ acts on $G$ as a group automorphism of order $2$. Let
\begin{eqnarray*}
t\rightharpoonup x &=& x^a y^b\\
t\rightharpoonup y &=& x^c y^{d}
\end{eqnarray*}
This action corresponds to  $A=\left[
\begin{array}{ll}
a & c \\
b & d%
\end{array}%
\right] \in GL_2 \left(\mathbb{Z}_{2^n}\right)$ such that $A\neq I$ and $A^2=I$.  Two actions are similar if and only if the corresponding matrices are similar.

\begin{theorem}\label{action}
For $n\geq 3$, there are $16$ non-similar actions of $t$  on $G$, corresponding to the matrices in (\ref{0101})--(\ref{100-1+2n-1}). Only 6 of these actions are non-similar when $n=2$.
\end{theorem}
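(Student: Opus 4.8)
The problem reduces to classifying, up to conjugacy in $GL_2(\mathbb{Z}_{2^n})$, the involutions $A\neq I$ satisfying $A^2=I$, since by the remark preceding the theorem two actions are similar precisely when the corresponding matrices are conjugate. The plan is to translate $A^2=I$ into the defining equations on the entries $a,b,c,d$ and then to stratify the solution set by the order-$2$ behaviour on each cyclic factor. From $A^2=I$ one reads off
\begin{eqnarray*}
a^2+bc &=& 1\\
d^2+bc &=& 1\\
b(a+d) &=& 0\\
c(a+d) &=& 0
\end{eqnarray*}
in $\mathbb{Z}_{2^n}$. The first two equations force $a^2\equiv d^2$, and since $\det A=\pm 1$ for an involution (indeed $\det A=ad-bc=\pm1$ is forced by $A^2=I$ once one checks $(\det A)^2=1$ in $\mathbb{Z}_{2^n}$), I would separate the analysis according to $\det A=1$ versus $\det A=-1$, and within each case according to whether $a+d$ is a unit or a zero-divisor.

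First I would dispose of the case $\det A=-1$: here $\operatorname{tr} A = a+d$ need not vanish, and one shows every such involution is conjugate to a diagonal matrix $\operatorname{diag}(1,-1)$ after accounting for the $2$-adic refinements; the extra non-similar classes arise from the $2$-torsion subtleties of $\mathbb{Z}_{2^n}$ (the elements $\pm1+2^{n-1}$, $\pm(1+2^{n-1})$ all square to $1$), which is exactly why the count depends on $n$. Next, for $\det A=1$ with $a+d$ a unit, equations $b(a+d)=c(a+d)=0$ force $b=c=0$, hence $a^2=d^2=1$; the involutive units of $\mathbb{Z}_{2^n}$ are $\{1,-1,1+2^{n-1},-1+2^{n-1}\}$ (a Klein four-group for $n\geq 3$, but only $\{1,-1\}=\{1,3\}$ for $n=2$), and I would enumerate the resulting diagonal matrices up to simultaneous conjugation and swapping of the two basis vectors. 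The remaining and most delicate stratum is $\det A=1$ with $a+d$ a zero-divisor, i.e. $a+d\in 2^j\mathbb{Z}_{2^n}^{\times}$ for some $1\le j\le n$; here $b,c$ are constrained only modulo the complementary power of $2$, and the conjugacy invariants are the precise $2$-adic valuations of $b$, $c$, and of $a-d$, together with the off-diagonal type. This is where the list (\ref{0101})--(\ref{100-1+2n-1}) of sixteen representatives comes from, and it is the step I expect to consume the bulk of the verification.

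The main obstacle is genuinely this last step: over a field one would invoke rational canonical form, but $\mathbb{Z}_{2^n}$ is not a PID-with-invertible-$2$, so conjugacy classes are finer than over $\mathbb{Q}$, and an involution can fail to be diagonalizable exactly because $2$ is not invertible. The plan is therefore to compute, for each candidate normal form, a complete set of conjugation-invariants --- the valuations $v_2(b),v_2(c),v_2(a-d)$ and the residue of $A$ modulo $2^{n-1}$ --- and to show (i) these invariants are preserved under conjugation by $GL_2(\mathbb{Z}_{2^n})$, and (ii) any two involutions with matching invariants are actually conjugate, by exhibiting an explicit conjugating matrix. Carrying out (ii) is the computational heart and is where the dichotomy $n\ge 3$ versus $n=2$ manifests: when $n=2$ the two ``ghost'' involutions $\pm1+2^{n-1}=3,\,-1$ collapse ($3\equiv -1$), so several of the sixteen classes merge and only $6$ survive, which I would verify by directly matching the $n=2$ specializations of the representatives and checking the coincidences among them.
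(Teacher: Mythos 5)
Your proposal breaks at its first structural step: the parenthetical claim that $A^2=I$ forces $\det A=\pm 1$ because $(\det A)^2=1$. In $\mathbb{Z}_{2^n}$ with $n\geq 3$ the equation $x^2=1$ has \emph{four} roots, namely $\pm 1$ and $\pm 1+2^{n-1}$ --- exactly the ``ghost'' square roots of unity that you yourself invoke when listing the admissible diagonal entries --- and involutions whose determinant equals $\pm 1+2^{n-1}$ genuinely occur. In fact six of the sixteen representatives in the paper's list have such determinants: $\mathrm{diag}(1,1+2^{n-1})$, $\mathrm{diag}(-1,-1+2^{n-1})$, $\mathrm{diag}(1,-1+2^{n-1})$, $\mathrm{diag}(1+2^{n-1},-1)$, and the two matrices $\left(\begin{smallmatrix}1&2^{n-1}\\2^{n-1}&1+2^{n-1}\end{smallmatrix}\right)$, $\left(\begin{smallmatrix}-1&2^{n-1}\\2^{n-1}&-1+2^{n-1}\end{smallmatrix}\right)$ (recall that $2\cdot 2^{n-1}=0$ and $2^{2n-2}=0$ in $\mathbb{Z}_{2^n}$, so for instance $(-1)(-1+2^{n-1})=1+2^{n-1}$). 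Your primary case division into $\det A=1$ versus $\det A=-1$ is therefore not exhaustive: it silently discards these six classes, and no amount of work in the remaining strata can recover the count of $16$.

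Two of your supporting claims also fail. The quantities you propose as ``a complete set of conjugation-invariants'' --- $v_2(b)$, $v_2(c)$, $v_2(a-d)$, and the residue of $A$ modulo $2^{n-1}$ --- are not preserved by conjugation, so step (i) of your plan is unprovable: for example, $\left(\begin{smallmatrix}2&-3\\1&-2\end{smallmatrix}\right)$ is an involution with even diagonal and odd off-diagonal entries, hence by the paper's Case 1 it is conjugate to the swap matrix (\ref{0101}), yet it has $v_2(a-d)=2$ while the swap matrix has $a-d=0$, and their residues modulo $2^{n-1}$ differ as well; similarly, conjugating $\left(\begin{smallmatrix}1&0\\2^{n-1}&1\end{smallmatrix}\right)$ by the swap matrix exchanges $v_2(b)$ and $v_2(c)$. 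Moreover, your assertion that every involution with $\det A=-1$ is conjugate to a diagonal matrix is false: any diagonal involution has odd (unit) diagonal entries and thus reduces to the identity modulo $2$, whereas the swap matrix (which has determinant $-1$) does not, and reduction modulo $2$ respects conjugacy; this non-diagonalizable class is precisely why (\ref{0101}) heads the paper's list. The paper sidesteps both pitfalls by stratifying on the common parity of $a$ and $d$ (forced by $a^2+bc=d^2+bc=1$) and, in the odd case, on the trace modulo $4$, and then exhibits an explicit conjugating matrix in each stratum instead of hunting for a complete set of invariants.
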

\begin{proof}
Consider an action which corresponds to  $A=\left[
\begin{array}{ll}
a & c \\
b & d%
\end{array}%
\right] \in GL_2 \left(\mathbb{Z}_{2^n}\right)$. Since $A^2=I$,
$$
\begin{array}{ll}
a^2 + bc =1 & (a+d)c=0 \\
d^2 + bc =1 & (a+d)b=0%
\end{array}
$$
Therefore $a$ and $d$ have the same parity and $(a+d)(a-d)=0$.
\begin{enumerate}
\item Assume that $a$ and $d$ are both even. Then $b$ and $c$ are both odd and $d=-a$. The matrix $\left[
\begin{array}{ll}
1 & 0 \\
a & c
\end{array}%
\right]$ is invertible and
$$
\left[
\begin{array}{ll}
1 & 0 \\
a & c
\end{array}%
\right]
\left[
\begin{array}{ll}
a& c \\
b & -a
\end{array}%
\right] = \left[
\begin{array}{ll}
0& 1 \\
1 & 0
\end{array}%
\right]\left[
\begin{array}{ll}
1 & 0 \\
a & c
\end{array}%
\right]
$$
Thus all matrices of this type lie in one conjugacy class with representative
\begin{equation}
\left[
\begin{array}{ll}
0& 1 \\
1 & 0
\end{array}%
\right] \label{0101}
\end{equation}
\item Assume that $a$ and $d$ are both odd and $a+d=2q$ in $\mathbb{Z}_{2^{n}}$, where $q$ is odd.

Then $b,c \in \{ 0, 2^{n-1}\}$ and $a^2=d^2 =1$. Therefore $a, d \in\{\pm 1, \pm 1 + 2^{n-1}\}$. Thus either $d=a$ or $d= a+2^{n-1}$ (the latter case is possible only for $n\geq 3$, since when $n=2$, $a+(a+2)=0$).
    Since
   \begin{eqnarray*}
    \left[
\begin{array}{ll}
0& 1 \\
1 & 0
\end{array}%
\right]
\left[
\begin{array}{ll}
a& c \\
b & d
\end{array}%
\right] &=& \left[
\begin{array}{ll}
d& b \\
c & a
\end{array}%
\right]\left[
\begin{array}{ll}
0& 1 \\
1 & 0
\end{array}%
\right]\\
\left[
\begin{array}{ll}
1& 1 \\
0 & 1
\end{array}%
\right]
\left[
\begin{array}{ll}
a& 2^{n-1} \\
2^{n-1} & a
\end{array}%
\right] &=& \left[
\begin{array}{ll}
a+2^{n-1}& 0 \\
2^{n-1} & a+2^{n-1}
\end{array}%
\right]\left[
\begin{array}{ll}
1& 1 \\
0 & 1
\end{array}%
\right]\\
\left[
\begin{array}{ll}
1& 1 \\
1 & 0
\end{array}%
\right]
\left[
\begin{array}{ll}
a& 0 \\
2^{n-1} & a + 2^{n-1}
\end{array}%
\right] &=& \left[
\begin{array}{ll}
a+2^{n-1}& 0 \\
0 & a
\end{array}%
\right]\left[
\begin{array}{ll}
1& 1 \\
1 & 0
\end{array}%
\right]
\end{eqnarray*}
every matrix of this type is similar to one of the following 11 matrices:
\begin{eqnarray}
\left[
\begin{array}{ll}
-1 & 0 \\
0 & -1%
\end{array}%
\right]; \quad \left[
\begin{array}{ll}
1+2^{n-1} & 0 \\
0 & 1+2^{n-1}%
\end{array}%
\right]; \quad
 \left[
\begin{array}{cc}
-1+2^{n-1} & 0 \\
0 & -1+2^{n-1}%
\end{array}%
\right] ; \label{-10-10}\\
\left[
\begin{array}{cc}
1 & 0\\
 2^{n-1} & 1%
\end{array}%
\right] ; \quad  \left[
\begin{array}{cc}
-1 & 0\\
 2^{n-1} & -1%
\end{array}%
\right] ;\quad  \left[
\begin{array}{cc}
1+2^{n-1} & 0\\
 2^{n-1} & 1+2^{n-1}%
\end{array}%
\right] ;\quad  \left[
\begin{array}{cc}
-1+2^{n-1} & 0\\
 2^{n-1} & -1+2^{n-1}%
\end{array}%
\right] ;\notag\\
\left[
\begin{array}{cc}
1 & 0 \\
0 & 1+2^{n-1}%
\end{array}%
\right] ; \quad \left[
\begin{array}{cc}
-1 & 0 \\
0 & -1+2^{n-1}%
\end{array}%
\right] ; \quad \left[
\begin{array}{cc}
1 & 2^{n-1} \\
2^{n-1} & 1+2^{n-1}%
\end{array}%
\right] ; \quad
\left[
\begin{array}{cc}
-1 & 2^{n-1} \\
2^{n-1} & -1+2^{n-1}%
\end{array}%
\right] \notag
\end{eqnarray}
\item Assume that $a$ and $d$ are both odd and  $a+d=4q$ in $\mathbb{Z}_{2^{n}}$.

If $n=2$ the assumption means that $a=-d$.

If $n\geq 3$,  $a-d=2(a-2q)$ where $a-2q$ is odd. Since $(a+d)(a-d)=0$ in $\mathbb{Z}_{2^{n}}$, $a+d \in \{ 0, 2^{n-1}\}$. Thus either $d=-a$ or $d= -a+2^{n-1}$.  Therefore $a, d \in\{\pm 1, \pm 1 + 2^{n-1}\}$.
\begin{enumerate}
\item Assume that $b$ or $c$ is odd. Then $d=-a$ (since $(a+d)b=(a+d)c=0$). Without loss of generality, $b$ is odd. Then the matrix $\left[
\begin{array}{ll}
0 & 1 \\
b & -a
\end{array}%
\right]$ is invertible and
$$
\left[
\begin{array}{ll}
0 & 1 \\
b & -a
\end{array}%
\right]
\left[
\begin{array}{ll}
a& c \\
b & -a
\end{array}%
\right] = \left[
\begin{array}{ll}
0& 1 \\
1 & 0
\end{array}%
\right]\left[
\begin{array}{ll}
0 & 1 \\
b & -a
\end{array}%
\right]
$$
Thus every matrix of this type is similar to $\left[
\begin{array}{ll}
0& 1 \\
1 & 0
\end{array}%
\right]$.
\item Assume that $b$ and $c$ are even and $d=-a$.

If $n=2$, every matrix of this type is similar to $\left[
\begin{array}{ll}
1 & 0 \\
0 & -1
\end{array}%
\right]$ or $\left[
\begin{array}{ll}
1 & 2 \\
2 & -1
\end{array}%
\right]$.
Assume now that $n\geq 3$. Since $\left[
\begin{array}{ll}
a& c \\
b & -a
\end{array}%
\right]$ is similar to $\left[
\begin{array}{ll}
-a& b \\
c & a
\end{array}%
\right]$ we may assume that $a\equiv 1 \pmod 4$. Then the matrices
$$
\left[
\begin{array}{ll}
\frac{a+1}{2}& \frac{c}{2} \\
-\frac{b}{2} & \frac{a+1}{2}
\end{array}%
\right] \quad \text{and} \quad
\left[
\begin{array}{ll}
\frac{a+1}{2}+2^{n-2} & \frac{c}{2} \\
-\frac{b}{2} & \frac{a+1}{2}+2^{n-2}
\end{array}%
\right]
$$
are invertible. Since $a^2+bc=1$, $\frac{a^2+bc-1}{2} =0$ or $2^{n-1}$.

When $\frac{a^2+bc-1}{2} =0$
$$
   \left[
\begin{array}{ll}
\frac{a+1}{2}& \frac{c}{2} \\
-\frac{b}{2} & \frac{a+1}{2}
\end{array}%
\right]
\left[
\begin{array}{ll}
a& c \\
b & -a
\end{array}%
\right] = \left[
\begin{array}{ll}
1& 0 \\
0 & -1
\end{array}%
\right]\left[
\begin{array}{ll}
\frac{a+1}{2}& \frac{c}{2} \\
-\frac{b}{2} & \frac{a+1}{2}
\end{array}%
\right]
$$

When $\frac{a^2+bc-1}{2} =2^{n-1}$
$$
 \left[
\begin{array}{ll}
\frac{a+1+2^{n-1}}{2} & \frac{c}{2} \\
-\frac{b}{2} & \frac{a+1+2^{n-1}}{2}
\end{array}%
\right]
\left[
\begin{array}{ll}
a& c \\
b & -a
\end{array}%
\right] = \left[
\begin{array}{ll}
1+2^{n-1}& 0 \\
0 & -1+2^{n-1}
\end{array}%
\right]\left[
\begin{array}{ll}
\frac{a+1+2^{n-1}}{2} & \frac{c}{2} \\
-\frac{b}{2} & \frac{a+1+2^{n-1}}{2}
\end{array}%
\right]
$$
Therefore every matrix of this type is similar to
\begin{equation}
\left[
\begin{array}{ll}
1& 0 \\
0 & -1
\end{array}%
\right]\quad \text{or} \quad
\left[
\begin{array}{ll}
1+2^{n-1}& 0 \\
0 & -1+2^{n-1}
\end{array}%
\right] \label{100-1}
\end{equation}
\item Assume that $b$ and $c$ are even and $d=-a+2^{n-1}$.
Then $n\geq 3$ and as in the previous case we may assume that $a\equiv 1 \pmod 4$. Then the matrices
$$
\left[
\begin{array}{ll}
\frac{a+1}{2}& \frac{c}{2} \\
-\frac{b}{2} & \frac{a+1}{2}
\end{array}%
\right] \quad \text{and} \quad
\left[
\begin{array}{ll}
\frac{a+1+2^{n-1}}{2} & \frac{c}{2} \\
-\frac{b}{2} & \frac{a+1+2^{n-1}}{2}
\end{array}%
\right]
$$
are invertible. Since $a^2+bc=1$, $\frac{a^2+bc-1}{2} =0$ or $2^{n-1}$.

When $\frac{a^2+bc-1}{2} =0$
$$
 \left[
\begin{array}{ll}
\frac{a+1+2^{n-1}}{2} & \frac{c}{2} \\
-\frac{b}{2} & \frac{a+1+2^{n-1}}{2}
\end{array}%
\right]
\left[
\begin{array}{ll}
a& c \\
b & -a
\end{array}%
\right] = \left[
\begin{array}{ll}
1& 0 \\
0 & -1+2^{n-1}
\end{array}%
\right]\left[
\begin{array}{ll}
\frac{a+1+2^{n-1}}{2} & \frac{c}{2} \\
-\frac{b}{2} & \frac{a+1+2^{n-1}}{2}
\end{array}%
\right]
$$

When $\frac{a^2+bc-1}{2} =2^{n-1}$
$$
   \left[
\begin{array}{ll}
\frac{a+1}{2}& \frac{c}{2} \\
-\frac{b}{2} & \frac{a+1}{2}
\end{array}%
\right]
\left[
\begin{array}{ll}
a& c \\
b & -a
\end{array}%
\right] = \left[
\begin{array}{ll}
1+2^{n-1}& 0 \\
0 & -1
\end{array}%
\right]\left[
\begin{array}{ll}
\frac{a+1}{2}& \frac{c}{2} \\
-\frac{b}{2} & \frac{a+1}{2}
\end{array}%
\right]
$$

Therefore every matrix of this type is similar to
\begin{equation}
\left[
\begin{array}{ll}
1& 0 \\
0 & -1+2^{n-1}
\end{array}%
\right]\quad \text{or} \quad
\left[
\begin{array}{ll}
1+2^{n-1}& 0 \\
0 & -1
\end{array}%
\right] \label{100-1+2n-1}
\end{equation}
\end{enumerate}
\end{enumerate}
\end{proof}

Hopf algebras associated with two different actions are isomorphic only if the corresponding matrices are similar. Thus we will consider the following  $16$ cases, corresponding to the matrices in (\ref{0101})--(\ref{100-1+2n-1}) (only 6 of these cases are possible when $n=2$):
\begin{theorem}\label{cohomology}
Assume that  $H$ is  a semisimple Hopf algebra of dimension $2^{2n+1}$ with group of group-like elements isomorphic to $%
\mathbb{Z}_{2^{n}}\times \mathbb{Z}_{2^{n}}$ for $n\geq 2$, which corresponds to the action associated with one of the matrices in (\ref{0101})--(\ref{100-1+2n-1}). Then, for $\Psi$ defined in Theorem \ref{psi}, we have
\begin{eqnarray*}
{\rm H}^{2}\left( kL,k^{G}\right) &\cong &
 {\rm H}_{c}^{2}\left( kL,k^{G}\right) \cong  {\rm Ker}  \Psi \times {\rm Im} \Psi \\
 &\cong& \hat G ^L/ N_t\left(\hat G\right)\times M(G)^-\cong {\rm H}^{2}\left( L,\hat G\right)\times M(G)^{-}
\end{eqnarray*}
 \end{theorem}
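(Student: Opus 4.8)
The plan is to upgrade the short exact sequence coming from Theorem~\ref{psi} into a \emph{split} sequence by exhibiting, in each of the cases (\ref{0101})--(\ref{100-1+2n-1}), an explicit cocycle realizing a generator of $M(G)^{-}$. Theorem~\ref{psi} already provides the homomorphism $\Psi$ with ${\rm Ker}\,\Psi\cong\hat G^{L}/N_t(\hat G)\cong{\rm H}^{2}(L,\hat G)$ and ${\rm Im}\,\Psi\subseteq M(G)^{-}$, and Remark~\ref{trivial} identifies ${\rm H}^{2}(kL,k^{G})$ with ${\rm H}^{2}_{c}(kL,k^{G})$. Hence there is an exact sequence
$$1\longrightarrow{\rm Ker}\,\Psi\longrightarrow{\rm H}^{2}_{c}(kL,k^{G})\overset{\Psi}{\longrightarrow}{\rm Im}\,\Psi\longrightarrow1,$$
and since ${\rm H}^{2}_{c}$ is abelian it suffices to prove the two statements ${\rm Im}\,\Psi=M(G)^{-}$ and that this sequence splits; together they give ${\rm H}^{2}_{c}\cong{\rm Ker}\,\Psi\times M(G)^{-}$, the asserted decomposition.

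First I would determine $M(G)^{-}$. Since the alternating pairing attached to $\alpha_{1,2}$ transforms under the action matrix $A$ by $\det A$, Proposition~\ref{12_21}(1) gives $t\rightharpoonup[\alpha_{1,2}]=[\alpha_{1,2}]^{\det A}$. Reading off $\det A$ on the representatives (\ref{0101})--(\ref{100-1+2n-1}) produces only the values $1$, $-1$, and $-1+2^{n-1}$, whence $M(G)^{-}$ is cyclic of order $2$, $2^{n}$, or $2^{n-1}$, generated respectively by $[\alpha^{2^{n-1}}_{1,2}]$, $[\alpha_{1,2}]$, or $[\alpha^{2}_{1,2}]$.

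Next I would build the section. For each matrix I seek a cocycle $\tilde\tau\in{\rm Z}^{2}_{c}$ whose class maps under $\Psi$ to the chosen generator of $M(G)^{-}$; producing one proves surjectivity and simultaneously defines the splitting. In every case other than the swap (\ref{0101}) one has $a,d$ odd and $b,c$ even by Theorem~\ref{action}, so $t\rightharpoonup\alpha_{1,2}$ differs from $\alpha^{\det A}_{1,2}$ only by powers of $\alpha_{1,1},\alpha_{2,1},\alpha_{2,2}$ with even exponent; a short parity computation then shows that the relevant power $\alpha^{j}_{1,2}$ satisfies the fixing condition (\ref{fix}) exactly, and one takes $\tilde\tau=\tau^{j}_{1,2}$. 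The one genuinely different case is (\ref{0101}), where $t\rightharpoonup\alpha_{1,2}=\alpha_{2,1}$ and $\alpha_{1,2}\cdot(t\rightharpoonup\alpha_{1,2})=\partial\mu$ is a nontrivial coboundary; there I correct by $\partial\nu$ with $\nu(x^{a_{1}}y^{a_{2}})=\omega^{-(1+2^{n-1})a_{1}a_{2}}$, for which $\nu\cdot(t\rightharpoonup\nu)=\mu^{-1}$, so that $\tilde\tau_{t}=\alpha_{1,2}\,\partial\nu$ gives $\tilde\tau_{t}\cdot(t\rightharpoonup\tilde\tau_{t})=\partial(\mu\cdot\mu^{-1})=1$ and (\ref{fix}) holds.

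Finally I would check that $[\tilde\tau]$ has order exactly $|M(G)^{-}|$, which yields the splitting. In every case $\tilde\tau_{t}^{\,|M(G)^{-}|}$ equals the coboundary $\partial(\gamma)$ of a function $\gamma$ (with $\gamma=1$ or $\gamma=\nu^{2^{n}}$) satisfying $\gamma\cdot(t\rightharpoonup\gamma)=1$, because $\mu^{2^{n}}=\alpha^{2^{n}}_{1,2}=1$; hence $\tilde\tau^{\,|M(G)^{-}|}\in{\rm B}^{2}_{c}$, and since $\Psi([\tilde\tau])$ generates $M(G)^{-}$ the class $[\tilde\tau]$ has order precisely $|M(G)^{-}|$. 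Therefore $\langle[\tilde\tau]\rangle$ maps isomorphically onto $M(G)^{-}$, meets ${\rm Ker}\,\Psi$ trivially, and together with ${\rm Ker}\,\Psi$ generates ${\rm H}^{2}_{c}$, giving the direct product. I expect the swap case (\ref{0101}) to be the main obstacle: there no adjustment with values in $2^{n}$-th roots of unity can enforce (\ref{fix}), forcing the passage to the primitive $2^{n+1}$-th root $\omega$, and one must verify that this correction does not inflate the order of $[\tilde\tau]$ beyond $|M(G)^{-}|$.
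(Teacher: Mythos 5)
Your outline is in essence the paper's own: the paper likewise starts from the exact sequence furnished by Theorem~\ref{psi}, identifies ${\rm H}^{2}\left( kL,k^{G}\right)$ with ${\rm H}_{c}^{2}\left( kL,k^{G}\right)$, and then runs through the sixteen matrices, in each case exhibiting an explicit cocycle that satisfies (\ref{fix}) \emph{as a function} and whose class generates $M\left( G\right)^{-}$; the direct product then follows from the order count you describe. Your determinant bookkeeping, $t\rightharpoonup [\alpha_{1,2}]=[\alpha_{1,2}]^{\det A}$, is a tidier way to organize those computations. The genuine gap is in your uniform parity claim for the non-swap cases. Condition (\ref{fix}) is an identity of functions, not of cohomology classes: writing $t\rightharpoonup\alpha _{1,2}=\alpha _{1,1}^{ab}\alpha _{1,2}^{ad}\alpha _{2,1}^{cb}\alpha _{2,2}^{cd}$, the requirement on $\tilde\tau _{t}=\alpha _{1,2}^{j}$ is $jab\equiv jcb\equiv jcd\equiv 0\pmod{2^{n}}$, and since $b,c\in \{0,2^{n-1}\}$ your parity argument settles this only when $j$ is even or when $b=c=0$. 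Both escapes fail in Case 15 with $n=2$, i.e.\ for $t\rightharpoonup x=xy^{2}$, $t\rightharpoonup y=x^{2}y^{-1}$: there $\det A=-1$, so $M\left( G\right) ^{-}=M\left( G\right) \cong \mathbb{Z}_{4}$ and every generator is an odd power of $[\alpha _{1,2}]$, while $b=c=2$, giving $\alpha _{1,2}\cdot \left( t\rightharpoonup \alpha _{1,2}\right) =\alpha _{1,1}^{2}\alpha _{2,2}^{2}\neq 1$; so your $\tilde\tau=\tau_{1,2}$ does not even lie in ${\rm Z}_{c}^{2}\left( kL,k^{G}\right)$ there. This is not a peripheral case: it is one of the six actions occurring at $n=2$ and accounts for two of the sixteen Hopf algebras of dimension $32$. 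The paper (Proposition~\ref{122-1}) repairs it by replacing $\alpha _{1,2}$ with the cohomologous cocycle $\alpha _{1,1}\alpha _{1,2}\alpha _{2,2}$ (legitimate since $[\alpha _{j,j}]=1$ by Proposition~\ref{12_21}) and verifying (\ref{fix}) for that product directly --- the same coboundary-correction device you use for the swap case, only with $\xi$-valued functions.

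Two smaller defects. First, your list of determinants omits $1+2^{n-1}$, which occurs in Cases 5 and 6 and, for $n\geq 3$, in Cases 15 and 16; there $M\left( G\right)^{-}$ still has order $2$, so your final description of $M\left( G\right)^{-}$ survives, but the trichotomy as stated is inaccurate. Second, in the swap case your correction $\nu \left( x^{a_{1}}y^{a_{2}}\right) =\omega ^{-\left( 1+2^{n-1}\right) a_{1}a_{2}}$ does not do what is claimed for $n\geq 3$: with $\omega^{2}=\xi$ one gets $\tilde\tau_{t}\cdot\left( t\rightharpoonup \tilde\tau_{t}\right) =\partial\bigl(\xi ^{-\left( 2+2^{n-1}\right) a_{1}a_{2}}\bigr)$, which is trivial only when $2+2^{n-1}\equiv 0\pmod{2^{n}}$, i.e.\ only for $n=2$ (and with $\omega^{2}=-\xi$ it is never trivial). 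For general $n$ the correct exponent is $+1$: take $\nu\left( x^{a_{1}}y^{a_{2}}\right)=\omega ^{a_{1}a_{2}}$ with $\omega ^{2}=\xi $, which is precisely the paper's choice of $\gamma$ in Proposition~\ref{xy}. Both problems are fixable with techniques already present in your proposal, but as written the argument fails for one of the six $n=2$ cases and for the swap case when $n\geq 3$.
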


In order to prove this theorem we will consider the following sixteen cases corresponding to the matrices in (\ref{0101})--(\ref{100-1+2n-1}) (only six of them are possible when $n=2$) and establish the results in each of the cases separately.

\subsection{Case 1: $t\rightharpoonup x = y, \quad
t\rightharpoonup y = x$}\quad

\begin{proposition}\label{xy} Let $\omega$ be a primitive $2^{n+1}$-th root of $1$ such that $\omega^2 =\xi$.
 $$
 {\rm H}^{2}\left( kL,k^{G}\right)= \left\langle [(\sigma^{\beta} , \tau ^{j}_{1,2})]\right\rangle \cong {\rm Im} \Psi \cong \left\langle [\alpha^{\prime} _{1,2}] \right\rangle  \cong \mathbb{Z}_{2^{n}} .
 $$
 where
 $$
 \alpha _{1,2}^{\prime }\left( x^{a_{1}}y^{a_{2}},x^{b_{1}}y^{b_{2}}\right)
 =\omega^{a_{1}b_{2}-a_{2}b_{1}}(-1)^{(a_1+b_1)k_2+(a_2+b_2)k_1}
 $$
with $k_j = 0$ if $0\leq a_j+b_j < 2^{n}$ and $k_j = 1$ if $2^{n}\leq a_j+b_j < 2^{n+1}$ and
 $$
 \beta\left( x^{a_{1}}y^{a_{2}}\right) =\xi^{a_1 a_2}.
 $$
\end{proposition}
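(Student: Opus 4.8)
The plan is to treat this as the first case of Theorem~\ref{cohomology}, computing ${\rm Ker}\,\Psi$ and ${\rm Im}\,\Psi$ separately for the swap action $t\rightharpoonup x=y$, $t\rightharpoonup y=x$. The decisive simplification is that the kernel turns out to be trivial, so the exact sequence $1\to{\rm Ker}\,\Psi\to{\rm H}^2(kL,k^G)\to{\rm Im}\,\Psi\to1$ furnished by Theorem~\ref{psi} collapses and $\Psi$ becomes an isomorphism onto its image. Once that image is identified with $M(G)\cong\mathbb{Z}_{2^n}$ the cyclic structure follows at once, and the remaining labor is to rewrite the generator in the two advertised normal forms.

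First I would compute the kernel. Since $t$ acts on $G$ by interchanging $x$ and $y$, the induced action on $\hat G$ interchanges $\beta_1$ and $\beta_2$; hence $\hat G^L=\langle\beta_1\beta_2\rangle$, and $N_t(\hat G)=\{\gamma\,(t\rightharpoonup\gamma):\gamma\in\hat G\}$ also equals $\langle\beta_1\beta_2\rangle$, because $\beta_1^i\beta_2^j\,(t\rightharpoonup\beta_1^i\beta_2^j)=(\beta_1\beta_2)^{i+j}$ sweeps out the whole diagonal as $i,j$ vary. By Theorem~\ref{psi}(3) this gives ${\rm Ker}\,\Psi\cong\hat G^L/N_t(\hat G)\cong{\rm H}^2(L,\hat G)=1$.

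Next I would pin down the image. By Theorem~\ref{psi}(2) we have ${\rm Im}\,\Psi\subseteq M(G)^-$, and Proposition~\ref{12_21}(1) yields $t\rightharpoonup[\alpha_{1,2}]=[\alpha_{2,1}]=[\alpha_{1,2}]^{-1}$, so the generator $[\alpha_{1,2}]$ of $M(G)$ already lies in $M(G)^-$; being a subgroup containing a generator, $M(G)^-=M(G)\cong\mathbb{Z}_{2^n}$. To see that $\Psi$ is onto I would exhibit an actual cocycle hitting this generator: with $\beta=\mu$, $\mu(x^{a_1}y^{a_2})=\xi^{a_1a_2}$, set $\sigma^\beta_t=\mu$ and $(\tau_{1,2})_t=\alpha_{1,2}$. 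The pair $(\sigma^\beta,\tau_{1,2})$ satisfies the normalization and invariance conditions (\ref{sigma1})--(\ref{sigma2}) at once, and the only substantive check is the compatibility (\ref{sigma3}), which reduces to $\mu(gh)\mu(g)^{-1}\mu(h)^{-1}=\xi^{a_1b_2+a_2b_1}=\alpha_{1,2}(g,h)\,\alpha_{1,2}(t\rightharpoonup g,t\rightharpoonup h)$ and holds precisely because $t$ swaps the two coordinates. Thus $[\alpha_{1,2}]=\Psi\left([(\sigma^\beta,\tau_{1,2})]\right)\in{\rm Im}\,\Psi$, so ${\rm Im}\,\Psi=M(G)$.

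Combining the two computations, $\Psi$ is an isomorphism, whence ${\rm H}^2(kL,k^G)={\rm H}^2_c(kL,k^G)$ is cyclic of order $2^n$, generated by $[(\sigma^\beta,\tau_{1,2})]$, whose $j$-th power is $[(\sigma^{\beta^j},\tau^j_{1,2})]$. To reach the trivial-cocycle form I would invoke Remark~\ref{trivial}: choosing $\gamma(x^{a_1}y^{a_2})=\omega^{a_1a_2}$ gives $\gamma(g)\,\gamma(t\rightharpoonup g)=\omega^{2a_1a_2}=\mu(g)$, so by Lemma~\ref{equiv2} the class $[(\sigma^\beta,\tau_{1,2})]$ equals $[({\rm triv},\tau')]$ with $\tau'_t(g,h)=\gamma(gh)^{-1}\gamma(g)\gamma(h)\,\alpha_{1,2}(g,h)$. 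Writing $a_j+b_j=(a_j+b_j\bmod 2^n)+2^nk_j$ and using $\omega^{2^n}=-1$ to expand $\gamma(gh)$ turns this expression into exactly $\alpha'_{1,2}$, and $\Psi([({\rm triv},\tau')])=[\alpha'_{1,2}]$ identifies ${\rm Im}\,\Psi$ with $\langle[\alpha'_{1,2}]\rangle$. I expect the one genuine obstacle to be this final bookkeeping: because $\xi$ has no square root inside the exponent group $\mathbb{Z}_{2^n}$ one is forced up to the primitive $2^{n+1}$-th root $\omega$, and the carries $k_1,k_2$ generate the corrective sign $(-1)^{(a_1+b_1)k_2+(a_2+b_2)k_1}$; as a consistency check one can also verify directly that $\alpha'_{1,2}\in{\rm Z}^2(G,k^\times)$ and satisfies (\ref{fix}), which is the same carry computation run backwards.
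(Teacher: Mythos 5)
Your proof is correct and is essentially the paper's own argument: both rest on Theorem~\ref{psi}, the kernel computation $\hat G^L=N_t\left(\hat G\right)=\left\langle \beta_1\beta_2\right\rangle$ (so ${\rm Ker}\,\Psi=1$), part 1 of Proposition~\ref{12_21} to get $M(G)^-=M(G)=\left\langle [\alpha_{1,2}]\right\rangle$, and the same auxiliary cochains $\mu\left(x^{a_1}y^{a_2}\right)=\xi^{a_1a_2}$ and $\gamma\left(x^{a_1}y^{a_2}\right)=\omega^{a_1a_2}$ linking the pair form $[(\sigma^{\beta},\tau_{1,2})]$ with the trivial-cocycle form $[({\rm triv},\tau')]$, $\tau'_t=\alpha^{\prime}_{1,2}$. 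The only (harmless) difference is organizational: you verify the pair condition (\ref{sigma3}) for $(\sigma^{\beta},\tau_{1,2})$ first, which is a carry-free bicharacter computation, and defer the carry bookkeeping to the conversion via Lemma~\ref{equiv2}, whereas the paper first constructs $\alpha^{\prime}_{1,2}$ and checks (\ref{fix}) directly, then rewrites the generator in the pair form --- the two verifications are equivalent under that same conversion.
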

\begin{proof}
Since $t\rightharpoonup x = y $ and $
t\rightharpoonup y = x$,

\begin{eqnarray*}
t\rightharpoonup \alpha _{1,2} &=& \alpha _{2,1} \\
t\rightharpoonup \beta_{1}&=& \beta_{2}\\
t\rightharpoonup \beta_{2}&=& \beta_{1}
\end{eqnarray*}
By part 1 of Proposition \ref{12_21}, $\left[ \alpha _{2,1}\right] =\left[ \alpha ^{-1}_{1,2}\right]$ in $M\left( G\right)$. Therefore $M\left( G\right)^{-} =M\left( G\right)$. We will now define $\alpha _{1,2}^{\prime } \in {\rm Z}^2\left( G, k^{\times}\right) $ such that $\left[ \alpha _{1,2}\right] = \left[ \alpha _{1,2}^{\prime }\right] $ and $\alpha^{\prime } _{1,2}\left( t\rightharpoonup\alpha^{\prime } _{1,2}\right)=1$.

Let $\omega$ be a primitive $2^{n+1}$-th root of $1$ such that $\omega^2 =\xi$. Define $\gamma
\in \left( k^{G}\right) ^{\times }$ and $\alpha _{1,2}^{\prime } \in {\rm Z}^2\left( G, k^{\times}\right) $ via
\begin{eqnarray}
\gamma \left( x^{a_{1}}y^{a_{2}}\right)
&=&\omega ^{a_{1}a_{2}}  \label{gamma}\\
\alpha _{1,2}^{\prime }\left( g,h \right)&=&\alpha _{1,2}\left( g,h \right)\gamma \left( g\right) \gamma \left( h\right) \gamma \left( gh\right)
^{-1}\notag
\end{eqnarray}
Then $\left[ \alpha _{1,2}^{\prime }\right] = \left[ \alpha _{1,2}\right]$ and
\begin{eqnarray*}
\alpha _{1,2}^{\prime }\left( x^{a_{1}}y^{a_{2}},x^{b_{1}}y^{b_{2}}\right) &=& \xi^{a_{1}b_{2}}\omega ^{a_{1}a_{2}}\omega ^{b_{1}b_{2}} \omega^{-(a_{1}+b_{1} -2^{n}k_1)(a_{2}+b_{2} -2^{n}k_2)}\\
&=& \omega^{a_{1}b_{2}-a_{2}b_{1}}(-1)^{(a_1+b_1)k_2+(a_2+b_2)k_1}
\end{eqnarray*}
where $k_j = 0$ if $0\leq a_j+b_j < 2^{n}$ and $k_j = 1$ if $2^{n}\leq a_j+b_j < 2^{n+1}$.
\begin{eqnarray*}
&&\alpha^{\prime } _{1,2}\left( x^{a_{1}}y^{a_{2}},x^{b_{1}}y^{b_{2}}\right)\left( t\rightharpoonup\alpha^{\prime } _{1,2}\right)\left( x^{a_{1}}y^{a_{2}},x^{b_{1}}y^{b_{2}}\right)\\
&& =\xi^{a_{1}b_{2}}\omega ^{a_{1}a_{2}}\omega ^{b_{1}b_{2}} \omega^{-(a_{1}+b_{1} -2^{n}k_1)(a_{2}+b_{2} -2^{n}k_2)}
 \xi ^{a_{2}b_{1}}\omega ^{a_{2}a_{1}}\omega ^{b_{2}b_{1}} \omega^{-(a_{2}+b_{2} -2^{n}k_2)(a_{1}+b_{1} -2^{n}k_1)} \\
&&= \xi^{a_{1}b_{2}+a_{2}b_{1}}\xi ^{a_{1}a_{2}}\xi ^{b_{1}b_{2}} \xi^{-(a_{1}+b_{1} -2^{n}k_1)(a_{2}+b_{2} -2^{n}k_2)}=\xi^{a_{1}b_{2}+a_{2}b_{1}}\xi ^{a_{1}a_{2}}\xi ^{b_{1}b_{2}} \xi^{-a_{1}a_{2}-a_{1}b_{2} -a_{2}b_{1}-b_{1}b_{2}}=1
\end{eqnarray*}
Thus $\alpha^{\prime } _{1,2}\left( t\rightharpoonup\alpha^{\prime } _{1,2}\right)=1$ and therefore $\left[ \alpha _{1,2}^{\prime }\right] = \left[ \alpha _{1,2}\right] \in {\rm Im} \Psi$.

 Then
\begin{itemize}
\item ${\rm Im} \Psi = M\left( G\right) = \left\langle [\alpha ^{\prime }_{1,2}] \right\rangle$.
\item $\hat G ^L = \left\langle \beta_{1}\beta_{2} \right\rangle $ and $ N_t\left(\hat G\right) = \left\langle \beta_{1}\beta_{2}\right\rangle $.
\item ${\rm Ker}  \Psi = \{ 1\}$.

 \end{itemize}

Define $(\tau^{\prime } _{p,q})^j$ via $\left((\tau^{\prime } _{p,q})^j\right) _t = (\alpha^{\prime } _{1,2})^j$. Then $(\tau^{\prime } _{p,q})^j$ is a dual cocycle. Define $\beta  \in \left( k^{G}\right) ^{\times }$ via
$$
\beta\left( x^{a_{1}}y^{a_{2}}\right) =\xi^{a_1 a_2}
$$
Since $\beta (g) =\gamma \left( g\right) \gamma
\left( t\rightharpoonup g\right)$ for $\gamma \in \left( k^{G}\right) ^{\times }$ defined in formula (\ref{gamma}),
$$
 [( {\rm triv},(\tau^{\prime } _{1,2})^j)]=[(\sigma^{\beta ^j} , \tau ^{j}_{1,2})]   \in {\rm H}^{2}\left( kL,k^{G}\right)
 $$

 Thus
 $$
 {\rm H}^{2}\left( kL,k^{G}\right) = \left\langle [( {\rm triv},\tau^{\prime } _{1,2})]\right\rangle = \left\langle [(\sigma^{\beta} , \tau ^{j}_{1,2})]\right\rangle \cong \left\langle [\alpha^{\prime} _{1,2}] \right\rangle  \cong \mathbb{Z}_{2^{n}}
$$
 \end{proof}


\subsection{Case 2: $t\rightharpoonup x = x^{-1}, \quad t\rightharpoonup y = y^{-1}$}\quad


 \begin{proposition} \label{-1-1} In this case
 $$
 {\rm H}^{2}\left( kL,k^{G}\right)\cong {\rm Ker}  \Psi \times {\rm Im} \Psi \cong \left\langle [\alpha^{2^{n-1}} _{1,1}] \right\rangle \times\left\langle [\alpha^{2^{n-1}} _{2,2}] \right\rangle \times\left\langle [\alpha^{2^{n-1}} _{1,2}] \right\rangle \cong \mathbb{Z}_{2} \times\mathbb{Z}_{2} \times \mathbb{Z}_{2}.
 $$
\end{proposition}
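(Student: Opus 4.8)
The plan is to apply Theorem \ref{psi} directly: I would compute ${\rm Ker}\,\Psi$ and ${\rm Im}\,\Psi = M(G)^{-}$ explicitly for the action $-I$, and then assemble the direct-product decomposition from the exact sequence $1 \to {\rm Ker}\,\Psi \to {\rm H}^{2}(kL,k^{G}) \xrightarrow{\Psi} {\rm Im}\,\Psi \to 1$. First I would record how $t$ acts. Since $t\rightharpoonup x = x^{-1}$ and $t\rightharpoonup y = y^{-1}$, one gets $t\rightharpoonup \beta_j = \beta_j^{-1}$ for $j=1,2$ and $t\rightharpoonup \alpha_{1,2} = \alpha_{1,2}$. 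From the former, $\hat G^L = \{\beta_1^a\beta_2^b : 2a\equiv 2b\equiv 0 \pmod{2^n}\} = \langle \beta_1^{2^{n-1}}\rangle \times \langle \beta_2^{2^{n-1}}\rangle \cong \mathbb{Z}_2\times\mathbb{Z}_2$, while $N_t(\hat G) = \{1\}$ because $\gamma(t\rightharpoonup\gamma)=\gamma\gamma^{-1}=1$ for every $\gamma\in\hat G$. Hence Theorem \ref{psi}(3) gives ${\rm Ker}\,\Psi\cong\hat G^L\cong\mathbb{Z}_2\times\mathbb{Z}_2$, and under the isomorphism $\phi$ of that proof the two generators are $\phi(\beta_1^{2^{n-1}})=[(\sigma^{\beta_1^{2^{n-1}}},{\rm triv})]$ and $\phi(\beta_2^{2^{n-1}})=[(\sigma^{\beta_2^{2^{n-1}}},{\rm triv})]$. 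Applying parts 4 and 3 of Proposition \ref{12_21} (with $s=-1$, and with $q=-1,\ r=0$, respectively) rewrites these as $[({\rm triv},\tau^{2^{n-1}}_{1,1})]$ and $[({\rm triv},\tau^{2^{n-1}}_{2,2})]$, which are the first two claimed factors.

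For the image, the identity $t\rightharpoonup[\alpha_{1,2}]=[\alpha_{1,2}]$ shows that $[\alpha_{1,2}^{k}]\in M(G)^{-}$ iff $[\alpha_{1,2}^{2k}]=1$ iff $k\in\{0,2^{n-1}\}$, so $M(G)^{-}=\langle[\alpha_{1,2}^{2^{n-1}}]\rangle\cong\mathbb{Z}_2$ (recall $M(G)\cong\mathbb{Z}_{2^n}$). Theorem \ref{psi}(2) gives ${\rm Im}\,\Psi\subseteq M(G)^{-}$; for the reverse inclusion I would check that the cocycle $\alpha_{1,2}^{2^{n-1}}$, which equals $(-1)^{a_1b_2}$ since $\xi^{2^{n-1}}=-1$, already satisfies the constraint (\ref{fix}), namely $\alpha_{1,2}^{2^{n-1}}(g,h)\,\alpha_{1,2}^{2^{n-1}}(t\rightharpoonup g,t\rightharpoonup h)=(-1)^{2a_1b_2}=1$. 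Thus $\tau^{2^{n-1}}_{1,2}\in{\rm Z}_c^{2}(kL,k^{G})$ and $\Psi([({\rm triv},\tau^{2^{n-1}}_{1,2})])=[\alpha_{1,2}^{2^{n-1}}]$, so ${\rm Im}\,\Psi=M(G)^{-}\cong\mathbb{Z}_2$.

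Finally I would assemble the decomposition. The exact sequence gives $|{\rm H}^{2}(kL,k^{G})|=|{\rm Ker}\,\Psi|\cdot|{\rm Im}\,\Psi|=8$. Writing $c=[({\rm triv},\tau^{2^{n-1}}_{1,2})]$, its square is $[({\rm triv},\tau^{2^{n}}_{1,2})]$, and since $(\tau^{2^{n}}_{1,2})_t=\alpha_{1,2}^{2^{n}}$ is trivial (as $\xi^{2^{n}}=1$), the element $c$ has order $2$. As $\Psi(c)\neq 1$ we have $c\notin{\rm Ker}\,\Psi$, so $\langle c\rangle\cap{\rm Ker}\,\Psi=\{1\}$ and $|{\rm Ker}\,\Psi\cdot\langle c\rangle|=8$. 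Therefore ${\rm H}^{2}(kL,k^{G})={\rm Ker}\,\Psi\times\langle c\rangle\cong\mathbb{Z}_2^3$, with the three cyclic factors generated by $[({\rm triv},\tau^{2^{n-1}}_{1,1})]$, $[({\rm triv},\tau^{2^{n-1}}_{2,2})]$, and $[({\rm triv},\tau^{2^{n-1}}_{1,2})]$, as claimed. The only genuinely delicate step is the splitting: I must verify $c^2=1$, so that the extension of $\mathbb{Z}_2$ by $\mathbb{Z}_2\times\mathbb{Z}_2$ is elementary abelian rather than $\mathbb{Z}_4\times\mathbb{Z}_2$; the cocycle identities and the order count then force the rest.
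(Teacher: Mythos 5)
Your proof is correct and follows essentially the same route as the paper's: the same computation of the action on $\beta_1,\beta_2,\alpha_{1,2}$, hence of $\hat G^L$, $N_t(\hat G)$, and $M(G)^{-}$, and the same use of Theorem \ref{psi} together with parts 3 and 4 of Proposition \ref{12_21} to identify the kernel generators as $[({\rm triv},\tau^{2^{n-1}}_{1,1})]$ and $[({\rm triv},\tau^{2^{n-1}}_{2,2})]$. The only difference is that you spell out two steps the paper leaves implicit, namely that $\tau^{2^{n-1}}_{1,2}$ satisfies (\ref{fix}) and hence lies in ${\rm Z}_{c}^{2}\left( kL,k^{G}\right)$ (so ${\rm Im}\,\Psi$ is all of $M(G)^{-}$, not merely contained in it), and that the resulting extension of $\mathbb{Z}_2$ by ${\rm Ker}\,\Psi$ splits because $c^{2}=1$; both verifications are accurate.
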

\begin{proof}
Since $t\rightharpoonup x = x^{-1}$ and $ t\rightharpoonup y = y^{-1}$,
\begin{eqnarray*}
t\rightharpoonup \alpha _{1,2} &=& \alpha_{1,2}\\
t\rightharpoonup \beta_{j}&=& \beta_{j}^{-1} \quad \text{ for } j=1,2
\end{eqnarray*}
Thus
\begin{itemize}
\item ${\rm Im} \Psi = M\left( G\right)^{-} = \left\langle [\alpha^{2^{n-1}} _{1,2}] \right\rangle$.
\item $\hat G ^L = \left\langle \beta^{2^{n-1}} _1 \right\rangle \times \left\langle \beta^{2^{n-1}} _2 \right\rangle $ and $ N_t\left(\hat G\right) = \{1\} $.
\item ${\rm Ker}  \Psi = \left\langle [(\sigma^{\beta^{2^{n-1}} _1} , {\rm triv})] \right\rangle \times \left\langle [(\sigma^{\beta^{2^{n-1}} _2} , {\rm triv})] \right\rangle \cong \mathbb{Z}_{2} \times \mathbb{Z}_{2}$.
\end{itemize}
 By parts 3 and 4 of Proposition \ref{12_21}, $[(\sigma^{\beta^{2^{n-1}} _j} , {\rm triv})] =  [( {\rm triv}, \tau ^{2^{n-1}}_{j,j} )] $ for $j=1,2$.
Thus
$$
{\rm Ker}  \Psi = \left\langle [( {\rm triv}, \tau ^{2^{n-1}}_{1,1} )]  \right\rangle \times \left\langle [( {\rm triv}, \tau ^{2^{n-1}}_{2,2} )] \right\rangle\cong
\left\langle [\alpha^{2^{n-1}} _{1,1}] \right\rangle \times\left\langle [\alpha^{2^{n-1}} _{2,2}] \right\rangle
$$
and
$$
 {\rm H}^{2}\left( kL,k^{G}\right) \cong
 {\rm H}_{c}^{2}\left( kL,k^{G}\right) \cong \left\langle [\alpha^{2^{n-1}} _{1,1}] \right\rangle \times\left\langle [\alpha^{2^{n-1}} _{1,2}] \right\rangle \times\left\langle [\alpha^{2^{n-1}} _{2,2}] \right\rangle \cong \mathbb{Z}_{2} \times\mathbb{Z}_{2} \times \mathbb{Z}_{2}.
 $$
\end{proof}
\subsection{Case 3: $t\rightharpoonup x = x^{1+2^{n-1}}, \quad t\rightharpoonup y = y^{1+2^{n-1}}$} \quad

In this case we assume that $n\geq 3$, since for $n=2$ the matrix corresponding to the action of $t$ is equal to the matrix from Case 2. Then
 \begin{proposition} In this case
 $$
 {\rm H}^{2}\left( kL,k^{G}\right)\cong {\rm Im} \Psi \cong \left\langle [\alpha^{2^{n-1}} _{1,2}] \right\rangle  \cong \mathbb{Z}_{2}.
 $$
\end{proposition}
\begin{proof}
Since $t\rightharpoonup x = x^{1+2^{n-1}}$ and $t\rightharpoonup y = y^{1+2^{n-1}}$,
\begin{eqnarray*}
t\rightharpoonup \alpha _{1,2} &=& \alpha_{1,2}\\
t\rightharpoonup \beta_{j}&=& \beta_{j}^{ 1+2^{n-1}}\quad \text{ for } j=1,2
\end{eqnarray*}

Thus
\begin{itemize}
\item ${\rm Im} \Psi = M\left( G\right)^{-} = \left\langle [\alpha^{2^{n-1}} _{1,2}] \right\rangle$.
\item $\hat G ^L = \left\langle \beta^{2} _1 \right\rangle \times \left\langle \beta^{2} _2 \right\rangle $ and $ N_t\left(\hat G\right) =\left\langle \beta^{2} _1 \right\rangle \times \left\langle \beta^{2} _2 \right\rangle$.
\item ${\rm Ker}  \Psi = \{1\} $.
\end{itemize}

Therefore
$$
 {\rm H}^{2}\left( kL,k^{G}\right) \cong
 {\rm H}_{c}^{2}\left( kL,k^{G}\right) \cong \left\langle [\alpha^{2^{n-1}} _{1,2}] \right\rangle  \cong \mathbb{Z}_{2}.
 $$
\end{proof}


\subsection{Case 4: $t\rightharpoonup x = x^{-1+2^{n-1}}, \quad t\rightharpoonup y = y^{-1+2^{n-1}}$}\quad

In this case we assume that $n\geq 3$, since for $n=2$ this  action is trivial.

 \begin{proposition} In this case
 $$
 {\rm H}^{2}\left( kL,k^{G}\right)\cong {\rm Im} \Psi \cong \left\langle [\alpha^{2^{n-1}} _{1,2}] \right\rangle  \cong \mathbb{Z}_{2}.
 $$
\end{proposition}
\begin{proof}
Since $t\rightharpoonup x = x^{-1+2^{n-1}}$ and $ t\rightharpoonup y = y^{-1+2^{n-1}}$,
\begin{eqnarray*}
t\rightharpoonup \alpha _{1,2} &=& \alpha_{1,2}\\
t\rightharpoonup \beta_{j}&=& \beta_{j}^{ -1+2^{n-1}}\quad \text{ for } j=1,2
\end{eqnarray*}
Thus
\begin{itemize}
\item ${\rm Im} \Psi = M\left( G\right)^{-} = \left\langle [\alpha^{2^{n-1}} _{1,2}] \right\rangle$.
\item $\hat G ^L = \left\langle \beta^{2^{n-1}} _1 \right\rangle \times \left\langle \beta^{2^{n-1}} _2 \right\rangle $ and $ N_t\left(\hat G\right) =\left\langle \beta^{2^{n-1}} _1 \right\rangle \times \left\langle \beta^{2^{n-1}} _2 \right\rangle  $.
\item ${\rm Ker}  \Psi = \{1\} $.
\end{itemize}

Therefore
$$
 {\rm H}^{2}\left( kL,k^{G}\right) \cong
 {\rm H}_{c}^{2}\left( kL,k^{G}\right) \cong \left\langle [\alpha^{2^{n-1}} _{1,2}] \right\rangle  \cong \mathbb{Z}_{2}.
 $$
\end{proof}


\subsection{Case 5: $t\rightharpoonup x = x, \quad t\rightharpoonup y = y^{1+2^{n-1}}$}\quad

In this case we assume that $n\geq 3$, since for $n=2$ the matrix corresponding to the action of $t$ is equal to the matrix from Case 7.

\begin{proposition} In this case
 $$
{\rm H}^{2}\left( kL,k^{G}\right) = \left\langle [(\sigma^{\beta _1} , {\rm triv})] \right\rangle   \times\left\langle [({\rm triv},\tau^{2^{n-1}} _{1,2})] \right\rangle \cong {\rm Ker}  \Psi \times {\rm Im} \Psi\cong \left\langle [\alpha '] \right\rangle\times \left\langle [\alpha^{2^{n-1}} _{1,2}] \right\rangle\cong \mathbb{Z}_{2} \times\mathbb{Z}_{2}
$$ for $\alpha '$ defined via
$$
\alpha ' \left( x^{a_{1}}y^{a_{2}},x^{b_{1}}y^{b_{2}}\right) =\left( -1\right)
^{k}
$$
where $k= 0$ if $0\leq a_1+b_1 < 2^{n}$ and $k = 1$ if $2^{n}\leq a_1+b_1 < 2^{n+1}$.
\end{proposition}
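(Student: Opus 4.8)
The plan is to reuse the template of the preceding cases: first record how $t$ acts on the generator $\alpha_{1,2}$ of $M(G)$ and on the characters $\beta_1,\beta_2$, then read off ${\rm Im}\,\Psi$ and ${\rm Ker}\,\Psi$ from Theorem \ref{psi}, and finally assemble the direct-product decomposition. From $t\rightharpoonup x = x$ and $t\rightharpoonup y = y^{1+2^{n-1}}$ one computes directly, using $\xi^{2^{n-1}}=-1$, that $t\rightharpoonup\alpha_{1,2}=\alpha_{1,2}^{1+2^{n-1}}$, that $t\rightharpoonup\beta_1=\beta_1$, and that $t\rightharpoonup\beta_2=\beta_2^{1+2^{n-1}}$. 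These three identities drive everything that follows.

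For the image I would observe that the condition $t\rightharpoonup[\alpha_{1,2}^k]=[\alpha_{1,2}^{-k}]$ defining $M(G)^{-}$ becomes $k(2+2^{n-1})\equiv 0 \pmod{2^n}$; since $v_2(2+2^{n-1})=1$ for $n\geq 3$, this forces $k\equiv 0 \pmod{2^{n-1}}$, so $M(G)^{-}=\langle[\alpha_{1,2}^{2^{n-1}}]\rangle\cong\mathbb{Z}_2$. Because $t\rightharpoonup\alpha_{1,2}^{2^{n-1}}=\alpha_{1,2}^{2^{n-1}}$ (as $2^{2n-2}\equiv 0$) and $\alpha_{1,2}^{2^{n-1}}\alpha_{1,2}^{2^{n-1}}=\alpha_{1,2}^{2^{n}}={\rm triv}$, the dual cocycle $\tau_{1,2}^{2^{n-1}}$ satisfies property (\ref{fix}), hence lies in ${\rm Z}_c^{2}(kL,k^{G})$ and witnesses $[\alpha_{1,2}^{2^{n-1}}]\in{\rm Im}\,\Psi$; combined with ${\rm Im}\,\Psi\subseteq M(G)^{-}$ from Theorem \ref{psi} this gives ${\rm Im}\,\Psi=M(G)^{-}$.

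For the kernel I would compute $\hat G^L=\langle\beta_1\rangle\times\langle\beta_2^2\rangle$ from the action identities, and then evaluate the norm $N_t(\hat G)=\{\gamma\,(t\rightharpoonup\gamma)\mid\gamma\in\hat G\}$. Writing $\gamma=\beta_1^i\beta_2^j$ yields $\gamma\,(t\rightharpoonup\gamma)=\beta_1^{2i}\beta_2^{\,j(2+2^{n-1})}$. The decisive point, and the step I expect to be the main obstacle, is showing that the $\beta_2$-exponents $j(2+2^{n-1})$ sweep out all of $2\mathbb{Z}_{2^n}$: here one uses $2+2^{n-1}=2(1+2^{n-2})$ with $1+2^{n-2}$ a unit mod $2^n$, which is exactly where the hypothesis $n\geq 3$ is needed. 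This gives $N_t(\hat G)=\langle\beta_1^2\rangle\times\langle\beta_2^2\rangle$, whence Theorem \ref{psi} yields ${\rm Ker}\,\Psi\cong\hat G^L/N_t(\hat G)\cong\langle\beta_1\rangle/\langle\beta_1^2\rangle\cong\mathbb{Z}_2$, generated by $[(\sigma^{\beta_1},{\rm triv})]$. Applying part 2 of Proposition \ref{12_21} with $r=0$ and $s=1+2^{n-1}$ then rewrites this generator as $[(\sigma^{\beta_1},{\rm triv})]=[({\rm triv},\tau')]$, with $\tau'_t=\alpha'$ precisely the cocycle in the statement.

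To finish I would invoke the first isomorphism theorem for $\Psi$, giving $|{\rm H}^{2}(kL,k^{G})|=|{\rm Ker}\,\Psi|\cdot|{\rm Im}\,\Psi|=4$. The order-$2$ subgroup $\langle[({\rm triv},\tau_{1,2}^{2^{n-1}})]\rangle$ maps isomorphically onto ${\rm Im}\,\Psi$ under $\Psi$ (its nontrivial element has image $[\alpha_{1,2}^{2^{n-1}}]\neq 1$, so it meets ${\rm Ker}\,\Psi$ trivially), hence is a complement to ${\rm Ker}\,\Psi$. Therefore ${\rm H}^{2}(kL,k^{G})=\langle[(\sigma^{\beta_1},{\rm triv})]\rangle\times\langle[({\rm triv},\tau_{1,2}^{2^{n-1}})]\rangle\cong{\rm Ker}\,\Psi\times{\rm Im}\,\Psi\cong\mathbb{Z}_2\times\mathbb{Z}_2$, as claimed.
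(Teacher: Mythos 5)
Your proof is correct and follows essentially the same route as the paper: compute the action of $t$ on $\alpha_{1,2}$, $\beta_1$, $\beta_2$, identify ${\rm Im}\,\Psi = M(G)^{-}=\langle[\alpha_{1,2}^{2^{n-1}}]\rangle$ and ${\rm Ker}\,\Psi\cong\hat G^L/N_t(\hat G)=\langle[(\sigma^{\beta_1},{\rm triv})]\rangle$ via Theorem~\ref{psi}, rewrite the kernel generator using part 2 of Proposition~\ref{12_21}, and assemble the direct product. The only difference is that you make explicit several steps the paper leaves as assertions (the $2$-adic computation of $M(G)^{-}$, the norm computation showing $N_t(\hat G)=\langle\beta_1^2\rangle\times\langle\beta_2^2\rangle$, and the complement argument for the internal direct product), which is sound bookkeeping rather than a different method.
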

\begin{proof}
Since $t\rightharpoonup x = x$ and $ t\rightharpoonup y = y^{1+2^{n-1}}$,
\begin{eqnarray*}
t\rightharpoonup \alpha _{1,2} &=& \alpha_{1,2}^{ 1+2^{n-1}}\\
t\rightharpoonup \beta_{1}&=& \beta_{1}\\
t\rightharpoonup \beta_{2}&=& \beta_{2}^{ 1+2^{n-1}}
\end{eqnarray*}

Thus
\begin{itemize}
\item ${\rm Im} \Psi = M\left( G\right)^{-} =  \left\langle [\alpha^{2^{n-1}} _{1,2}] \right\rangle$.
\item $\hat G ^L = \left\langle \beta _1 \right\rangle \times \left\langle \beta^{2} _2 \right\rangle $ and $ N_t\left(\hat G\right) =\left\langle \beta^{2} _1 \right\rangle \times \left\langle \beta^{2} _2 \right\rangle $.
\item ${\rm Ker}  \Psi = \left\langle [(\sigma^{\beta _1} , {\rm triv})] \right\rangle \cong \mathbb{Z}_{2}$.
\end{itemize}

 By part 2 of Proposition \ref{12_21},  $[(\sigma^{\beta_1} , {\rm triv})] =  [( {\rm triv}, \tau '  )] $ where $\tau '_t =\alpha '$. Therefore
$$
{\rm H}^{2}\left( kL,k^{G}\right) = \left\langle [(\sigma^{\beta _1} , {\rm triv})] \right\rangle   \times\left\langle [({\rm triv},\tau^{2^{n-1}} _{1,2})] \right\rangle \cong \left\langle [\alpha '] \right\rangle\times \left\langle [\alpha^{2^{n-1}} _{1,2}] \right\rangle\cong \mathbb{Z}_{2} \times\mathbb{Z}_{2}.
$$
\end{proof}

\subsection{Case 6: $t\rightharpoonup x = x^{-1}, \quad t\rightharpoonup y = y^{-1+2^{n-1}}$}\quad

In this case we assume that $n\geq 3$, since for $n=2$ the matrix corresponding to the action of $t$ is similar to the matrix from Case 7.

 \begin{proposition} In this case
 $$
 {\rm H}^{2}\left( kL,k^{G}\right)\cong {\rm Ker}  \Psi \times {\rm Im} \Psi \cong \left\langle \alpha^{2^{n-1}} _{1,1} \right\rangle \times\left\langle \alpha^{2^{n-1}} _{1,2} \right\rangle  \cong \mathbb{Z}_{2} \times\mathbb{Z}_{2}.
 $$
\end{proposition}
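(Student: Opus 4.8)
The plan is to follow the template of Cases 1--5: I would compute the induced action of $t$ on the generator $\alpha_{1,2}$ of $M(G)$ and on the characters $\beta_1,\beta_2$ spanning $\hat G$, read off ${\rm Im}\,\Psi=M(G)^-$ and ${\rm Ker}\,\Psi\cong\hat G^L/N_t(\hat G)$ from Theorem~\ref{psi}, identify explicit cocycle representatives via Proposition~\ref{12_21}, and then assemble the direct product.

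First I would record the actions. Substituting $t\rightharpoonup x=x^{-1}$ and $t\rightharpoonup y=y^{-1+2^{n-1}}$ into the defining formulas and using $\xi^{2^{n-1}}=-1$ gives
\[
t\rightharpoonup\alpha_{1,2}=\alpha_{1,2}^{1+2^{n-1}},\qquad t\rightharpoonup\beta_1=\beta_1^{-1},\qquad t\rightharpoonup\beta_2=\beta_2^{-1+2^{n-1}}.
\]
For the image, the condition $[\alpha_{1,2}^k]\,(t\rightharpoonup[\alpha_{1,2}^k])=1$ defining $M(G)^-$ reduces to $k(2+2^{n-1})\equiv 0\pmod{2^n}$; since $2+2^{n-1}=2(1+2^{n-2})$ with $1+2^{n-2}$ odd for $n\geq 3$, this forces $k\equiv 0\pmod{2^{n-1}}$, so $M(G)^-=\langle[\alpha_{1,2}^{2^{n-1}}]\rangle\cong\mathbb{Z}_2$. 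To upgrade the inclusion ${\rm Im}\,\Psi\subseteq M(G)^-$ of Theorem~\ref{psi} to an equality I would verify that $\tau_{1,2}^{2^{n-1}}$ satisfies the fixing condition (\ref{fix}): since $\alpha_{1,2}^{2^{n-1}}(g,h)=(-1)^{a_1b_2}$ is itself $t$-invariant, $[({\rm triv},\tau_{1,2}^{2^{n-1}})]$ is a genuine class of ${\rm H}_c^2(kL,k^G)$ whose $\Psi$-image generates $M(G)^-$.

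For the kernel I would read off $\hat G^L$ and $N_t(\hat G)$ from the two character actions. The relations $t\rightharpoonup\beta_1=\beta_1^{-1}$ and $t\rightharpoonup\beta_2=\beta_2^{-1+2^{n-1}}$ give $\hat G^L=\langle\beta_1^{2^{n-1}}\rangle\times\langle\beta_2^{2^{n-1}}\rangle\cong\mathbb{Z}_2\times\mathbb{Z}_2$, while $\gamma\,(t\rightharpoonup\gamma)=\beta_2^{2^{n-1}c_2}$ for $\gamma=\beta_1^{c_1}\beta_2^{c_2}$ shows $N_t(\hat G)=\langle\beta_2^{2^{n-1}}\rangle$. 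Hence ${\rm Ker}\,\Psi\cong\hat G^L/N_t(\hat G)\cong\langle\beta_1^{2^{n-1}}\rangle\cong\mathbb{Z}_2$, with generator $[(\sigma^{\beta_1^{2^{n-1}}},{\rm triv})]$. Because the present action has $t\rightharpoonup x=x^{-1}$ and $t\rightharpoonup y=y^{s}$ with $s=-1+2^{n-1}$, part 4 of Proposition~\ref{12_21} applies and rewrites this generator as $[({\rm triv},\tau_{1,1}^{2^{n-1}})]$.

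Finally, the short exact sequence relating ${\rm Ker}\,\Psi$, ${\rm H}^2(kL,k^G)$ and ${\rm Im}\,\Psi$ through $\Psi$ splits, because $[({\rm triv},\tau_{1,2}^{2^{n-1}})]$ is an element of order two mapping onto the generator of ${\rm Im}\,\Psi=M(G)^-$; as the groups are abelian this gives ${\rm H}^2(kL,k^G)\cong{\rm Ker}\,\Psi\times{\rm Im}\,\Psi\cong\langle\alpha_{1,1}^{2^{n-1}}\rangle\times\langle\alpha_{1,2}^{2^{n-1}}\rangle\cong\mathbb{Z}_2\times\mathbb{Z}_2$, in accordance with Theorem~\ref{cohomology}. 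I expect the only delicate points to be the parity and divisibility bookkeeping---where the standing hypothesis $n\geq 3$ keeps $1+2^{n-2}$ odd---and checking that part 4 of Proposition~\ref{12_21} applies verbatim despite the $2^{n-1}$-twist in the exponent of $y$.
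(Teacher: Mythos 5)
Your proposal is correct and follows essentially the same route as the paper's proof of this case (Case 6, $t\rightharpoonup x=x^{-1}$, $t\rightharpoonup y=y^{-1+2^{n-1}}$ with $n\geq 3$): compute the action on $\alpha_{1,2},\beta_1,\beta_2$, identify ${\rm Im}\,\Psi=M(G)^-=\left\langle [\alpha_{1,2}^{2^{n-1}}]\right\rangle$ and ${\rm Ker}\,\Psi\cong\hat G^L/N_t(\hat G)=\left\langle [(\sigma^{\beta_1^{2^{n-1}}},{\rm triv})]\right\rangle$, and rewrite the kernel generator via part 4 of Proposition~\ref{12_21}. The additional details you supply --- the divisibility argument pinning down $M(G)^-$, the check that $\alpha_{1,2}^{2^{n-1}}$ satisfies the cocycle-level fixing condition so that the inclusion ${\rm Im}\,\Psi\subseteq M(G)^-$ is an equality, and the explicit order-two splitting --- are exactly the steps the paper leaves implicit, and they are all correct.
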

\begin{proof}
Since $t\rightharpoonup x = x^{-1}$ and $ t\rightharpoonup y = y^{-1+2^{n-1}}$,
\begin{eqnarray*}
t\rightharpoonup \alpha _{1,2} &=& \alpha_{1,2}^{ 1+2^{n-1}}\\
t\rightharpoonup \beta_{1}&=& \beta_{1}^{ -1}\\
t\rightharpoonup \beta_{2}&=& \beta_{2}^{ -1+2^{n-1}}
\end{eqnarray*}
Thus
\begin{itemize}
\item ${\rm Im} \Psi = M\left( G\right)^{-} = \left\langle [\alpha^{2^{n-1}} _{1,2}] \right\rangle $.
\item $\hat G ^L = \left\langle \beta^{2^{n-1}} _1 \right\rangle \times \left\langle \beta^{2^{n-1}} _2 \right\rangle $ and $ N_t\left(\hat G\right) = \left\langle \beta^{2^{n-1}} _2 \right\rangle $.
\item ${\rm Ker}  \Psi = \left\langle [(\sigma^{\beta^{2^{n-1}} _1} , {\rm triv})] \right\rangle \cong \mathbb{Z}_{2}$.
 \end{itemize}
 By part 4 of Proposition \ref{12_21},   $[(\sigma^{\beta^{2^{n-1}} _1} , {\rm triv})] =  [( {\rm triv}, \tau ^{2^{n-1}}_{1,1} )] $. Thus
$$
{\rm Ker}  \Psi = \left\langle [( {\rm triv}, \tau ^{2^{n-1}}_{1,1} )]  \right\rangle  \cong
\left\langle \alpha^{2^{n-1}} _{1,1} \right\rangle
$$
and
$$
 {\rm H}^{2}\left( kL,k^{G}\right) \cong
 {\rm H}_{c}^{2}\left( kL,k^{G}\right) \cong \left\langle \alpha^{2^{n-1}} _{1,1} \right\rangle \times\left\langle \alpha^{2^{n-1}} _{1,2} \right\rangle  \cong \mathbb{Z}_{2} \times\mathbb{Z}_{2} .
 $$
\end{proof}


\subsection{Case 7: $t\rightharpoonup x = x \quad
t\rightharpoonup y = y^{-1}$}\quad

\begin{proposition}\label{1-1} In this case
\begin{eqnarray*}
 {\rm H}^{2}\left( kL,k^{G}\right) &=& \left\langle [(\sigma^{\beta _1} , {\rm triv})] \right\rangle \times \left\langle [(\sigma^{\beta^{2^{n-1}} _2} , {\rm triv})]\right\rangle  \times\left\langle [({\rm triv}, \tau _{1,2})] \right\rangle \cong {\rm Ker}  \Psi \times {\rm Im} \Psi\\
 &\cong & \left\langle \alpha ' \right\rangle\times \left\langle \alpha^{2^{n-1}} _{2,2} \right\rangle \times \left\langle \alpha _{1,2} \right\rangle \cong \mathbb{Z}_{2} \times\mathbb{Z}_{2} \times \mathbb{Z}_{2^{n}}
 \end{eqnarray*}
  for $\alpha '$ defined via
$$
\alpha ' \left( x^{a_{1}}y^{a_{2}},x^{b_{1}}y^{b_{2}}\right) =\left( -1\right)
^{k}
$$
where $k= 0$ if $0\leq a_1+b_1 < 2^{n}$ and $k = 1$ if $2^{n}\leq a_1+b_1 < 2^{n+1}$.
\end{proposition}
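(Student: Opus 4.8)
The plan is to follow the template of the preceding cases, using Theorem \ref{psi} to pin down ${\rm Ker}\,\Psi$ and ${\rm Im}\,\Psi$ separately and then to splice them into an internal direct product. First I would record the induced action on the relevant cocycle and characters. Since $t\rightharpoonup x=x$ and $t\rightharpoonup y=y^{-1}$, a direct evaluation gives $t\rightharpoonup\alpha_{1,2}=\alpha_{1,2}^{-1}$ (literally as cocycles, not merely in $M(G)$), together with $t\rightharpoonup\beta_1=\beta_1$ and $t\rightharpoonup\beta_2=\beta_2^{-1}$. Consequently $t\rightharpoonup[\alpha_{1,2}]=[\alpha_{1,2}]^{-1}$, so $M(G)^-=M(G)=\langle[\alpha_{1,2}]\rangle\cong\mathbb{Z}_{2^n}$.

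Next I would show ${\rm Im}\,\Psi=M(G)^-$. By Theorem \ref{psi}(2) we have ${\rm Im}\,\Psi\subseteq M(G)^-$; for the reverse inclusion it suffices to exhibit a genuine element of ${\rm H}_c^2(kL,k^G)$ mapping onto the generator $[\alpha_{1,2}]$. The key observation is that $t\rightharpoonup\alpha_{1,2}=\alpha_{1,2}^{-1}$ gives $\alpha_{1,2}\,(t\rightharpoonup\alpha_{1,2})={\rm triv}$, i.e. $\alpha_{1,2}$ already satisfies condition (\ref{fix}); hence $[({\rm triv},\tau_{1,2})]\in{\rm H}_c^2(kL,k^G)$ with $\Psi([({\rm triv},\tau_{1,2})])=[\alpha_{1,2}]$. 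Since $\alpha_{1,2}^{2^n}={\rm triv}$, this class has order exactly $2^n$, so $\langle[({\rm triv},\tau_{1,2})]\rangle\cong\mathbb{Z}_{2^n}$ maps isomorphically onto ${\rm Im}\,\Psi$ and meets ${\rm Ker}\,\Psi$ trivially.

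For the kernel, Theorem \ref{psi}(3) gives ${\rm Ker}\,\Psi\cong\hat G^L/N_t(\hat G)$. From the action above, $\hat G^L=\langle\beta_1\rangle\times\langle\beta_2^{2^{n-1}}\rangle$, while $N_t(\hat G)=\{\gamma\,(t\rightharpoonup\gamma)\mid\gamma\in\hat G\}=\langle\beta_1^2\rangle$, so the quotient is $\mathbb{Z}_2\times\mathbb{Z}_2$, with representatives $[(\sigma^{\beta_1},{\rm triv})]$ and $[(\sigma^{\beta_2^{2^{n-1}}},{\rm triv})]$. Applying part 2 of Proposition \ref{12_21} with $r=0,s=-1$ rewrites the first as $[({\rm triv},\tau')]$, and part 3 of Proposition \ref{12_21} with $q=1,r=0$ rewrites the second as $[({\rm triv},\tau_{2,2}^{2^{n-1}})]$.

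Finally I would assemble the decomposition. By Remark \ref{trivial}, ${\rm H}^2(kL,k^G)={\rm H}_c^2(kL,k^G)$, and $\Psi$ yields the exact sequence $1\to{\rm Ker}\,\Psi\to{\rm H}_c^2(kL,k^G)\to{\rm Im}\,\Psi\to 1$. The cyclic subgroup $\langle[({\rm triv},\tau_{1,2})]\rangle$ is a complement to ${\rm Ker}\,\Psi$ (trivial intersection together with surjectivity onto ${\rm Im}\,\Psi$), and since all groups are abelian the split sequence is an internal direct product, giving the stated $\langle[(\sigma^{\beta_1},{\rm triv})]\rangle\times\langle[(\sigma^{\beta_2^{2^{n-1}}},{\rm triv})]\rangle\times\langle[({\rm triv},\tau_{1,2})]\rangle\cong\mathbb{Z}_2\times\mathbb{Z}_2\times\mathbb{Z}_{2^n}$. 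The main obstacle, and the feature distinguishing Case 7 from Cases 2--6, is securing the full $\mathbb{Z}_{2^n}$ factor in the image: this rests entirely on $\alpha_{1,2}$ itself (and not merely $\alpha_{1,2}^{2^{n-1}}$) satisfying (\ref{fix}), which is precisely what the pairing of $t\rightharpoonup y=y^{-1}$ with $t\rightharpoonup x=x$ delivers.
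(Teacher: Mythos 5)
Your proof is correct and takes essentially the same route as the paper: compute the action on $\alpha_{1,2}$, $\beta_{1}$, $\beta_{2}$; get ${\rm Im}\,\Psi = M(G)^{-}=M(G)$ from the fact that $\alpha_{1,2}$ itself satisfies (\ref{fix}) (so $({\rm triv},\tau_{1,2})$ is a genuine cocycle hitting the generator); compute ${\rm Ker}\,\Psi \cong \hat G^{L}/N_{t}(\hat G)\cong \mathbb{Z}_{2}\times\mathbb{Z}_{2}$; and rewrite the kernel generators via Proposition \ref{12_21} before assembling the direct product. One small point in your favor: your citation of parts 2 and 3 of Proposition \ref{12_21} is the right one, whereas the paper's proof cites ``parts 2 and 4''---apparently a typo, since part 4 requires $t\rightharpoonup x = x^{-1}$, which fails here, while part 3 with $q=1$, $r=0$ is exactly what gives $[(\sigma^{\beta_{2}^{2^{n-1}}},{\rm triv})]=[({\rm triv},\tau^{2^{n-1}}_{2,2})]$.
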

\begin{proof}
Since $t\rightharpoonup x = x $ and $
t\rightharpoonup y = y^{-1}$,
\begin{eqnarray*}
t\rightharpoonup \alpha _{1,2} &=& \alpha^{-1}_{1,2}\\
t\rightharpoonup \beta_{1}&=& \beta_{1}\\
t\rightharpoonup \beta_{2}&=& \beta_{2}^{-1}
\end{eqnarray*}

Thus
\begin{itemize}
\item ${\rm Im} \Psi = M\left( G\right)$.
\item $\hat G ^L = \left\langle \beta _1 \right\rangle \times \left\langle \beta^{2^{n-1}} _2 \right\rangle $ and $ N_t\left(\hat G\right) = \left\langle \beta^2 _1 \right\rangle $.
\item ${\rm Ker}  \Psi = \left\langle [(\sigma^{\beta _1} , {\rm triv})] \right\rangle \times \left\langle [(\sigma^{\beta^{2^{n-1}} _2} , {\rm triv})] \right\rangle \cong \mathbb{Z}_{2} \times \mathbb{Z}_{2}$.
\item ${\rm H}^{2}\left( kL,k^{G}\right) = \left\langle [(\sigma^{\beta _1} , {\rm triv})] \right\rangle \times \left\langle [(\sigma^{\beta^{2^{n-1}} _2} , {\rm triv})]\right\rangle  \times\left\langle [({\rm triv}, \tau _{1,2})] \right\rangle$.
 \end{itemize}

By parts 2 and 4 of Proposition \ref{12_21}, $[(\sigma^{\beta^{2^{n-1}} _2} , {\rm triv})] =  [( {\rm triv}, \tau ^{2^{n-1}}_{2,2} )] $ and $[(\sigma^{\beta_1} , {\rm triv})] =  [( {\rm triv}, \tau '  )] $ where $\tau '_t =\alpha '$. Therefore
$$
 {\rm H}^{2}\left( kL,k^{G}\right) \cong
 {\rm H}_{c}^{2}\left( kL,k^{G}\right)\cong\left\langle \alpha ' \right\rangle\times \left\langle \alpha^{2^{n-1}} _{2,2} \right\rangle \times \left\langle \alpha _{1,2} \right\rangle \cong \mathbb{Z}_{2} \times\mathbb{Z}_{2} \times \mathbb{Z}_{2^{n}}.
 $$

\end{proof}

\subsection{Case 8: $t\rightharpoonup x = x^{1+2^{n-1}} \quad
t\rightharpoonup y = y^{-1+2^{n-1}}$}\quad

In this case we assume that $n\geq 3$, since for $n=2$ the matrix corresponding to the action of $t$ is similar to the matrix from Case 7.

\begin{proposition} In this case
 $$
 {\rm H}^{2}\left( kL,k^{G}\right) \cong  {\rm Im} \Psi\cong \left\langle \alpha _{1,2} \right\rangle  \cong \mathbb{Z}^{2^{n}}_{2}.
 $$
\end{proposition}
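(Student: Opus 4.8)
The plan is to follow the template established in the preceding cases: compute how $t$ acts on the generator $\alpha_{1,2}$ of $M(G)$ and on the generators $\beta_1,\beta_2$ of $\hat G$, read off ${\rm Im}\,\Psi$ and ${\rm Ker}\,\Psi$ from Theorem~\ref{psi}, and assemble the answer. First I would record the arithmetic identity $(1+2^{n-1})(-1+2^{n-1})=-1+2^{2n-2}\equiv -1\pmod{2^n}$, valid since $2n-2\geq n$ for $n\geq 2$. Using $t\rightharpoonup x=x^{1+2^{n-1}}$ and $t\rightharpoonup y=y^{-1+2^{n-1}}$ together with this identity, a direct substitution into the definitions gives $t\rightharpoonup\alpha_{1,2}=\alpha_{1,2}^{-1}$, $t\rightharpoonup\beta_1=\beta_1^{1+2^{n-1}}$, and $t\rightharpoonup\beta_2=\beta_2^{-1+2^{n-1}}$.

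From $t\rightharpoonup\alpha_{1,2}=\alpha_{1,2}^{-1}$ we get $[\alpha_{1,2}]\in M(G)^-$, so $M(G)^-=M(G)=\langle[\alpha_{1,2}]\rangle$. The key observation -- and the feature that makes this case lighter than Case~1 (Proposition~\ref{xy}) -- is that the identity $t\rightharpoonup\alpha_{1,2}=\alpha_{1,2}^{-1}$ holds \emph{pointwise} and not merely up to a coboundary. Hence $\alpha_{1,2}\,(t\rightharpoonup\alpha_{1,2})=1$ identically, which is exactly property~(\ref{fix}); therefore $\tau_{1,2}\in {\rm Z}^2_c(kL,k^G)$ and $\Psi([\tau_{1,2}])=[\alpha_{1,2}]$. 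Since $[\alpha_{1,2}]$ generates $M(G)$ and ${\rm Im}\,\Psi$ is a subgroup of $M(G)^-=M(G)$ by part~2 of Theorem~\ref{psi}, we conclude ${\rm Im}\,\Psi=M(G)\cong\mathbb{Z}_{2^n}$, with no coboundary adjustment needed.

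Next I would compute the kernel through $\hat G^L$ and $N_t(\hat G)$. A group-like $\beta_1^k$ is fixed iff $k\cdot 2^{n-1}\equiv 0\pmod{2^n}$, i.e. $k$ even, while $\beta_2^l$ is fixed iff $l(-2+2^{n-1})\equiv 0$, i.e. $2^{n-1}\mid l$ (using that $-1+2^{n-2}$ is odd for $n\geq 3$); thus $\hat G^L=\langle\beta_1^2\rangle\times\langle\beta_2^{2^{n-1}}\rangle$. For $\gamma=\beta_1^k\beta_2^l$ one has $\gamma\,(t\rightharpoonup\gamma)=\beta_1^{k(2+2^{n-1})}\beta_2^{l\cdot 2^{n-1}}$, and since $2+2^{n-1}=2(1+2^{n-2})$ with $1+2^{n-2}$ odd, the exponents range over exactly $\langle\beta_1^2\rangle\times\langle\beta_2^{2^{n-1}}\rangle$. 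Hence $N_t(\hat G)=\hat G^L$ and ${\rm Ker}\,\Psi\cong\hat G^L/N_t(\hat G)=\{1\}$ by part~3 of Theorem~\ref{psi}.

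Combining these, Theorem~\ref{psi} yields ${\rm H}^2(kL,k^G)\cong {\rm H}^2_c(kL,k^G)\cong{\rm Ker}\,\Psi\times{\rm Im}\,\Psi\cong{\rm Im}\,\Psi\cong\langle\alpha_{1,2}\rangle\cong\mathbb{Z}_{2^n}$. No step presents a real obstacle; the only points requiring care are the congruences modulo $2^n$ -- in particular the pointwise (not merely cohomological) vanishing in (\ref{fix}), which is what lets $\alpha_{1,2}$ itself serve as the needed representative -- and the verification that $N_t(\hat G)$ exhausts $\hat G^L$ so that the kernel collapses.
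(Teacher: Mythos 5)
Your proof is correct and follows essentially the same route as the paper: compute $t\rightharpoonup\alpha_{1,2}=\alpha_{1,2}^{-1}$, $t\rightharpoonup\beta_1=\beta_1^{1+2^{n-1}}$, $t\rightharpoonup\beta_2=\beta_2^{-1+2^{n-1}}$, deduce ${\rm Im}\,\Psi=M(G)=\langle[\alpha_{1,2}]\rangle$ and $\hat G^L=N_t(\hat G)=\langle\beta_1^2\rangle\times\langle\beta_2^{2^{n-1}}\rangle$, hence ${\rm Ker}\,\Psi=\{1\}$, and conclude ${\rm H}^2(kL,k^G)\cong\mathbb{Z}_{2^n}$. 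You actually supply slightly more justification than the paper does at the one point it leaves implicit (that the pointwise identity $\alpha_{1,2}\,(t\rightharpoonup\alpha_{1,2})=1$ makes $\tau_{1,2}$ itself a lift satisfying property~(\ref{fix}), so no coboundary correction is needed), which is a welcome clarification rather than a deviation.
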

\begin{proof}
Since $t\rightharpoonup x = x^{1+2^{n-1}} $ and $
t\rightharpoonup y = y^{-1+2^{n-1}}$,
\begin{eqnarray*}
t\rightharpoonup \alpha _{1,2} &=& \alpha^{-1}_{1,2}\\
t\rightharpoonup \beta_{1}&=& \beta^{1+2^{n-1}}_{1}\\
t\rightharpoonup \beta_{2}&=& \beta_{2}^{-1+2^{n-1}}
\end{eqnarray*}
Thus
\begin{itemize}
\item ${\rm Im} \Psi = M\left( G\right)$.
\item $\hat G ^L = \left\langle \beta^2 _1 \right\rangle \times \left\langle \beta^{2^{n-1}} _2 \right\rangle $ and $ N_t\left(\hat G\right) = \left\langle \beta^2 _1 \right\rangle \times \left\langle \beta^{2^{n-1}} _2 \right\rangle $.
\item ${\rm Ker}  \Psi = \{1\}$.
 \end{itemize}
Therefore
$$
{\rm H}^{2}\left( kL,k^{G}\right) \cong
 {\rm H}_{c}^{2}\left( kL,k^{G}\right) \cong \left\langle \alpha _{1,2} \right\rangle  \cong  \mathbb{Z}^{2^{n}}_{2}.
 $$
 \end{proof}
\subsection{Case 9: $t\rightharpoonup x = x \quad
t\rightharpoonup y = y^{-1+2^{n-1}}$}\quad

In this case we assume that $n\geq 3$, since for $n=2$ this action is trivial.

\begin{proposition} In this case
  $$
{\rm H}^{2}\left( kL,k^{G}\right) = \left\langle [(\sigma^{\beta _1} , {\rm triv})] \right\rangle   \times\left\langle [({\rm triv},\tau^{2} _{1,2})] \right\rangle\cong {\rm Ker}  \Psi \times {\rm Im} \Psi \cong \left\langle \alpha ' \right\rangle\times \left\langle \alpha^{2} _{1,2} \right\rangle\cong \mathbb{Z}_{2} \times\mathbb{Z}_{2^{n-1}}
$$
for $\alpha '$ defined via
$$
\alpha ' \left( x^{a_{1}}y^{a_{2}},x^{b_{1}}y^{b_{2}}\right) =\left( -1\right)
^{k}
$$
where $k= 0$ if $0\leq a_1+b_1 < 2^{n}$ and $k = 1$ if $2^{n}\leq a_1+b_1 < 2^{n+1}$.
\end{proposition}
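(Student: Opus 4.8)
The plan is to follow the same template as Cases 5--8: determine the induced $L$-action on the generators $\alpha_{1,2}$, $\beta_1$, $\beta_2$, read off ${\rm Im}\,\Psi$, $\hat G^L$ and $N_t(\hat G)$ from Theorem \ref{psi}, and then render the abstract generators as explicit dual cocycles by invoking Proposition \ref{12_21}.

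First I would substitute $t\rightharpoonup x=x$ and $t\rightharpoonup y=y^{-1+2^{n-1}}$ into the defining formulas to obtain
$$
t\rightharpoonup\alpha_{1,2}=\alpha_{1,2}^{-1+2^{n-1}},\qquad t\rightharpoonup\beta_1=\beta_1,\qquad t\rightharpoonup\beta_2=\beta_2^{-1+2^{n-1}}.
$$
From the first relation, $[\alpha_{1,2}^k]\bigl(t\rightharpoonup[\alpha_{1,2}^k]\bigr)=[\alpha_{1,2}^{k2^{n-1}}]$, which is trivial in $M(G)\cong\mathbb{Z}_{2^{n}}$ precisely when $k$ is even; combined with the realization $\Psi([({\rm triv},\tau_{1,2}^2)])=[\alpha_{1,2}^2]$ established in the last step, this gives ${\rm Im}\,\Psi=M(G)^{-}=\langle[\alpha_{1,2}^2]\rangle\cong\mathbb{Z}_{2^{n-1}}$.

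Next I would compute ${\rm Ker}\,\Psi$ using part 3 of Theorem \ref{psi}. Since $\beta_1$ is fixed and $\beta_2^m$ is fixed iff $m(2^{n-1}-2)\equiv0\pmod{2^{n}}$, and since $2^{n-1}-2=2(2^{n-2}-1)$ with $2^{n-2}-1$ odd (here the hypothesis $n\geq3$ enters), this forces $2^{n-1}\mid m$, so $\hat G^L=\langle\beta_1\rangle\times\langle\beta_2^{2^{n-1}}\rangle$. A short computation of $\gamma\,(t\rightharpoonup\gamma)=\beta_1^{2l}\beta_2^{m2^{n-1}}$ for $\gamma=\beta_1^l\beta_2^m$ yields $N_t(\hat G)=\langle\beta_1^2\rangle\times\langle\beta_2^{2^{n-1}}\rangle$, whence ${\rm Ker}\,\Psi\cong\hat G^L/N_t(\hat G)\cong\mathbb{Z}_2$, generated by $[(\sigma^{\beta_1},{\rm triv})]$.

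Finally, I would note that $t\rightharpoonup x=x=xy^0$ and $t\rightharpoonup y=y^{-1+2^{n-1}}$ satisfy the hypotheses of part 2 of Proposition \ref{12_21} with $r=0$ and $s=-1+2^{n-1}$, so that $[(\sigma^{\beta_1},{\rm triv})]=[({\rm triv},\tau')]$ with $\tau'_t=\alpha'$; moreover $[({\rm triv},\tau_{1,2}^2)]$ lies in ${\rm H}^2_c(kL,k^G)$ (since $(\tau_{1,2}^2)_t$ satisfies (\ref{fix}), as $\alpha_{1,2}^{2\cdot2^{n-1}}$ is trivial) and maps under $\Psi$ onto the generator $[\alpha_{1,2}^2]$ of ${\rm Im}\,\Psi$. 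Feeding these generators into the splitting ${\rm H}^2(kL,k^G)\cong{\rm Ker}\,\Psi\times{\rm Im}\,\Psi$ of Theorem \ref{cohomology} produces the claimed isomorphism with $\langle\alpha'\rangle\times\langle\alpha_{1,2}^2\rangle\cong\mathbb{Z}_2\times\mathbb{Z}_{2^{n-1}}$. The only delicate step is the modular bookkeeping pinning down $\hat G^L$ and $N_t(\hat G)$, which is exactly where $n\geq3$ (ensuring $2^{n-2}-1$ is a unit modulo $2^{n}$) is indispensable; everything else is routine substitution into the formulas of Section \ref{2m}.
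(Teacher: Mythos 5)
Your proof is correct and follows essentially the same route as the paper's: compute the induced action on $\alpha_{1,2}$, $\beta_1$, $\beta_2$; identify ${\rm Im}\,\Psi = M(G)^- = \left\langle [\alpha_{1,2}^2]\right\rangle$ via the explicit dual cocycle $\tau_{1,2}^2$; compute $\hat G^L$ and $N_t(\hat G)$ to get ${\rm Ker}\,\Psi \cong \mathbb{Z}_2$ generated by $[(\sigma^{\beta_1},{\rm triv})]$; and invoke part 2 of Proposition \ref{12_21} to rewrite that generator as $[({\rm triv},\tau')]$ with $\tau'_t=\alpha'$. (Incidentally, your computation $t\rightharpoonup\beta_1=\beta_1$ is the correct one; the line $t\rightharpoonup\beta_1=\beta_1^{1+2^{n-1}}$ in the paper's proof of this case is a typo, inconsistent with its own claim $\hat G^L=\left\langle\beta_1\right\rangle\times\left\langle\beta_2^{2^{n-1}}\right\rangle$, and you also make explicit the verification of condition (\ref{fix}) for $\tau_{1,2}^2$, which the paper leaves implicit.) The one blemish is your final step: you appeal to "the splitting of Theorem \ref{cohomology}," but that theorem is proved precisely by these sixteen case-by-case propositions, so citing it here is circular. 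The repair is immediate from facts you already have: $[({\rm triv},\tau_{1,2}^2)]$ has order exactly $2^{n-1}$ (its image under $\Psi$ has order $2^{n-1}$, while $\tau_{1,2}^{2^{n}}$ is the identically trivial cocycle because $\xi^{2^n}=1$), so $\left\langle [({\rm triv},\tau_{1,2}^2)]\right\rangle$ maps isomorphically onto ${\rm Im}\,\Psi$ and meets ${\rm Ker}\,\Psi$ trivially; since the exact sequence of Theorem \ref{psi} gives $\left|{\rm H}^{2}(kL,k^{G})\right| = \left|{\rm Ker}\,\Psi\right|\cdot\left|{\rm Im}\,\Psi\right| = 2^n$, the two cyclic subgroups form an internal direct product, which is exactly what the paper's terse "Therefore" amounts to.
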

\begin{proof}
Since  $t\rightharpoonup x = x$ and
$t\rightharpoonup y = y^{-1+2^{n-1}}$,
\begin{eqnarray*}
t\rightharpoonup \alpha _{1,2} &=& \alpha^{-1+2^{n-1}}_{1,2}\\
t\rightharpoonup \beta_{1}&=& \beta^{1+2^{n-1}}_{1}\\
t\rightharpoonup \beta_{2}&=& \beta_{2}^{-1+2^{n-1}}
\end{eqnarray*}

Thus
\begin{itemize}
\item ${\rm Im} \Psi = M\left( G\right)^-=\left\langle [\alpha _{1,2}] ^{2}\right\rangle$.
\item $\hat G ^L = \left\langle \beta _1 \right\rangle \times \left\langle \beta^{2^{n-1}} _2 \right\rangle $ and $ N_t\left(\hat G\right) = \left\langle \beta^2 _1 \right\rangle \times \left\langle \beta^{2^{n-1}} _2 \right\rangle $.
\item ${\rm Ker}  \Psi = \left\langle [(\sigma^{\beta _1} , {\rm triv})] \right\rangle \cong \mathbb{Z}_{2} $.
 \end{itemize}

 By part 2 of Proposition \ref{12_21},   $[(\sigma^{\beta_1} , {\rm triv})] =  [( {\rm triv}, \tau '  )] $ where $\tau '_t =\alpha '$.
Therefore
 $$
{\rm H}^{2}\left( kL,k^{G}\right) = \left\langle [(\sigma^{\beta _1} , {\rm triv})] \right\rangle   \times\left\langle [({\rm triv},\tau^{2} _{1,2})] \right\rangle \cong \left\langle \alpha ' \right\rangle\times \left\langle \alpha^{2} _{1,2} \right\rangle\cong \mathbb{Z}_{2} \times\mathbb{Z}_{2^{n-1}}.
$$
 \end{proof}

\subsection{Case 10: $t\rightharpoonup x = x^{1+2^{n-1}} \quad
t\rightharpoonup y = y^{-1}$}\quad

In this case we assume that $n\geq 3$, since for $n=2$ the matrix corresponding to the action of $t$ is equal to the matrix from Case 2. Then

\begin{proposition} In this case
  $$
{\rm H}^{2}\left( kL,k^{G}\right) \cong {\rm Ker}  \Psi \times {\rm Im} \Psi\cong \left\langle \alpha^{2^{n-1}} _{2,2} \right\rangle \times\left\langle \alpha^{2} _{1,2} \right\rangle  \cong \mathbb{Z}_{2} \times\mathbb{Z}_{2}^{2^{n-1}} .
$$
\end{proposition}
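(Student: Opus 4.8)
The plan is to follow exactly the template established in Cases 1--9: first compute the induced $L$-action on the characters $\beta_1,\beta_2$ and on the cocycle $\alpha_{1,2}$, then read off ${\rm Im}\,\Psi$ and ${\rm Ker}\,\Psi$ separately, and finally glue the two pieces into a direct product. Here $t\rightharpoonup x = x^{1+2^{n-1}}$ and $t\rightharpoonup y = y^{-1}$ (with $n\geq 3$), so using $\beta_1(x^ay^b)=\xi^a$, $\beta_2(x^ay^b)=\xi^b$ and $\alpha_{1,2}(x^{a_1}y^{a_2},x^{b_1}y^{b_2})=\xi^{a_1b_2}$ a direct substitution gives
\begin{eqnarray*}
t\rightharpoonup \alpha _{1,2} &=& \alpha^{-1-2^{n-1}}_{1,2}\\
t\rightharpoonup \beta_{1}&=& \beta^{1+2^{n-1}}_{1}\\
t\rightharpoonup \beta_{2}&=& \beta_{2}^{-1}
\end{eqnarray*}

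To determine ${\rm Im}\,\Psi$ I would use $M(G)=\langle[\alpha_{1,2}]\rangle\cong\mathbb{Z}_{2^n}$ together with Theorem \ref{psi}(2), which gives the inclusion ${\rm Im}\,\Psi\subseteq M(G)^-$. Solving $t\rightharpoonup[\alpha_{1,2}]^k=[\alpha_{1,2}]^{-k}$ reduces to $k\,2^{n-1}\equiv 0\pmod{2^n}$, i.e. $k$ even, so $M(G)^-=\langle[\alpha_{1,2}^2]\rangle\cong\mathbb{Z}_{2^{n-1}}$. For the reverse inclusion I would verify the fixing property (\ref{fix}) for $\alpha_{1,2}^2$ on the nose, namely $\alpha_{1,2}^2(g,h)\,\alpha_{1,2}^2(t\rightharpoonup g,t\rightharpoonup h)=\xi^{2a_1b_2}\xi^{-2a_1b_2}=1$, so that $\tau^2_{1,2}\in{\rm Z}_c^2(kL,k^G)$ and $\Psi([({\rm triv},\tau^2_{1,2})])=[\alpha_{1,2}^2]$. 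Hence ${\rm Im}\,\Psi=M(G)^-=\langle[\alpha_{1,2}^2]\rangle$.

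For ${\rm Ker}\,\Psi\cong\hat G^L/N_t(\hat G)$ I would compute both subgroups from the action above. Requiring $\beta_1^a\beta_2^b$ to be $L$-fixed forces $a$ even and $b\in\{0,2^{n-1}\}$, giving $\hat G^L=\langle\beta_1^2\rangle\times\langle\beta_2^{2^{n-1}}\rangle$; and for $\gamma=\beta_1^a\beta_2^b$ one finds $\gamma\,(t\rightharpoonup\gamma)=\beta_1^{a(2+2^{n-1})}$, so $N_t(\hat G)=\langle\beta_1^{2+2^{n-1}}\rangle=\langle\beta_1^2\rangle$, using $\gcd(2+2^{n-1},2^n)=2$ for $n\geq 3$. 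The quotient is therefore $\langle\beta_2^{2^{n-1}}\rangle\cong\mathbb{Z}_2$, and part 3 of Proposition \ref{12_21} (applied with $q=1+2^{n-1}$ and $r=0$) identifies its generator as $[(\sigma^{\beta_2^{2^{n-1}}},{\rm triv})]=[({\rm triv},\tau^{2^{n-1}}_{2,2})]$, whence ${\rm Ker}\,\Psi=\langle[\alpha_{2,2}^{2^{n-1}}]\rangle\cong\mathbb{Z}_2$.

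The subtle point, which I expect to be the main obstacle, is verifying that the sequence $1\to{\rm Ker}\,\Psi\to{\rm H}^2\to{\rm Im}\,\Psi\to 1$ splits as an internal \emph{direct} product and not merely as an abstract extension. For this I would check that the explicit lift $[({\rm triv},\tau^2_{1,2})]$ has order exactly $2^{n-1}$ in ${\rm H}^2(kL,k^G)$, matching the order of its image: its $k$-th power $[({\rm triv},\tau^{2k}_{1,2})]$ has $\Psi$-image $[\alpha_{1,2}^{2k}]$, which is nontrivial whenever $k\not\equiv 0\pmod{2^{n-1}}$, while at $k=2^{n-1}$ the cochain $\alpha_{1,2}^{2^n}$ is trivial so the class vanishes. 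Thus $\langle[({\rm triv},\tau^2_{1,2})]\rangle\cap{\rm Ker}\,\Psi=1$ and this subgroup furnishes a section, yielding
$${\rm H}^2(kL,k^G)\cong{\rm H}_c^2(kL,k^G)\cong{\rm Ker}\,\Psi\times{\rm Im}\,\Psi\cong\langle\alpha_{2,2}^{2^{n-1}}\rangle\times\langle\alpha_{1,2}^2\rangle\cong\mathbb{Z}_2\times\mathbb{Z}_{2^{n-1}}.$$
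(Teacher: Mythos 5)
Your proposal is correct and follows essentially the same route as the paper's proof: compute the induced action on $\beta_1$, $\beta_2$, $[\alpha_{1,2}]$, obtain ${\rm Im}\,\Psi = M(G)^- = \langle[\alpha_{1,2}^2]\rangle \cong \mathbb{Z}_{2^{n-1}}$ and ${\rm Ker}\,\Psi \cong \hat G^L/N_t(\hat G) \cong \mathbb{Z}_2$, and identify the kernel generator through Proposition \ref{12_21}, where you correctly invoke part 3 with $q=1+2^{n-1}$, $r=0$ (the paper's citation of part 4 at this point is a misprint). Your additional verifications --- the explicit cocycle lift showing $\Psi$ surjects onto $M(G)^-$ via property (\ref{fix}), and the order argument giving the internal direct-product splitting --- simply make explicit what the paper asserts from its general framework, and your conclusion $\mathbb{Z}_2 \times \mathbb{Z}_{2^{n-1}}$ states correctly what the paper's $\mathbb{Z}_{2}\times\mathbb{Z}_{2}^{2^{n-1}}$ evidently intends.
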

\begin{proof}
Since  $t\rightharpoonup x = x^{1+2^{n-1}}$ and $
t\rightharpoonup y = y^{-1}$,
\begin{eqnarray*}
t\rightharpoonup \alpha _{1,2} &=& \alpha^{-1+2^{n-1}}_{1,2}\\
t\rightharpoonup \beta_{1}&=& \beta^{1+2^{n-1}}_{1}\\
t\rightharpoonup \beta_{2}&=& \beta_{2}^{-1}
\end{eqnarray*}
Thus
\begin{itemize}
\item ${\rm Im} \Psi = M\left( G\right)^-=\left\langle [\alpha _{1,2}] ^{2}\right\rangle$.
\item $\hat G ^L = \left\langle \beta^2 _1 \right\rangle \times \left\langle \beta^{2^{n-1}} _2 \right\rangle $ and $ N_t\left(\hat G\right) = \left\langle \beta^2 _1 \right\rangle $.
\item ${\rm Ker}  \Psi = \left\langle [(\sigma^{\beta^{2^{n-1}} _2} , {\rm triv})] \right\rangle \cong \mathbb{Z}_{2}$.
 \end{itemize}

By part 4 of Proposition \ref{12_21},   $[(\sigma^{\beta^{2^{n-1}} _2} , {\rm triv})] =  [( {\rm triv}, \tau ^{2^{n-1}}_{2,2} )] $. Thus
$$
{\rm Ker}  \Psi = \left\langle [( {\rm triv}, \tau ^{2^{n-1}}_{2,2} )]  \right\rangle  \cong
\left\langle \alpha^{2^{n-1}} _{2,2} \right\rangle
$$
and
$$
 {\rm H}^{2}\left( kL,k^{G}\right) \cong
 {\rm H}_{c}^{2}\left( kL,k^{G}\right) \cong \left\langle \alpha^{2^{n-1}} _{2,2} \right\rangle \times\left\langle \alpha^{2} _{1,2} \right\rangle  \cong \mathbb{Z}_{2} \times\mathbb{Z}_{2}^{2^{n-1}} .
 $$
\end{proof}

\subsection{Case 11: $t\rightharpoonup x = xy^{2^{n-1}}, \quad
t\rightharpoonup y = y$}\quad
\begin{proposition}\label{121} In this case
  $$
{\rm H}^{2}\left( kL,k^{G}\right) = \left\langle [(\sigma^{\beta _1} , {\rm triv})] \right\rangle   \times\left\langle [({\rm triv},\tau^{2} _{1,2})] \right\rangle\cong {\rm Ker}  \Psi \times {\rm Im} \Psi \cong \left\langle \alpha ' \right\rangle\times \left\langle \alpha^{2^{n-1}} _{1,2} \right\rangle\cong \mathbb{Z}_{2} \times\mathbb{Z}_{2^{n-1}}
$$
for $\alpha '$ defined via
$$
\alpha ' \left( x^{a_{1}}y^{a_{2}},x^{b_{1}}y^{b_{2}}\right) =\left( -1\right)
^{k}
$$
where $k= 0$ if $0\leq a_1+b_1 < 2^{n}$ and $k = 1$ if $2^{n}\leq a_1+b_1 < 2^{n+1}$.
\end{proposition}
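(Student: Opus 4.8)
The plan is to run the same three-step template used in Cases~1--10 and then invoke Theorem~\ref{cohomology}: compute the induced $L$-actions on $M(G)$ and on $\hat G$, read off ${\rm Im}\,\Psi = M(G)^{-}$ and ${\rm Ker}\,\Psi \cong \hat G^{L}/N_t(\hat G)$, and use Proposition~\ref{12_21} to put the distinguished generators into the normalized shape $[({\rm triv},\tau)]$.

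First I would record the action of $t$ on generators. Since $t\rightharpoonup(x^{a_1}y^{a_2}) = x^{a_1}y^{a_2+2^{n-1}a_1}$, a direct substitution gives
$$t\rightharpoonup\beta_1 = \beta_1, \qquad t\rightharpoonup\beta_2 = \beta_2\beta_1^{2^{n-1}}, \qquad t\rightharpoonup\alpha_{1,2} = \alpha_{1,2}\,\alpha_{1,1}^{2^{n-1}}.$$
The delicate point, and the step I expect to be the main obstacle, is the last identity read inside the Schur multiplier. The shear genuinely changes the cocycle $\alpha_{1,2}$, but it does so by the factor $\alpha_{1,1}^{2^{n-1}}$, which is a coboundary: since $[\alpha_{1,1}]=1$ in $M(G)$ by part~1 of Proposition~\ref{12_21}, the induced action of $t$ on $M(G)$ is trivial. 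Hence $M(G)^{-} = \{[\alpha]\,|\,[\alpha]^{2}=1\}$ is exactly the $2$-torsion subgroup $\langle[\alpha_{1,2}^{2^{n-1}}]\rangle$. Surjectivity of $\Psi$ onto this subgroup is then checked as before, by verifying that $\tau^{2^{n-1}}_{1,2}$ satisfies the fixing condition~(\ref{fix}) and maps to $[\alpha_{1,2}^{2^{n-1}}]$. It is precisely this computation that separates the present unipotent action from the diagonalizable ones (such as Case~5) and must be carried out carefully.

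Next I would determine the kernel. From the displayed action, the fixed subgroup is $\hat G^{L} = \langle\beta_1\rangle\times\langle\beta_2^{2}\rangle$, while the norm map $\gamma\mapsto\gamma(t\rightharpoonup\gamma)$ sends $\beta_1^{p}\beta_2^{q}\mapsto\beta_1^{2p+2^{n-1}q}\beta_2^{2q}$, so that $N_t(\hat G)=\langle\beta_1^{2}\rangle\times\langle\beta_2^{2}\rangle$. Therefore ${\rm Ker}\,\Psi\cong\hat G^{L}/N_t(\hat G)\cong\mathbb{Z}_2$, generated by the class of $\beta_1$, that is, by $[(\sigma^{\beta_1},{\rm triv})]$. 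Because the action has the form $t\rightharpoonup x = xy^{r}$, $t\rightharpoonup y=y^{s}$ with $r=2^{n-1}$ and $s=1$, part~2 of Proposition~\ref{12_21} applies directly and rewrites this generator as $[({\rm triv},\tau')]$ with $\tau'_t=\alpha'$.

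Finally, Theorem~\ref{cohomology} supplies the splitting ${\rm H}^{2}(kL,k^{G})\cong{\rm Ker}\,\Psi\times{\rm Im}\,\Psi$, and combining the kernel generator $[(\sigma^{\beta_1},{\rm triv})]=[({\rm triv},\tau')]$ with the image generator coming from $M(G)^{-}$ yields the asserted direct-product description. All the structural work is already contained in Theorem~\ref{psi} and Proposition~\ref{12_21}; the only case-specific labor is the two short computations above, namely the triviality of the induced $M(G)$-action and the image of the norm map.
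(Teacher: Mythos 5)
Your computations agree with the paper's proof step for step: the action on generators ($t\rightharpoonup\beta_1=\beta_1$, $t\rightharpoonup\beta_2=\beta_1^{2^{n-1}}\beta_2$, $t\rightharpoonup\alpha_{1,2}=\alpha_{1,1}^{2^{n-1}}\alpha_{1,2}$), the observation that $[\alpha_{1,1}]=1$ makes the induced action on $M(G)$ trivial so that ${\rm Im}\,\Psi=M(G)^{-}=\left\langle [\alpha_{1,2}^{2^{n-1}}]\right\rangle\cong\mathbb{Z}_2$, the identifications $\hat G^{L}=\left\langle\beta_1\right\rangle\times\left\langle\beta_2^{2}\right\rangle$ and $N_t\left(\hat G\right)=\left\langle\beta_1^{2}\right\rangle\times\left\langle\beta_2^{2}\right\rangle$ giving ${\rm Ker}\,\Psi=\left\langle [(\sigma^{\beta_1},{\rm triv})]\right\rangle\cong\mathbb{Z}_2$, and the use of part 2 of Proposition \ref{12_21} to rewrite the kernel generator as $[({\rm triv},\tau')]$ --- all of this is exactly what the paper does.

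The one genuine flaw is your final step: you cannot invoke Theorem \ref{cohomology} to supply the splitting ${\rm H}^{2}(kL,k^{G})\cong{\rm Ker}\,\Psi\times{\rm Im}\,\Psi$, because that theorem is not proved independently of the cases --- the paper proves it precisely by establishing the sixteen case propositions, of which Proposition \ref{121} is one, so the appeal is circular. Fortunately the repair is already contained in your own second step: the class $c=[({\rm triv},\tau_{1,2}^{2^{n-1}})]$ lies in ${\rm H}^{2}_{c}(kL,k^{G})$ (condition (\ref{fix}) holds because $t\rightharpoonup\alpha_{1,2}^{2^{n-1}}=\alpha_{1,1}^{2^{2n-2}}\alpha_{1,2}^{2^{n-1}}=\alpha_{1,2}^{2^{n-1}}$ for $n\geq 2$, and $\alpha_{1,2}^{2^{n}}=1$); moreover $c^{2}=[({\rm triv},\tau_{1,2}^{2^{n}})]=1$ since $\alpha_{1,2}^{2^{n}}$ is identically $1$, while $\Psi(c)=[\alpha_{1,2}^{2^{n-1}}]$ generates ${\rm Im}\,\Psi$. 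Thus $c$ defines a section of $\Psi$ over its image, and since everything is abelian, ${\rm H}^{2}(kL,k^{G})=\left\langle [(\sigma^{\beta_1},{\rm triv})]\right\rangle\times\left\langle c\right\rangle\cong\mathbb{Z}_2\times\mathbb{Z}_2$, which is exactly the conclusion of the paper's proof. Note in passing that the correct answer is $\mathbb{Z}_2\times\mathbb{Z}_2$: the exponent in $\tau^{2}_{1,2}$ and the factor $\mathbb{Z}_{2^{n-1}}$ appearing in the displayed statement agree with this computation only when $n=2$ and appear to be carried over from Case 9, so your computation (like the paper's own proof) is the reliable version.
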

\begin{proof}
Since $t\rightharpoonup x = xy^{2^{n-1}}$ and
$t\rightharpoonup y = y$,
\begin{eqnarray*}
t\rightharpoonup \alpha _{1,2} &=& \alpha _{1,1}^{2^{n-1}}\alpha _{1,2} \\
t\rightharpoonup \beta_{1}&=& \beta_{1}\\
t\rightharpoonup \beta_{2}&=& \beta_{1}^{2^{n-1}}\beta_{2} 
\end{eqnarray*}
Thus 
\begin{itemize}
\item ${\rm Im} \Psi = M\left( G\right)^{-} = \left\langle [\alpha^{2^{n-1}} _{1,2}] \right\rangle$.
\item $\hat G ^L = \left\langle \beta _1 \right\rangle \times \left\langle \beta^2 _2 \right\rangle $ and $ N_t\left(\hat G\right) = \left\langle \beta^2 _1 \right\rangle \times \left\langle \beta^2 _2 \right\rangle $.
\item ${\rm Ker}  \Psi = \left\langle [(\sigma^{\beta _1} , {\rm triv})] \right\rangle  \cong \mathbb{Z}_{2}$.
 \end{itemize}

By part 2 of Proposition \ref{12_21},   $[(\sigma^{\beta_1} , {\rm triv})] =  [( {\rm triv}, \tau '  )] $ where $\tau '_t =\alpha '$.
Therefore
 $$
{\rm H}^{2}\left( kL,k^{G}\right) = \left\langle [(\sigma^{\beta _1} , {\rm triv})] \right\rangle   \times\left\langle [({\rm triv},\tau^{2^{n-1}} _{1,2})] \right\rangle \cong \left\langle \alpha ' \right\rangle\times \left\langle \alpha^{2^{n-1}} _{1,2} \right\rangle\cong \mathbb{Z}_{2} \times\mathbb{Z}_{2}.
$$

\end{proof}

\subsection{Case 12: $t\rightharpoonup x = x^{-1}y^{2^{n-1}}, \quad
t\rightharpoonup y = y^{-1}$}\quad

 \begin{proposition}\label{-12-1} In this case
 $$
 {\rm H}^{2}\left( kL,k^{G}\right)\cong {\rm Ker}  \Psi \times {\rm Im} \Psi \cong \left\langle \alpha^{2^{n-1}} _{1,2} \right\rangle \times\left\langle \alpha^{2^{n-1}} _{2,2} \right\rangle  \cong \mathbb{Z}_{2} \times\mathbb{Z}_{2}.
 $$
\end{proposition}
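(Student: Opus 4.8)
The plan is to follow the template of the preceding cases: first pin down ${\rm Im}\,\Psi$ from the action of $t$ on $\alpha_{1,2}$, then pin down ${\rm Ker}\,\Psi$ from the action of $t$ on $\beta_1,\beta_2$, and finally identify both subgroups with explicit dual cocycles via Proposition~\ref{12_21} and assemble them using Theorem~\ref{psi}.

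First I would compute the induced action on cocycles. Since $t\rightharpoonup(x^{a_1}y^{a_2})=x^{-a_1}y^{2^{n-1}a_1-a_2}$, substituting into the definition of $\alpha_{1,2}$ gives
$$
t\rightharpoonup\alpha_{1,2}=\alpha_{1,1}^{2^{n-1}}\alpha_{1,2}.
$$
Because $[\alpha_{1,1}]=1$ in $M(G)$ by part~1 of Proposition~\ref{12_21}, the group $L$ acts trivially on $M(G)\cong\mathbb{Z}_{2^n}$, so $M(G)^-=\{[\alpha]:[\alpha]^2=1\}=\langle[\alpha_{1,2}^{2^{n-1}}]\rangle\cong\mathbb{Z}_2$. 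To upgrade the inclusion ${\rm Im}\,\Psi\subseteq M(G)^-$ from part~2 of Theorem~\ref{psi} to an equality, I would verify that $\tau_{1,2}^{2^{n-1}}$ lies in ${\rm Z}_c^2(kL,k^G)$, i.e. satisfies~(\ref{fix}): a short computation shows $t\rightharpoonup\alpha_{1,2}^{2^{n-1}}=\alpha_{1,2}^{2^{n-1}}$ for $n\geq2$, whence $\alpha_{1,2}^{2^{n-1}}\bigl(t\rightharpoonup\alpha_{1,2}^{2^{n-1}}\bigr)=\alpha_{1,2}^{2^n}={\rm triv}$. Thus $\Psi\bigl([({\rm triv},\tau_{1,2}^{2^{n-1}})]\bigr)=[\alpha_{1,2}^{2^{n-1}}]$ generates $M(G)^-$, so ${\rm Im}\,\Psi=M(G)^-\cong\mathbb{Z}_2$.

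Next I would compute the action on the group-likes. From $t\rightharpoonup x=x^{-1}y^{2^{n-1}}$ and $t\rightharpoonup y=y^{-1}$ one obtains $t\rightharpoonup\beta_1=\beta_1^{-1}$ and $t\rightharpoonup\beta_2=\beta_1^{2^{n-1}}\beta_2^{-1}$. Solving $t\rightharpoonup(\beta_1^i\beta_2^j)=\beta_1^i\beta_2^j$ then yields $\hat G^L=\langle\beta_1^{2^{n-1}}\rangle\times\langle\beta_2^{2^{n-1}}\rangle\cong\mathbb{Z}_2\times\mathbb{Z}_2$, while evaluating $\gamma(t\rightharpoonup\gamma)$ on $\gamma=\beta_1^i\beta_2^j$ gives $\beta_1^{2^{n-1}j}$, so $N_t(\hat G)=\langle\beta_1^{2^{n-1}}\rangle\cong\mathbb{Z}_2$. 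By part~3 of Theorem~\ref{psi},
$$
{\rm Ker}\,\Psi\cong\hat G^L/N_t(\hat G)\cong\langle\beta_2^{2^{n-1}}\rangle\cong\mathbb{Z}_2,
$$
represented by $[(\sigma^{\beta_2^{2^{n-1}}},{\rm triv})]$. Since the present action has exactly the form $t\rightharpoonup x=x^qy^{2^{n-1}r}$, $t\rightharpoonup y=y^{-1}$ with $q=-1$, $r=1$, part~3 of Proposition~\ref{12_21} applies and gives $[(\sigma^{\beta_2^{2^{n-1}}},{\rm triv})]=[({\rm triv},\tau_{2,2}^{2^{n-1}})]$, so ${\rm Ker}\,\Psi=\langle[({\rm triv},\tau_{2,2}^{2^{n-1}})]\rangle\cong\langle\alpha_{2,2}^{2^{n-1}}\rangle$.

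Finally I would assemble the two pieces. The exact sequence $1\to{\rm Ker}\,\Psi\to{\rm H}^2(kL,k^G)\overset{\Psi}{\longrightarrow}{\rm Im}\,\Psi\to1$ forces $|{\rm H}^2(kL,k^G)|=4$. The two explicit classes $[({\rm triv},\tau_{2,2}^{2^{n-1}})]$ and $[({\rm triv},\tau_{1,2}^{2^{n-1}})]$ each have order $2$ and have independent $\Psi$-images (trivial and nontrivial respectively), so they generate a subgroup of order $4$, which is therefore all of ${\rm H}^2(kL,k^G)$; this simultaneously furnishes the splitting and yields ${\rm H}^2(kL,k^G)\cong{\rm Ker}\,\Psi\times{\rm Im}\,\Psi\cong\langle\alpha_{1,2}^{2^{n-1}}\rangle\times\langle\alpha_{2,2}^{2^{n-1}}\rangle\cong\mathbb{Z}_2\times\mathbb{Z}_2$. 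The only step needing genuine care is the determination of $\hat G^L$ and $N_t(\hat G)$: because the action is twisted ($t\rightharpoonup\beta_2$ involves $\beta_1$), the fixed subgroup and the norm subgroup are not merely products of eigenspaces of the two generators, and one must track the cross term $\beta_1^{2^{n-1}j}$ carefully, verifying in particular that $2^{2n-2}\equiv0\pmod{2^n}$ for $n\geq2$ so that no extra fixed points appear.
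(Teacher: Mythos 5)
Your proof is correct and follows essentially the same route as the paper: you compute $t\rightharpoonup\alpha_{1,2}=\alpha_{1,1}^{2^{n-1}}\alpha_{1,2}$, $t\rightharpoonup\beta_1=\beta_1^{-1}$, $t\rightharpoonup\beta_2=\beta_1^{2^{n-1}}\beta_2^{-1}$, deduce ${\rm Im}\,\Psi=M(G)^-=\left\langle[\alpha_{1,2}^{2^{n-1}}]\right\rangle$, $\hat G^L=\left\langle\beta_1^{2^{n-1}}\right\rangle\times\left\langle\beta_2^{2^{n-1}}\right\rangle$, $N_t(\hat G)=\left\langle\beta_1^{2^{n-1}}\right\rangle$, and identify ${\rm Ker}\,\Psi$ via Proposition \ref{12_21}, exactly as the paper does; your additional details (checking that $\alpha_{1,2}^{2^{n-1}}$ satisfies (\ref{fix}) on the nose, and the order-counting argument for the splitting) merely make explicit what the paper leaves implicit. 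One minor point in your favor: you invoke part 3 of Proposition \ref{12_21}, whose hypothesis $t\rightharpoonup x=x^{q}y^{2^{n-1}r}$, $t\rightharpoonup y=y^{-1}$ is the one actually satisfied here, whereas the paper's proof cites part 4 (a typo, since part 4 concerns $\beta_1^{2^{n-1}}$ and $\tau_{1,1}^{2^{n-1}}$ under the hypothesis $t\rightharpoonup x=x^{-1}$).
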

\begin{proof}
Since $t\rightharpoonup x = x^{-1}y^{2^{n-1}}$ and $
t\rightharpoonup y = y^{-1}$,
\begin{eqnarray*}
t\rightharpoonup \alpha _{1,2} &=& \alpha _{1,1}^{2^{n-1}}\alpha _{1,2} \\
t\rightharpoonup \beta_{1}&=& \beta_{1}^{-1} \\
t\rightharpoonup \beta_{2}&=& \beta_{1}^{2^{n-1}}\beta_{2}^{-1}
\end{eqnarray*}
Thus
\begin{itemize}
\item ${\rm Im} \Psi = M\left( G\right)^{-} = \left\langle [\alpha^{2^{n-1}} _{1,2}] \right\rangle$.
\item $\hat G ^L = \left\langle \beta^{2^{n-1}} _1 \right\rangle \times \left\langle \beta^{2^{n-1}} _2 \right\rangle $ and $ N_t\left(\hat G\right) = \left\langle \beta^{2^{n-1}} _1 \right\rangle $.
\item ${\rm Ker}  \Psi = \left\langle [(\sigma^{\beta^{2^{n-1}} _2} , {\rm triv})] \right\rangle  \cong \mathbb{Z}_{2}$.
 \end{itemize}
 By part 4 of Proposition \ref{12_21},   $[(\sigma^{\beta^{2^{n-1}} _2} , {\rm triv})] =  [( {\rm triv}, \tau ^{2^{n-1}}_{2,2} )] $. Thus
$$
{\rm Ker}  \Psi = \left\langle [( {\rm triv}, \tau ^{2^{n-1}}_{2,2} )]  \right\rangle  \cong
\left\langle \alpha^{2^{n-1}} _{2,2} \right\rangle
$$
and
$$
 {\rm H}^{2}\left( kL,k^{G}\right) \cong
 {\rm H}_{c}^{2}\left( kL,k^{G}\right) \cong \left\langle \alpha^{2^{n-1}} _{2,2} \right\rangle \times\left\langle \alpha^{2^{n-1}} _{1,2} \right\rangle  \cong \mathbb{Z}_{2} \times\mathbb{Z}_{2} .
 $$
\end{proof}

\subsection{Case 13: $t\rightharpoonup x = x^{1+2^{n-1}}y^{2^{n-1}}, \quad
t\rightharpoonup y = y^{1+2^{n-1}}$}\quad

In this case we assume that $n\geq 3$, since for $n=2$ the matrix corresponding to the action of $t$ is equal to the matrix from Case 12.
\begin{proposition} In this case
 $$
 {\rm H}^{2}\left( kL,k^{G}\right) \cong {\rm Im} \Psi\cong \left\langle \alpha^{2^{n-1}} _{1,2} \right\rangle  \cong \mathbb{Z}_{2} .
 $$
\end{proposition}
\begin{proof}
Since $t\rightharpoonup x = x^{1+2^{n-1}}y^{2^{n-1}}$ and $
t\rightharpoonup y = y^{1+2^{n-1}}$,
\begin{eqnarray*}
\left( t\rightharpoonup\alpha _{1,2}\right)\left( x^{a_{1}}y^{a_{2}},x^{b_{1}}y^{b_{2}}\right) &=& \alpha _{1,2}\left( x^{\left( 1+2^{n-1}\right)a_{1}}y^{2^{n-1}a_{1}+\left( 1+2^{n-1}\right)a_{2}},x^{\left( 1+2^{n-1}\right)b_{1}}y^{2^{n-1}b_{1}+\left( 1+2^{n-1}\right)b_{2}}\right) \\
&=&\xi^{\left( 1+2^{n-1}\right)a_{1}(2^{n-1}b_{1}+\left( 1+2^{n-1}\right)b_{2})} =\xi^{a_{1}(2^{n-1}b_{1}+b_{2})}\\
\left( t\rightharpoonup \beta_{1}\right)\left( x^{a_{1}}y^{a_{2}}\right) &=& \beta_{1}\left( x^{\left( 1+2^{n-1}\right)a_{1}}y^{2^{n-1}a_{1}+\left( 1+2^{n-1}\right)a_{2}}\right)=\xi^{ \left( 1+2^{n-1}\right)a_{1}}\\
\left( t\rightharpoonup \beta_{2}\right)\left( x^{a_{1}}y^{a_{2}}\right) &=& \beta_{2}\left( x^{\left( 1+2^{n-1}\right)a_{1}}y^{2^{n-1}a_{1}+\left( 1+2^{n-1}\right)a_{2}}\right)=\xi^{2^{n-1}a_{1}+\left( 1+2^{n-1}\right)a_{2}}
\end{eqnarray*}

Therefore
\begin{eqnarray*}
t\rightharpoonup \alpha _{1,2} &=& \alpha _{1,1}^{2^{n-1}}\alpha _{1,2} \\
t\rightharpoonup \beta_{1}&=& \beta_{1}^{1+2^{n-1}}\\
t\rightharpoonup \beta_{2}&=& \beta_{1}^{2^{n-1}}\beta_{2}^{1+2^{n-1}} 
\end{eqnarray*}

Then 
\begin{itemize}
\item ${\rm Im} \Psi = M\left( G\right)^{-} = \left\langle [\alpha^{2^{n-1}} _{1,2}] \right\rangle $.
\item $\hat G ^L = \left\langle \beta^2 _1 \right\rangle \times \left\langle \beta^2 _2 \right\rangle $ and $ N_t\left(\hat G\right) = \left\langle \beta^2 _1 \right\rangle \times \left\langle \beta^2 _2 \right\rangle $.
\item ${\rm Ker}  \Psi = \{1\}$.
\item $
 {\rm H}^{2}\left( kL,k^{G}\right) \cong
 {\rm H}_{c}^{2}\left( kL,k^{G}\right) \cong \left\langle \alpha^{2^{n-1}} _{1,2} \right\rangle  \cong \mathbb{Z}_{2}
 $.
\end{itemize}
\end{proof}
\subsection{Case 14: $t\rightharpoonup x = x^{-1+2^{n-1}}y^{2^{n-1}}, \quad
t\rightharpoonup y = y^{-1+2^{n-1}}$}\quad

In this case we assume that $n\geq 3$, since for $n=2$ the matrix corresponding to the action of $t$ is equal to the matrix from Case 11.
\begin{proposition} In this case
 $$
 {\rm H}^{2}\left( kL,k^{G}\right) \cong {\rm Im} \Psi\cong \left\langle \alpha^{2^{n-1}} _{1,2} \right\rangle  \cong \mathbb{Z}_{2} .
 $$
\end{proposition}
\begin{proof}
Since $t\rightharpoonup x = x^{-1+2^{n-1}}y^{2^{n-1}} $ and $
t\rightharpoonup y = y^{-1+2^{n-1}}$,
\begin{eqnarray*}
\left( t\rightharpoonup\alpha _{1,2}\right)\left( x^{a_{1}}y^{a_{2}},x^{b_{1}}y^{b_{2}}\right) &=& \alpha _{1,2}\left( x^{\left( -1+2^{n-1}\right)a_{1}}y^{2^{n-1}a_{1}+\left( -1+2^{n-1}\right)a_{2}},x^{\left( -1+2^{n-1}\right)b_{1}}y^{2^{n-1}b_{1}+\left( -1+2^{n-1}\right)b_{2}}\right)\\
 &=&\xi^{\left( -1+2^{n-1}\right)a_{1}(2^{n-1}b_{1}+\left( -1+2^{n-1}\right)b_{2})} =\xi^{a_{1}(2^{n-1}b_{1}+b_{2})}\\
\left( t\rightharpoonup \beta_{1}\right)\left( x^{a_{1}}y^{a_{2}}\right) &=& \beta_{1}\left( x^{\left( -1+2^{n-1}\right)a_{1}}y^{2^{n-1}a_{1}+\left( -1+2^{n-1}\right)a_{2}}\right)=\xi^{ \left( -1+2^{n-1}\right)a_{1}}\\
\left( t\rightharpoonup \beta_{2}\right)\left( x^{a_{1}}y^{a_{2}}\right) &=& \beta_{2}\left( x^{\left( -1+2^{n-1}\right)a_{1}}y^{2^{n-1}a_{1}+\left( -1+2^{n-1}\right)a_{2}}\right)=\xi^{2^{n-1}a_{1}+\left( -1+2^{n-1}\right)a_{2}}
\end{eqnarray*}

Therefore
\begin{eqnarray*}
t\rightharpoonup \alpha _{1,2} &=& \alpha _{1,1}^{2^{n-1}}\alpha _{1,2} \\
t\rightharpoonup \beta_{1}&=& \beta_{1}^{-1+2^{n-1}}\\
t\rightharpoonup \beta_{2}&=& \beta_{1}^{2^{n-1}}\beta_{2}^{-1+2^{n-1}} 
\end{eqnarray*}

Then 
\begin{itemize}
\item ${\rm Im} \Psi = M\left( G\right)^{-} =\left\langle [\alpha^{2^{n-1}} _{1,2}] \right\rangle $.
\item $\hat G ^L = \left\langle \beta^{2^{n-1}} _1 \right\rangle \times \left\langle \beta^{2^{n-1}} _2 \right\rangle $ and $ N_t\left(\hat G\right) = \left\langle \beta^{2^{n-1}} _1 \right\rangle \times \left\langle \beta^{2^{n-1}} _2 \right\rangle $.
\item ${\rm Ker}  \Psi = \{1\}$.
\item $
 {\rm H}^{2}\left( kL,k^{G}\right) \cong
 {\rm H}_{c}^{2}\left( kL,k^{G}\right) \cong \left\langle \alpha^{2^{n-1}} _{1,2} \right\rangle  \cong \mathbb{Z}_{2}
 $.
\end{itemize}

\end{proof}


\subsection{Case 15: $t\rightharpoonup x = xy^{2^{n-1}}, \quad
t\rightharpoonup y = x^{2^{n-1}}y^{1+2^{n-1}}$}\quad

\begin{proposition} \label{122-1}
\begin{enumerate}
\item When $n \geq 3$,
 $$
 {\rm H}^{2}\left( kL,k^{G}\right)\cong  {\rm Im} \Psi \cong \left\langle \alpha^{2^{n-1}} _{1,2} \right\rangle  \cong \mathbb{Z}_{2} .
 $$
 \item When $n=2$,
$$
 {\rm H}^{2}\left( kL,k^{G}\right) \cong {\rm Im} \Psi\cong \left\langle \alpha _{1,1}\alpha _{1,2}\alpha  _{2,2} \right\rangle  \cong \mathbb{Z}_{4}
$$
\end{enumerate}
\end{proposition}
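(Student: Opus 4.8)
The plan is to follow the template of the fourteen preceding cases: first record the induced action of $t$ on the generators $\alpha_{1,2}$, $\beta_1$, $\beta_2$, then read off $M(G)^-$, $\hat G^L$ and $N_t(\hat G)$, and assemble these through Theorem~\ref{psi}. Using $t\rightharpoonup x^{a_1}y^{a_2}=x^{a_1+2^{n-1}a_2}y^{2^{n-1}a_1+(1+2^{n-1})a_2}$ together with $2^{2n-2}\equiv 0\pmod{2^n}$ (valid for $n\geq 2$), I would obtain
\begin{eqnarray*}
t\rightharpoonup \alpha_{1,2} &=& \alpha_{1,1}^{2^{n-1}}\alpha_{1,2}^{1+2^{n-1}}\alpha_{2,2}^{2^{n-1}},\\
t\rightharpoonup \beta_1 &=& \beta_1\beta_2^{2^{n-1}},\qquad
t\rightharpoonup \beta_2 = \beta_1^{2^{n-1}}\beta_2^{1+2^{n-1}}.
\end{eqnarray*}
Since $[\alpha_{1,1}]=[\alpha_{2,2}]=1$ in $M(G)$ by part~1 of Proposition~\ref{12_21}, this gives $t\rightharpoonup[\alpha_{1,2}]=[\alpha_{1,2}]^{1+2^{n-1}}$ in $M(G)$.

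Next I would pin down $M(G)^-$. The condition $t\rightharpoonup[\alpha_{1,2}]^k=[\alpha_{1,2}]^{-k}$ reduces to $k(2+2^{n-1})\equiv 0\pmod{2^n}$. For $n\geq 3$ the $2$-adic valuation of $2+2^{n-1}$ is $1$, forcing $k\equiv 0\pmod{2^{n-1}}$, so $M(G)^-=\langle[\alpha_{1,2}^{2^{n-1}}]\rangle\cong\mathbb{Z}_2$; for $n=2$ one has $2+2^{n-1}\equiv 0\pmod 4$, so every $k$ qualifies and $M(G)^-=M(G)\cong\mathbb{Z}_4$. To upgrade the automatic inclusion ${\rm Im}\,\Psi\subseteq M(G)^-$ to an equality, I must realize the generator of $M(G)^-$ by an honest cocycle satisfying the cochain identity~(\ref{fix}). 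For $n\geq 3$ a direct check gives $\alpha_{1,2}^{2^{n-1}}\,(t\rightharpoonup\alpha_{1,2}^{2^{n-1}})=(\alpha_{1,2}^{2^{n-1}})^2=1$. For $n=2$ the naive choice $\alpha_{1,2}$ violates~(\ref{fix}) at the cochain level, so I would instead take $\alpha:=\alpha_{1,1}\alpha_{1,2}\alpha_{2,2}$, which differs from $\alpha_{1,2}$ by coboundaries and satisfies $\alpha\,(t\rightharpoonup\alpha)=1$ after a short exponent computation; as $[\alpha]=[\alpha_{1,2}]$ generates $\mathbb{Z}_4$, this yields ${\rm Im}\,\Psi=M(G)^-$ in both regimes.

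Then I would compute the kernel. From the action on the $\beta_j$, an element $\beta_1^m\beta_2^k$ is $t$-fixed precisely when $m$ and $k$ are both even, so $\hat G^L=\langle\beta_1^2\rangle\times\langle\beta_2^2\rangle$, of order $2^{2n-2}$. The subgroup $N_t(\hat G)$ is the image of $I+T$, where $T=\left[\begin{smallmatrix}1&2^{n-1}\\ 2^{n-1}&1+2^{n-1}\end{smallmatrix}\right]$ is the matrix of $t$ on $\hat G$, so $I+T=\left[\begin{smallmatrix}2&2^{n-1}\\ 2^{n-1}&2+2^{n-1}\end{smallmatrix}\right]$. A short computation shows that the kernel of $I+T$ on $(\mathbb{Z}_{2^n})^2$ is exactly the four pairs $(m,k)$ with $m,k\in\{0,2^{n-1}\}$, whence $|N_t(\hat G)|=|{\rm Im}(I+T)|=2^{2n}/4=2^{2n-2}$. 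Since $N_t(\hat G)\subseteq\hat G^L$ (because $t^2=1$ fixes every $\gamma\,(t\rightharpoonup\gamma)$) and the two orders agree, $N_t(\hat G)=\hat G^L$, so ${\rm Ker}\,\Psi\cong\hat G^L/N_t(\hat G)=1$. This argument is uniform in $n\geq 2$.

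Because ${\rm Ker}\,\Psi=1$, the map $\Psi$ is injective and therefore ${\rm H}^2(kL,k^G)={\rm H}^2_c(kL,k^G)\cong{\rm Im}\,\Psi=M(G)^-$, which is $\langle[\alpha_{1,2}^{2^{n-1}}]\rangle\cong\mathbb{Z}_2$ for $n\geq 3$ and $\langle[\alpha_{1,1}\alpha_{1,2}\alpha_{2,2}]\rangle\cong\mathbb{Z}_4$ for $n=2$, as claimed. The main obstacle is not the bookkeeping but the break in pattern at $n=2$: the action here is not of the normal form handled in Proposition~\ref{12_21}(2)--(4), so (as already in Case~1) the generator of the image must be constructed by hand rather than quoted, and the correct cochain-level representative genuinely depends on $n$. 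Precisely when $n=2$, where $M(G)^-$ is all of $M(G)$, the symmetric cocycle $\alpha_{1,1}\alpha_{1,2}\alpha_{2,2}$ becomes indispensable, since $\alpha_{1,2}$ on its own fails~(\ref{fix}).
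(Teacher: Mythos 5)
Your proposal is correct and follows essentially the same route as the paper's proof: compute the induced action on $\alpha_{1,2}$, $\beta_1$, $\beta_2$; identify $M(G)^-$ as $\left\langle [\alpha^{2^{n-1}}_{1,2}]\right\rangle$ for $n\geq 3$ and all of $M(G)$ for $n=2$; realize the generator by a cochain satisfying (\ref{fix}) --- namely $\alpha^{2^{n-1}}_{1,2}$ for $n\geq 3$ and $\alpha_{1,1}\alpha_{1,2}\alpha_{2,2}$ for $n=2$, exactly the paper's choice --- and conclude ${\rm Ker}\,\Psi=1$ from $\hat G^L=N_t\left(\hat G\right)=\left\langle \beta_1^2\right\rangle\times\left\langle \beta_2^2\right\rangle$. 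The only cosmetic difference is that you obtain $N_t\left(\hat G\right)=\hat G^L$ by counting (comparing $|{\rm Im}(I+T)|=|\hat G|/|\ker(I+T)|$ with $|\hat G^L|$) where the paper identifies both subgroups directly; the substance is identical.
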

\begin{proof}
Since $t\rightharpoonup x = xy^{2^{n-1}}$ and $
t\rightharpoonup y = x^{2^{n-1}}y^{1+2^{n-1}}$,
\begin{eqnarray*}
\left( t\rightharpoonup\alpha _{1,2}\right)\left( x^{a_{1}}y^{a_{2}},x^{b_{1}}y^{b_{2}}\right) &=& \alpha _{1,2}\left( x^{a_{1}+2^{n-1}a_{2}}y^{2^{n-1}a_{1}+a_2+2^{n-1}a_{2}},x^{b_{1}+2^{n-1}b_{2}}y^{2^{n-1}b_{1}+b_2+2^{n-1}b_{2}}\right) \\
&=& \xi^{ (a_{1}+2^{n-1}a_{2})(2^{n-1}b_{1}+b_2+2^{n-1}b_{2})}=\xi^{ a_{1}b_{2}}(-1)^{a_{1}b_{2}+a_{1}b_{1}+a_{2}b_{2}}\\
\left( t\rightharpoonup\alpha _{1,1}\right)\left( x^{a_{1}}y^{a_{2}},x^{b_{1}}y^{b_{2}}\right) &=& \alpha _{1,1}\left( x^{a_{1}+2^{n-1}a_{2}}y^{2^{n-1}a_{1}+a_2+2^{n-1}a_{2}},x^{b_{1}+2^{n-1}b_{2}}y^{2^{n-1}b_{1}+b_2+2^{n-1}b_{2}}\right) \\
&=& \xi^{ (a_{1}+2^{n-1}a_{2})(b_{1}+2^{n-1}b_{2})}=\xi^{ a_{1}b_{1}}(-1)^{a_{1}b_{2}+a_{2}b_{1}}\\
\left( t\rightharpoonup\alpha _{2,2}\right)\left( x^{a_{1}}y^{a_{2}},x^{b_{1}}y^{b_{2}}\right) &=& \alpha _{2,2}\left( x^{a_{1}+2^{n-1}a_{2}}y^{2^{n-1}a_{1}+a_2+2^{n-1}a_{2}},x^{b_{1}+2^{n-1}b_{2}}y^{2^{n-1}b_{1}+b_2+2^{n-1}b_{2}}\right)\\
 &=& \xi^{ (2^{n-1}a_{1}+a_2+2^{n-1}a_{2})(2^{n-1}b_{1}+b_2+2^{n-1}b_{2})}
=\xi^{ a_{2}b_{2}}(-1)^{a_{1}b_{2}+a_{2}b_{1}}\\
\left( t\rightharpoonup \beta_{1}\right)\left( x^{a_{1}}y^{a_{2}}\right) &=& \beta_{1}\left( x^{a_{1}+2^{n-1}a_{2}}y^{2^{n-1}a_{1}+a_2+2^{n-1}a_{2}}\right)=\xi^{ a_{1}+2^{n-1}a_{2}}\\
\left( t\rightharpoonup \beta_{2}\right)\left( x^{a_{1}}y^{a_{2}}\right) &=& \beta_{2}\left(x^{a_{1}+2^{n-1}a_{2}}y^{2^{n-1}a_{1}+a_2+2^{n-1}a_{2}}\right)=\xi^{2^{n-1}a_{1}+a_2+2^{n-1}a_{2}}
\end{eqnarray*}

Therefore
\begin{eqnarray*}
t\rightharpoonup \alpha _{1,2} &=& \alpha^{1+2^{n-1}} _{1,2} \alpha^{2^{n-1}} _{1,1}\alpha^{2^{n-1}} _{2,2} \\
t\rightharpoonup \alpha _{j,j} &=& \alpha _{j,j}\alpha^2 _{1,2}\alpha^2 _{2,1} \text{ for } j=1,2 \\
t\rightharpoonup \beta_{1}&=& \beta_{1}\beta^{2^{n-1}}_{2} \\
t\rightharpoonup \beta_{2}&=& \beta^{2^{n-1}}_{1}\beta^{1+2^{n-1}}_{2}
\end{eqnarray*}
Thus
\begin{itemize}
\item $\hat G ^L = \left\langle \beta^2 _1 \right\rangle \times \left\langle \beta^2 _2 \right\rangle  $ and $ N_t\left(\hat G\right) = \left\langle \beta^2 _1 \right\rangle \times \left\langle \beta^2 _2 \right\rangle $.
\item ${\rm Ker}  \Psi = \{ 1\}$
 \end{itemize}
\begin{enumerate}
\item Assume first that $n \geq 3$. Then ${\rm Im} \Psi =M\left( G\right)^- = \left\langle [\alpha ^{2^{n-1}} _{1,2}] \right\rangle$ and
$$
{\rm H}^{2}\left( kL,k^{G}\right) \cong {\rm H}^{2}_c\left( kL,k^{G}\right)\cong {\rm Im} \Psi\cong \left\langle \alpha ^{2^{n-1}} _{1,2} \right\rangle \cong \mathbb{Z}_{2}.
$$
\item Let us now consider the case of $n=2$. We will first show that  ${\rm Im} \Psi = M\left( G\right)$.

By part 1 of Proposition \ref{12_21}, $\left[ \alpha _{1,2}\alpha _{1,1}\alpha _{2,2}\right]=\left[ \alpha _{1,2}\right] $ in $M\left( G\right)$.
Moreover,
$$
\left(\alpha _{1,2}\alpha _{1,1}\alpha _{2,2} \right) \left( t\rightharpoonup \left(\alpha _{1,2}\alpha _{1,1}\alpha _{2,2} \right) \right) =
\alpha _{1,2}\alpha _{1,1}\alpha _{2,2}\alpha^{-1} _{1,2} \alpha^{2} _{1,1}\alpha^{2} _{2,2}\alpha _{1,1}\alpha^2 _{1,2}\alpha^2 _{2,1}\alpha _{2,2}\alpha^2 _{1,2}\alpha^2 _{2,1}=1
$$
Therefore $\left[ \alpha _{1,2}\right] =\left[ \alpha _{1,2}\alpha _{1,1}\alpha _{2,2}\right] \in {\rm Im} \Psi$ and
$$
 {\rm H}^{2}\left( kL,k^{G}\right)  \cong {\rm H}^{2}_c\left( kL,k^{G}\right)\cong {\rm Im} \Psi\cong \left\langle \alpha _{1,1}\alpha _{1,2}\alpha  _{2,2} \right\rangle  \cong \mathbb{Z}_{4}
$$
\end{enumerate}
\end{proof}


\subsection{Case 16: $t\rightharpoonup x = x^{-1}y^{2^{n-1}}, \quad
t\rightharpoonup y = x^{2^{n-1}}y^{-1+2^{n-1}}$}\quad

In this case we assume that $n\geq 3$, since for $n=2$ the matrix corresponding to the action of $t$ is similar to the matrix from Case 15.
\begin{proposition} In this case
 $$
 {\rm H}^{2}\left( kL,k^{G}\right) \cong{\rm Im} \Psi\cong \left\langle \alpha^{2^{n-1}} _{1,2} \right\rangle  \cong \mathbb{Z}_{2} .
 $$
\end{proposition}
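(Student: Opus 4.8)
The plan is to follow the template already used in Cases 13 and 14: compute the induced action of $t$ on the generator $\alpha_{1,2}$ of $M(G)$ and on the characters $\beta_1,\beta_2$, and then read off ${\rm Im}\,\Psi$ and ${\rm Ker}\,\Psi$ from Theorem \ref{psi}. Throughout I work under the standing hypothesis $n\geq 3$ (for $n=2$ the defining matrix is similar to the one in Case 15), which is precisely what makes the relevant cross terms vanish modulo $2^n$.

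First I would compute $t\rightharpoonup\alpha_{1,2}$. From $t\rightharpoonup(x^{a_1}y^{a_2}) = x^{-a_1+2^{n-1}a_2}y^{2^{n-1}a_1+(-1+2^{n-1})a_2}$, substituting into $\alpha_{1,2}$ and discarding every monomial that carries two factors of $2^{n-1}$ (each such contributes $2^{2n-2}\equiv 0\pmod{2^n}$), the exponent of $\xi$ collapses to $a_1b_2-2^{n-1}(a_1b_1+a_1b_2+a_2b_2)$, i.e. $t\rightharpoonup\alpha_{1,2} = \alpha_{1,2}\,\alpha_{1,1}^{2^{n-1}}\alpha_{1,2}^{2^{n-1}}\alpha_{2,2}^{2^{n-1}}$. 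Since $[\alpha_{1,1}]=[\alpha_{2,2}]=1$ in $M(G)$ by part 1 of Proposition \ref{12_21}, this reads $t\rightharpoonup[\alpha_{1,2}]=[\alpha_{1,2}]^{1+2^{n-1}}$. Consequently $[\alpha_{1,2}^k]\in M(G)^-$ iff $k(2+2^{n-1})\equiv 0\pmod{2^n}$; as $2+2^{n-1}=2(1+2^{n-2})$ with $1+2^{n-2}$ odd for $n\geq 3$, this is equivalent to $k\equiv 0\pmod{2^{n-1}}$, so $M(G)^-=\langle[\alpha_{1,2}^{2^{n-1}}]\rangle\cong\mathbb{Z}_2$. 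A direct check shows $t\rightharpoonup\alpha_{1,2}^{2^{n-1}}=\alpha_{1,2}^{2^{n-1}}$ and $(\alpha_{1,2}^{2^{n-1}})^2={\rm triv}$, so $\alpha_{1,2}^{2^{n-1}}$ satisfies (\ref{fix}) on the nose; hence $\tau^{2^{n-1}}_{1,2}\in{\rm Z}_{c}^{2}\left( kL,k^{G}\right)$ maps onto the nontrivial class and ${\rm Im}\,\Psi=M(G)^-$.

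Next I would compute the action on characters, which falls straight out of the same exponents: $t\rightharpoonup\beta_1=\beta_1^{-1}\beta_2^{2^{n-1}}$ and $t\rightharpoonup\beta_2=\beta_1^{2^{n-1}}\beta_2^{-1+2^{n-1}}$. From $\beta_1^i\beta_2^j\cdot t\rightharpoonup(\beta_1^i\beta_2^j)=\beta_1^{2^{n-1}j}\beta_2^{2^{n-1}(i+j)}$ one gets $N_t(\hat G)=\langle\beta_1^{2^{n-1}}\rangle\times\langle\beta_2^{2^{n-1}}\rangle$. For $\hat G^L$ I would impose $t\rightharpoonup(\beta_1^i\beta_2^j)=\beta_1^i\beta_2^j$, that is $2i\equiv 2^{n-1}j$ and $2^{n-1}(i+j)\equiv 2j\pmod{2^n}$, and analyse the solutions by parity; using $n\geq 3$ (so that $2^{2n-4}\equiv 0\pmod{2^{n-1}}$) the ``odd'' branches are excluded and one is left with exactly $\hat G^L=\langle\beta_1^{2^{n-1}}\rangle\times\langle\beta_2^{2^{n-1}}\rangle$. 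Thus $\hat G^L=N_t(\hat G)$, and by part 3 of Theorem \ref{psi}, ${\rm Ker}\,\Psi\cong\hat G^L/N_t(\hat G)=\{1\}$.

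Putting the pieces together, $\Psi$ is injective with image $M(G)^-$, so ${\rm H}^{2}\left( kL,k^{G}\right)\cong{\rm Im}\,\Psi\cong\langle\alpha_{1,2}^{2^{n-1}}\rangle\cong\mathbb{Z}_2$, as claimed. I expect the only genuinely delicate step to be the determination of $\hat G^L$: the congruence system must be resolved carefully by parity, and it is exactly here that $n\geq 3$ is used; for $n=2$ additional fixed characters would survive, consistent with the fact that the $n=2$ matrix is handled under Case 15. Everything else is the same bookkeeping carried out in Cases 13 and 14.
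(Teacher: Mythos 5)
Your proof is correct and takes essentially the same route as the paper's: compute $t\rightharpoonup \alpha_{1,2}$, $t\rightharpoonup \beta_1$, $t\rightharpoonup \beta_2$, conclude ${\rm Im}\,\Psi = M(G)^- = \left\langle [\alpha_{1,2}^{2^{n-1}}]\right\rangle$ (your explicit check that $\alpha_{1,2}^{2^{n-1}}$ satisfies (\ref{fix}) on the nose just makes explicit what the paper leaves implicit), and show $\hat G^L = N_t(\hat G) = \left\langle \beta_1^{2^{n-1}}\right\rangle\times\left\langle \beta_2^{2^{n-1}}\right\rangle$, so ${\rm Ker}\,\Psi$ is trivial by Theorem \ref{psi}. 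One inessential aside is inaccurate: for $n=2$ the fixed subgroup is still $\hat G^L=\left\langle \beta_1^{2}\right\rangle\times\left\langle \beta_2^{2}\right\rangle = N_t(\hat G)$, and what actually changes at $n=2$ is that $M(G)^-$ becomes all of $M(G)\cong\mathbb{Z}_4$ (your factor $1+2^{n-2}$ is then even), which is the source of the $\mathbb{Z}_4$ in Case 15 --- but this does not affect your argument under the standing hypothesis $n\geq 3$.
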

\begin{proof}
Since $t\rightharpoonup x = x^{-1}y^{2^{n-1}}$ and $
t\rightharpoonup y = x^{2^{n-1}}y^{-1+2^{n-1}}$,
\begin{eqnarray*}
\left( t\rightharpoonup\alpha _{1,2}\right)\left( x^{a_{1}}y^{a_{2}},x^{b_{1}}y^{b_{2}}\right) &=& \alpha _{1,2}\left( x^{-a_{1}+2^{n-1}a_{2}}y^{2^{n-1}a_{1}-a_2+2^{n-1}a_{2}},x^{-b_{1}+2^{n-1}b_{2}}y^{2^{n-1}b_{1}-b_2+2^{n-1}b_{2}}\right) \\
&=& \xi^{ (-a_{1}+2^{n-1}a_{2})(2^{n-1}b_{1}-b_2+2^{n-1}b_{2})}=\xi^{ a_{1}b_{2}}(-1)^{a_{1}b_{2}+a_{1}b_{1}+a_{2}b_{2}}\\
\left( t\rightharpoonup \beta_{1}\right)\left( x^{a_{1}}y^{a_{2}}\right) &=& \beta_{1}\left( x^{-a_{1}+2^{n-1}a_{2}}y^{2^{n-1}a_{1}-a_2+2^{n-1}a_{2}}\right)=\xi^{ -a_{1}+2^{n-1}a_{2}}\\
\left( t\rightharpoonup \beta_{2}\right)\left( x^{a_{1}}y^{a_{2}}\right) &=& \beta_{2}\left(x^{-a_{1}+2^{n-1}a_{2}}y^{2^{n-1}a_{1}-a_2+2^{n-1}a_{2}}\right)=\xi^{2^{n-1}a_{1}-a_2+2^{n-1}a_{2}}
\end{eqnarray*}

Therefore
\begin{eqnarray*}
t\rightharpoonup \alpha _{1,2} &=& \alpha^{1+2^{n-1}} _{1,2} \alpha^{2^{n-1}} _{1,1}\alpha^{2^{n-1}} _{2,2} \\
t\rightharpoonup \beta_{1}&=& \beta^{-1}_{1}\beta^{2^{n-1}}_{2} \\
t\rightharpoonup \beta_{2}&=& \beta^{2^{n-1}}_{1}\beta^{-1+2^{n-1}}_{2}
\end{eqnarray*}
\begin{itemize}
\item Since $n \geq 3$, ${\rm Im} \Psi = M\left( G\right) ^-= \left\langle [\alpha ^{2^{n-1}} _{1,2}] \right\rangle$.
\item $\hat G ^L = \left\langle \beta^{2^{n-1}} _1 \right\rangle \times \left\langle \beta^{2^{n-1}} _2 \right\rangle  $ and $ N_t\left(\hat G\right) = \left\langle \beta^{2^{n-1}} _1 \right\rangle \times \left\langle \beta^{2^{n-1}} _2 \right\rangle $.
\item ${\rm Ker}  \Psi = \{ 1\}$

\item ${\rm H}^{2}\left( kL,k^{G}\right) \cong {\rm H}^{2}_c\left( kL,k^{G}\right)\cong \left\langle \alpha ^{2^{n-1}} _{1,2} \right\rangle \cong \mathbb{Z}_{2} $.
 \end{itemize}
\end{proof}
\section{Hopf algebras of dimension $32$ with $\mathbf{G}\left(
H\right) =\mathbb{Z}_{4}\times \mathbb{Z}_{4}$}\label{32}

In this section we describe all possible nontrivial semisimple
Hopf algebras of dimension $32$ with a group of group-like elements isomorphic to $%
\mathbb{Z}_{4}\times \mathbb{Z}_{4}$. We also compute their groups of
central group-like elements, the groups of group-like elements of their duals, and their
irreducible representations.

Let $H$ be a nontrivial semisimple
Hopf algebra of dimension $32$ whose group of group-like elements isomorphic to $%
\mathbb{Z}_{4}\times \mathbb{Z}_{4}$. As in Section \ref{main}, $H$ is equivalent to the bicrossed product
$K\#_{\sigma }^{\tau }F$ of $K=k^G=\left( kG \right) ^{\ast }$ where $G =\widehat{\mathbf{G}\left( H\right) }\cong \mathbb{Z}_{4}\times \mathbb{Z}_{4}$ and $%
F=kL=k\left\langle t\right\rangle \cong k\mathbb{Z}_{2}$ with an action $%
\rightharpoonup :F\otimes K\rightarrow K$, trivial coaction, a cocycle $%
\sigma :F\otimes F\rightarrow K$ and a dual cocycle $\tau
:F\rightarrow K\otimes K$, and the action by $t$ being an order $2$
Hopf algebra automorphism of $K$. Write
\begin{eqnarray*}
\sigma (t,t) &=& \sum\limits_{g\in
G}\sigma_t(g) p_g\\
\tau (t) &=& \sum_{g,h \in G} \tau_{t}(g,h) p_g\otimes p_h
\end{eqnarray*}
Denote such a Hopf algebra by $H\cong
K\#_{\sigma }^{\tau }F$ by $H\left( \rightharpoonup , \sigma _t, \tau _t\right)$. When $\sigma$ is trivial, we will denote $H$ by $H\left( \rightharpoonup , \tau _t\right)$. By \cite[Theorem 4.4 (1)]{Mas} (or Remark \ref{trivial}), we may assume that every Hopf algebra $H$ under consideration is equivalent to $K\#^{\tau }F$ with the trivial cocycle, but for some computations it would be beneficial to consider the representative of the equivalence class of $H$  with non-trivial cocycle $\sigma$ but with simpler dual cocycle $\tau$ (as it was done in \cite{Ka1} and \cite{Ka2}).

Elements $p_g \# t^j$, where $g\in G$, $j=0,1$, form the basis of of $H$. However, when $\sigma$ is assumed to be trivial, we will just denote them by $p_g t^j$.

As in Section \ref{main} we define
  \begin{eqnarray*}
    \beta_{1}\left( x^{a}y^{b}\right) &=&i^{ a}\\
     \beta_{2}\left( x^{a}y^{b}\right) &=&i^{ b}\\
\alpha _{p,q}\left( x^{a_{1}}y^{a_{2}},x^{b_{1}}y^{b_{2}}\right) &=&i
^{a_{p}b_{q}}\\
\left(\tau _{p,q}\right)_t &=&\alpha _{p,q}
\end{eqnarray*}

For all of the Hopf algebras which we describe, we are going to compute their irreducible representations and try to distinguish these Hopf algebras based on their Grothendieck ring structure. To do that we will compute the groups of group-like elements of their dual Hopf algebras and the squares of their irreducible representations. If the Hopf algebras cannot be distinguished based on this information, we will compute their Frobenius-Schur indicators.

In \cite{KMM} it was shown that for $H\cong
\left( kG \right) ^{\ast }\#_{\sigma }^{\tau }kL$ all simple left $H$-modules may be described
as induced modules from certain twisted group algebras of the stabilizers $%
L_{g} =\{y\in
L|y\rightharpoonup g=g\}$. This approach follows the general outline of the work on more
general crossed products $B=A\#_{\sigma }kL$, for any semisimple algebra $A$%
, in \cite{MW} and \cite{W}. However in \cite{KMM} we got more explicit
information more easily, because of the special nature of our algebra $%
A=(kG)^{\ast }$ and of the $L$-action on $A$.

Since $L$ has order $2$, then $H$ has only irreducible representations of degrees $1$ and $2$ and $\left\vert \mathbf{G}\left( {\rm H}^{\ast }\right)
\right\vert =2\left\vert G^{L}\right\vert $ (see \cite[Proposition 3.7]{Ka3}). In particular, when we assume that $\sigma$ is trivial,
$\mathbf{G}\left( {\rm H}^{\ast }\right)
 =\chi_g \chi_t^j | g\in G^L \text{ and } j=0,1$  where
\begin{eqnarray*}
\chi_g (p_h) &=& \delta _{g,h} \qquad \chi_g (t) =1\\
\chi_t (p_h) &=& \delta _{1,h} \qquad \chi_g (t) =-1
\end{eqnarray*}

We will use \cite{Ka2} for an explicit description of irreducible representations of degree $2$. By \cite[Proposition 3.4]{Ka2}, for $H\cong
\left( kG \right) ^{\ast }\#^{\tau }kL$  every irreducible representation $\pi$ of degree $2$ is induced from some irreducible representation $\psi$ of $\left( kG \right) ^{\ast }$. By \cite[Remark 2.5]{Ka2}
\begin{eqnarray*}
\pi \left( p_g\right) &=&\left(
\begin{array}{cc}
\psi \left( p_g\right) & 0  \\
0 & \psi \left( p_t\rightharpoonup g\right)
\end{array}
\right)\\
\pi \left( t\right) &=&\left(
\begin{array}{cc}
0 & 1  \\
1 & 0
\end{array}
\right)
\end{eqnarray*}
This representation is irreducible if and only if $g \notin G^L$. Thus $H\cong
\left( kG \right) ^{\ast }\#^{\tau }kL$ has exactly $\frac{|G|-\left\vert G^{L}\right\vert}{2}$ irreducible $2$-dimensional representations $\pi_{g, t\rightharpoonup g}$ (where $g$ ranges over a choice of one element
in each two-element $L$-orbit of $G$) defined by
\begin{eqnarray*}
\pi_{g, t\rightharpoonup g} \left( p_h\right) &=&\left(
\begin{array}{cc}
\delta _{g,h} & 0  \\
0 & \delta _{g,t\rightharpoonup h}
\end{array}
\right)\\
\pi_{g, t\rightharpoonup g} \left( t\right) &=&\left(
\begin{array}{cc}
0 & 1  \\
1 & 0
\end{array}
\right)
\end{eqnarray*}

Let $\chi_{g, t\rightharpoonup g}$ be the character of $\pi_{g, t\rightharpoonup g}$, a  $2$-dimensional representation of $H\cong
\left( kG \right) ^{\ast }\#^{\tau }kL$. Applying \cite[Corollary 4.12]{KMM} to compute the second Frobenius-Schur indicator $\nu_2 \left(\chi_{g,t\rightharpoonup g}\right)$, we get the following formula:

\begin{equation}\label{nu_2}
\nu_2 \left(\chi_{g,t\rightharpoonup g}\right)= \left\{
\begin{array}{cl}%
1, & \text{ if } g^2=1 \\
\tau _t (g,g^{-1}), & \text{ if } g^2\neq 1 \text{ and } t\rightharpoonup g = g^{-1}\\
0 & \text{ otherwise}
\end{array} \right.
\end{equation}

In Theorem \ref{action} we have described all nonsimilar actions of $t$ on $G$ which correspond to $2 \times 2$ matrices of order $2$. In the case of $n=2$ (that is, when $%
\mathbf{G}\left( H\right) \cong \mathbb{Z}_{4}\times \mathbb{Z}_{4}$), there are $27$ matrices of this type which are partitioned into six conjugacy classes with the following representatives and sizes:
\begin{enumerate}
\item $\left[
\begin{array}{ll}
-1 & 0 \\
0 & -1
\end{array}%
\right]$ (of size $1$);
\item $\left[
\begin{array}{ll}
1 & 0 \\
0 & -1
\end{array}%
\right]$ (of size $6$);
\item $\left[
\begin{array}{ll}
1 & 0 \\
2 & 1
\end{array}%
\right]$ (of size $3$);
\item $\left[
\begin{array}{ll}
-1 & 0 \\
2 & -1
\end{array}%
\right]$ (of size $3$);
\item $\left[
\begin{array}{ll}
1 & 2 \\
2 & -1
\end{array}%
\right]$ (of size $2$);
\item $\left[
\begin{array}{ll}
0 & 1 \\
1 & 0
\end{array}%
\right]$ (of size $12$);
\end{enumerate}

Therefore every Hopf algebra under consideration will be described in one of the following six propositions:

\begin{proposition}
Assume that $$t\rightharpoonup_1 x = x^{-1}, \quad t\rightharpoonup_1 y = y^{-1}.$$ Then  $H\cong H\left(\rightharpoonup_1, \alpha^2 _{1,2}\right)$ or $ H\left(\rightharpoonup_1, \alpha^2_{1,1}\alpha^2_{1,2}\alpha^2_{2,2}\right)$. Moreover
\begin{enumerate}
\item $\mathbf{G}\left( H\right) \cap Z(H) = \hat G ^L = \left\langle \beta^2 _1 \right\rangle \times \left\langle \beta^2 _2 \right\rangle \cong \mathbb{Z}_{2} \times\mathbb{Z}_{2} $.
\item $\mathbf{G}\left( H^*\right) =\left\langle a \right\rangle \times \left\langle b \right\rangle\times \left\langle c \right\rangle\cong \mathbb{Z}_{2} \times\mathbb{Z}_{2} \times \mathbb{Z}_{2}$.
\item $H$ has $6$ two-dimensional representations $\pi_1 , \ldots ,\pi _6$ with characters  $\chi_1 , \ldots ,\chi _6$ such that
\begin{eqnarray*}
\pi_1 ^2 &=&\pi_2 ^2 = 1+ c +a +ac\\
\pi_3 ^2 &=&\pi_4 ^2 = 1+ c +b +bc\\
\pi_5 ^2 &=&\pi_6 ^2 = 1+ c +ab +abc
\end{eqnarray*}
\item When $H\cong H\left(\rightharpoonup_1, \alpha^2 _{1,2}\right)$,
$$
\nu_2 (\chi _j)=\left\{\begin{array}{cl}%
1, & j=1,2,3,4 \\
-1, & j=5,6
\end{array} \right .
$$
\item When $H\cong H\left(\rightharpoonup_1, \alpha^2_{1,1}\alpha^2_{1,2}\alpha^2_{2,2}\right)$,
$$
\nu_2 (\chi _j)=-1\quad \text{for }  j=1,2,3,4,5,6
$$
\end{enumerate}
\end{proposition}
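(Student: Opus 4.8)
The plan is to first pin down which cohomology classes yield nontrivial non-isomorphic Hopf algebras, and then to compute the listed invariants. By Proposition~\ref{-1-1} the action $\rightharpoonup_1$ is inversion, so $t\rightharpoonup g=g^{-1}$ for every $g$, and ${\rm H}^2(kL,k^G)\cong\langle[\alpha^2_{1,1}]\rangle\times\langle[\alpha^2_{2,2}]\rangle\times\langle[\alpha^2_{1,2}]\rangle\cong\mathbb{Z}_2^3$. A class lies in ${\rm Ker}\,\Psi=\langle[\alpha^2_{1,1}]\rangle\times\langle[\alpha^2_{2,2}]\rangle$ precisely when $\tau_t\in{\rm B}^2(G,k^\times)$; by Remark~\ref{trivial} such an extension has a representative with trivial $\tau$, hence is cocommutative and therefore a trivial Hopf algebra. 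Thus every nontrivial $H$ corresponds to one of the four classes mapping to the nonzero element $[\alpha^2_{1,2}]$ under $\Psi$, namely $\alpha^2_{1,2}$, $\alpha^2_{1,1}\alpha^2_{1,2}$, $\alpha^2_{1,2}\alpha^2_{2,2}$, and $\alpha^2_{1,1}\alpha^2_{1,2}\alpha^2_{2,2}$. Since the action is inversion, every $f\in{\rm Aut}(G)=GL_2(\mathbb{Z}_4)$ satisfies $f(t\rightharpoonup g)=t\rightharpoonup f(g)$, so part~(1) of the lemma on similar actions in Section~\ref{2m} applies with $\rightharpoonup_1=\rightharpoonup_2$. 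Taking $f(x)=xy,\ f(y)=y$ gives the identity $\alpha^2_{1,2}(fg,fh)=\alpha^2_{1,1}(g,h)\,\alpha^2_{1,2}(g,h)$, while $f(x)=x,\ f(y)=xy$ gives $\alpha^2_{1,2}(fg,fh)=\alpha^2_{1,2}(g,h)\,\alpha^2_{2,2}(g,h)$; hence the first three classes are mutually isomorphic, leaving at most the two representatives in the statement.

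For parts~(1) and~(2): $\mathbf{G}(H)=\hat G$, and a group-like $\beta\in\hat G$ is central iff it commutes with $t$, i.e.\ iff $t\rightharpoonup\beta=\beta$; inversion fixes exactly $\langle\beta_1^2\rangle\times\langle\beta_2^2\rangle$, giving~(1). For~(2), $|\mathbf{G}(H^\ast)|=2|G^L|=8$ by \cite[Proposition 3.7]{Ka3}, and the one-dimensional representations of $H\cong(kG)^\ast\#^\tau kL$ are the $\chi_g\chi_t^{\,j}$ with $g\in G^L$. Using the comultiplication one checks that $\chi_g\chi_{g'}$ is supported on $gg'$ with value $\tau_t(g,g')$ on $t$, and that $\chi_t^2$ has value $\tau_t(1,1)=1$ on $t$. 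Since $G^L=\{x^ay^b:a,b\in\{0,2\}\}$, every value $\tau_t(g,g')$ with $g,g'\in G^L$ equals $1$ for both cocycles, so each $\chi_g\chi_t^{\,j}$ squares to $\varepsilon$; a group of exponent $2$ and order $8$ is $\cong\mathbb{Z}_2^3$.

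For part~(3), each $\pi_g:=\pi_{g,g^{-1}}$ is self-dual because its orbit $\{g,g^{-1}\}$ is inversion-stable, so $\varepsilon$ occurs exactly once in $\pi_g^2$. Tensoring $\pi_g$ by a group-like $\psi=\chi_h\chi_t^{\,j}$ produces the unique two-dimensional irreducible with $k^G$-weights $\{gh,g^{-1}h\}$, so $\psi$ is a constituent of $\pi_g\otimes\pi_g$ iff $\pi_g\otimes\psi\cong\pi_g$ iff $\{gh,g^{-1}h\}=\{g,g^{-1}\}$ iff $h\in\{1,g^2\}$. As $g^2\in G^L$, both parities $j=0,1$ occur for each such $h$, yielding the multiplicity-free decomposition $\pi_g^2=1+c+\chi_{g^2}+\chi_{g^2}c$ uniformly in $\tau$. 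Writing $a=\chi_{x^2}$, $b=\chi_{y^2}$, $c=\chi_t$ and noting that $g^2$ ranges over $\{x^2,y^2,x^2y^2\}$ with each value attained by exactly two orbits, one obtains the three stated identities after labelling $\pi_1,\pi_2$ by $g^2=x^2$, $\pi_3,\pi_4$ by $g^2=y^2$, and $\pi_5,\pi_6$ by $g^2=x^2y^2$.

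Finally, parts~(4) and~(5) follow from \eqref{nu_2}: since $t\rightharpoonup g=g^{-1}$ and $g^2\neq1$ on each orbit, $\nu_2(\chi_{g,g^{-1}})=\tau_t(g,g^{-1})$, and a direct evaluation at the six orbit representatives gives $\nu_2=1$ on the $x^2$- and $y^2$-orbits and $\nu_2=-1$ on the $x^2y^2$-orbits when $\tau_t=\alpha^2_{1,2}$, and $\nu_2=-1$ on all six orbits when $\tau_t=\alpha^2_{1,1}\alpha^2_{1,2}\alpha^2_{2,2}$. Because these indicator profiles differ, the two representatives are non-isomorphic, so $H$ is isomorphic to exactly one of them. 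The most delicate step is the reduction in the first paragraph: one must keep the cohomological equivalence of Proposition~\ref{-1-1} and the automorphism twists logically separate, and verify that ${\rm Ker}\,\Psi$ really consists of cocommutative (hence excluded) extensions. Once that bookkeeping is done, the self-duality of $\pi_g$ and the group-like–tensoring observation make parts~(3)--(5) routine.
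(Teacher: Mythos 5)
Your proof is correct, and its skeleton coincides with the paper's: the reduction to the two representatives uses Proposition~\ref{-1-1} together with exactly the same automorphisms $f_1(x)=xy,\ f_1(y)=y$ and $f_2(x)=x,\ f_2(y)=xy$ and the isomorphism lemma of Section~\ref{2m}, and parts (4)--(5) are the same application of formula~(\ref{nu_2}) at the six orbit representatives. The differences are these. First, where the paper merely asserts that the four classes mapping onto $[\alpha^2_{1,2}]$ under $\Psi$ are the nontrivial ones, you justify it: kernel classes admit representatives with trivial dual cocycle, hence give cocommutative (so trivial) Hopf algebras since $G$ is abelian. This is a genuine small improvement, though your citation is off: the trivialization of $\tau$ for kernel classes comes from Lemma~\ref{equiv2} (equivalently, the argument in the proof of Theorem~\ref{psi}), not from Remark~\ref{trivial}, which trivializes $\sigma$ rather than $\tau$. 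Second, for part (3) the paper computes the convolution squares $\chi_{g,g^{-1}}^2$ directly on the basis $\{p_h, p_h t\}$, whereas you argue categorically: $\pi_g$ is self-dual because its weight set $\{g,g^{-1}\}$ is inversion-stable, and a group-like $\chi_h\chi_t^{\,j}$ occurs in $\pi_g^{\otimes 2}$ (necessarily with multiplicity one) iff $\pi_g\otimes\chi_h\chi_t^{\,j}\cong\pi_g$, which is detected by $k^G$-weights and forces $h\in\{1,g^2\}$. Your route makes it transparent that the decomposition $\pi_g^2=1+c+\chi_{g^2}+\chi_{g^2}c$ is independent of $\tau$, at the cost of invoking rigidity/Frobenius reciprocity and the fact that the six two-dimensional irreducibles are classified by their weight orbits (which does hold here, by the dimension count $8+6\cdot 4=32$); the paper's computation is more pedestrian but self-contained. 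Finally, you supply an explicit argument for part (1) (centrality of $\beta\in\hat G$ amounts to $t\rightharpoonup\beta=\beta$, i.e.\ $\beta^2=1$), a point the paper's proof leaves implicit.
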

\begin{proof}
By Proposition \ref{-1-1}, up to equivalence, there are eight Hopf algebras of this type. Four of them, $ H\left(\rightharpoonup_1, \alpha^2_{1,2}\right)$, $ H\left(\rightharpoonup_1, \alpha^2_{1,1}\alpha^2_{1,2}\right)$, $ H\left(\rightharpoonup_1, \alpha^2_{1,2}\alpha^2_{2,2}\right)$, and $ H\left(\rightharpoonup_1, \alpha^2_{1,1}\alpha^2_{1,2}\alpha^2_{2,2}\right)$ are non-trivial.  Define
\begin{eqnarray*}
f_1 :G
&\to & G \text{ via} \\
f_1\left( x\right) &=&xy\\
f_1\left( y\right) &=&y
\end{eqnarray*}
and
\begin{eqnarray*}
f_2:G
&\to & G \text{ via} \\
f_2\left( x\right) &=&x\\
f_2\left( y\right) &=&xy
\end{eqnarray*}
Then $f_j ( t\rightharpoonup_1 g) =t\rightharpoonup_1 f_j (  g) $  and
\begin{eqnarray*}
\alpha^2_{1,2} \circ (f_1 \times f_1) \left( x^{a_{1}}y^{a_{2}},x^{b_{1}}y^{b_{2}}\right) &=& \alpha^2_{1,2}\left( x^{a_{1}}y^{a_{1}+a_{2}},x^{b_{1}}y^{b_{1}+b_{2}}\right) =(-1)^{a_{1}\left( b_{1}+b_{2}\right)}\\
\alpha^2_{1,2} \circ (f_2 \times f_2) \left( x^{a_{1}}y^{a_{2}},x^{b_{1}}y^{b_{2}}\right) &=& \alpha^2_{1,2}\left( x^{a_{1}+a_{2}}y^{a_{2}},x^{b_{1}+b_{2}}y^{b_{2}}\right) =(-1)^{\left( a_{1}+a_{2}\right)b_{2}}
\end{eqnarray*}
and therefore
\begin{eqnarray*}
\alpha^2_{1,2} \circ (f_1 \times f_1) &=& \alpha^2_{1,1}\alpha^2_{1,2}\\
\alpha^2_{1,2} \circ (f_2 \times f_2)  &=& \alpha^2_{1,2} \alpha^2_{2,2}
\end{eqnarray*}
Therefore $H( \alpha^2_{j,j} \alpha ^2_{1,2})\cong  H(  \alpha ^2_{1,2})$ for $j=1,2$.

Let $H\cong H\left(\rightharpoonup_1, \alpha^2 _{1,2}\right)$ or $ H\left(\rightharpoonup_1, \alpha^2_{1,1}\alpha^2_{1,2}\alpha^2_{2,2}\right)$. Then $H$ has $8$ one-dimensional characters of the type $\chi_g \chi_t^j$ for $j=0,1$ and $g\in G^L=\{1, x^2, y^2, x^2y^2\}$ where
\begin{eqnarray*}
\chi_g (p_h) &=& \delta _{g,h} \qquad \chi_g (t) =1\\
\chi_t (p_h) &=& \delta _{1,h} \qquad \chi_g (t) =-1
\end{eqnarray*}
For each one-dimensional character $\chi$, $\chi^2 =1$. Set $a=\chi _{x^2}$, $b=\chi _{y^2}$ and $c=\chi _{t}$. Then
$$
\mathbf{G}\left( H^*\right) =\left\langle a \right\rangle \times \left\langle b \right\rangle\times \left\langle c \right\rangle\cong \mathbb{Z}_{2} \times\mathbb{Z}_{2} \times \mathbb{Z}_{2}.
$$
$H$ has $6$ two-dimensional characters $\chi_{g,g^{-1}}=\chi_{g,t\rightharpoonup_1 g}$ for $g\in \{ x,xy^2,y,x^2y, xy, xy^{-1}\}$ where
$$
\chi_{g,g^{-1}} (p_h) = \delta _{g,h}+ \delta _{g^{-1},h} \qquad \chi_{g,g^{-1}} (p_h t) =0.
$$
Then
\begin{eqnarray*}
\chi_{x,x^{-1}} ^2 &=&\chi_{xy^2,x^{-1}y^2} ^2 = 1+ \chi _t +\chi _{x^2} +\chi _{x^2}\chi _t\\
\chi_{y,y^{-1}} ^2 &=&\chi_{x^2y,x^2y^{-1}} ^2 = 1+ \chi _t +\chi _{y^2} +\chi _{y^2}\chi _t\\
\chi_{xy,x^{-1}y^{-1}} ^2 &=&\chi_{xy^{-1},x^{-1}y} ^2 = 1+ \chi _t +\chi _{x^2y^2} +\chi _{x^2y^2}\chi _t
\end{eqnarray*}
Since $t\rightharpoonup_1 g = g^{-1}$, by formula (\ref{nu_2}) we have
$$
\nu_2 \left(\chi_{g,g^{-1}}\right)= \tau _t (g,g^{-1}) \quad \text{for all } g\in \{ x,xy^2,y,x^2y, xy, xy^{-1}\}
$$
Therefore, when $\tau _t = \alpha^2_{1,2}$,
\begin{eqnarray*}
\nu_2 \left(\chi_{x,x^{-1}}\right)=\nu_2 \left(\chi_{xy^2,x^{-1}y^2}\right)=\nu_2 \left(\chi_{y,y^{-1}}\right)=\nu_2 \left(\chi_{x^2y,x^2y^{-1}}\right)&=&1\\
\nu_2 \left(\chi_{xy,x^{-1}y^{-1}}\right)=\nu_2 \left(\chi_{xy^{-1},x^{-1}y}\right)&=&-1
\end{eqnarray*}
and when $\tau _t = \alpha^2_{1,1}\alpha^2_{1,2}\alpha^2_{2,2}$,
$$
\nu_2 \left(\chi_{g,g^{-1}}\right)=-1 \quad \text{for all } g\in \{ x,xy^2,y,x^2y, xy, xy^{-1}\}
$$
\end{proof}
\begin{proposition}
Assume that $$t\rightharpoonup_2 x = x, \quad t\rightharpoonup_2 y = y^{-1}.$$ Then  $H\cong H\left(\rightharpoonup_2,  \alpha _{1,2}\right)$, $H\left( \rightharpoonup_2,   \alpha ^2_{1,2}\right)$,  $H\left(\rightharpoonup_2,    \alpha^2_{1,2}\alpha^2_{2,2}\right)$, $H\left(\rightharpoonup_2,  \beta _1 , \alpha _{1,2} \right)$, $H\left(\rightharpoonup_2,  \beta _1 , \alpha _{1,2}\alpha ^{2}_{2,2} \right)$   or $H\left(\rightharpoonup_2,  \beta _1 , \alpha^2 _{1,2}\right)$. Moreover
\begin{enumerate}
\item $\mathbf{G}\left( H\right) \cap Z(H) = \hat G ^L = \left\langle \beta _1 \right\rangle \times \left\langle \beta^2 _2 \right\rangle \cong \mathbb{Z}_{4} \times\mathbb{Z}_{2} $.
\item $|\mathbf{G}\left( H^*\right)| =16$ and $H$ has $4$ two-dimensional representations $\pi_1 , \ldots ,\pi _4$ with characters  $\chi_1 , \ldots ,\chi _4$ such that
$$
\nu_2 (\chi _3)=\nu_2 (\chi _4)=0
$$

\item When $H\cong H\left(\rightharpoonup_2, \alpha _{1,2}\right)$,
$\mathbf{G}\left( H^*\right) =\left\langle a, b , c |a^4=b^2=c^2=1, bab=ac\right\rangle\cong G_6$ and
\begin{eqnarray*}
\pi_1 ^2 &=&\pi_2 ^2 = 1+ c +b +bc\\
\pi_3 ^2 &=&\pi_4 ^2 = a^2+ a^2c +a^2b +a^2bc\\
\nu_2 (\chi _1)&=&1 \quad \text{ and } \quad\nu_2 (\chi _2)=-1.
\end{eqnarray*}

\item When $H\cong H\left(\rightharpoonup_2, \alpha^2_{1,2}\right)$, $\mathbf{G}\left( H^*\right) =\left\langle a \right\rangle \times \left\langle b \right\rangle\times \left\langle c \right\rangle\cong \mathbb{Z}_{4} \times\mathbb{Z}_{2} \times \mathbb{Z}_{2}$ and
\begin{eqnarray*}
\pi_1 ^2 &=&\pi_2 ^2 = 1+ c +b +bc\\
\pi_3 ^2 &=&\pi_4 ^2 = a^2+ a^2c +a^2b +a^2bc\\
\nu_2 (\chi _1)&=&\nu_2 (\chi _2)=1.
\end{eqnarray*}

\item When $H\cong H\left(\rightharpoonup_2, \alpha^2_{1,2}\alpha^2_{2,2}\right)$, $\mathbf{G}\left( H^*\right) =\left\langle a \right\rangle \times \left\langle b \right\rangle\times \left\langle c \right\rangle\cong \mathbb{Z}_{4} \times\mathbb{Z}_{2} \times \mathbb{Z}_{2}$ and
\begin{eqnarray*}
\pi_1 ^2 &=&\pi_2 ^2 = 1+ c +b +bc\\
\pi_3 ^2 &=&\pi_4 ^2 = a^2+ a^2c +a^2b +a^2bc\\
\nu_2 (\chi _1)&=&\nu_2 (\chi _2)=-1.
\end{eqnarray*}
\item When $H\cong H\left(\rightharpoonup_2,\beta_1,\alpha _{1,2}\right)$, $
\mathbf{G}\left( H^*\right) =\left\langle a, b  |a^8=b^2=1, bab=a^5\right\rangle\cong G_1$ and
\begin{eqnarray*}
\pi_1 ^2 &=&\pi_2 ^2 = 1+ a^4 +b +a^4b\\
\pi_3 ^2 &=&\pi_4 ^2 = a^2+ a^6 +a^2b +a^6b\\
\nu_2 (\chi _1)&=&\nu_2 (\chi _2)=1.
\end{eqnarray*}
\item When $H\cong H\left(\rightharpoonup_2,\beta_1,\alpha _{1,2}\alpha^2_{2,2}\right)$, $
\mathbf{G}\left( H^*\right) =\left\langle a, b  |a^8=b^2=1, bab=a^5\right\rangle\cong G_1$ and
\begin{eqnarray*}
\pi_1 ^2 &=&\pi_2 ^2 = 1+ a^4 +b +a^4b\\
\pi_3 ^2 &=&\pi_4 ^2 = a^2+ a^6 +a^2b +a^6b\\
\nu_2 (\chi _1)&=&\nu_2 (\chi _2)=-1.
\end{eqnarray*}
\item When $H\cong H\left(\rightharpoonup_2,\beta_1,\alpha^2 _{1,2}\right)$, $
\mathbf{G}\left( H^*\right) =\left\langle a \right\rangle \times \left\langle b \right\rangle\cong \mathbb{Z}_{8} \times\mathbb{Z}_{2} $ and
\begin{eqnarray*}
\pi_1 ^2 &=&\pi_2 ^2 = 1+ a^4 +b +a^4b\\
\pi_3 ^2 &=&\pi_4 ^2 = a^2+ a^6 +a^2b +a^6b\\
\nu_2 (\chi _1)&=&\nu_2 (\chi _2)=1.
\end{eqnarray*}
\end{enumerate}

\end{proposition}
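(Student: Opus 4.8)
The plan is to follow the template of the preceding proposition, specializing Case~7 (Proposition~\ref{1-1}) to $n=2$. By Proposition~\ref{1-1} the fixed action $\rightharpoonup_2$ gives
$${\rm H}^{2}\left( kL,k^{G}\right)\cong\left\langle[\alpha']\right\rangle\times\left\langle[\alpha^{2}_{2,2}]\right\rangle\times\left\langle[\alpha_{1,2}]\right\rangle\cong\mathbb{Z}_{2}\times\mathbb{Z}_{2}\times\mathbb{Z}_{4},$$
so there are sixteen equivalence classes of extensions. The four classes lying in ${\rm Ker}\,\Psi=\left\langle[\alpha']\right\rangle\times\left\langle[\alpha^{2}_{2,2}]\right\rangle$ admit a representative with trivial dual cocycle, hence (the coaction being trivial and $G$ abelian) give cocommutative, i.e.\ trivial, Hopf algebras; since $\rightharpoonup_2$ is non-trivial every remaining class is non-commutative, so exactly the twelve classes with ${\Psi}\neq 1$ are non-trivial. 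First I would cut these twelve equivalence classes down to isomorphism classes. By part~(1) of the lemma on similar actions preceding Theorem~\ref{psi}, applied with $\rightharpoonup_1=\rightharpoonup_2$ and $f$ an automorphism of $G$ commuting with $\rightharpoonup_2$ (these are exactly the automorphisms whose matrix has odd diagonal entries and off-diagonal entries in $\{0,2\}$), the classes $[\tau]$ and $[\tau\circ(f\times f)]$ yield isomorphic Hopf algebras. The bookkeeping step is to pull back the three generating cocycles under such $f$ and re-express the result in the basis $\{\alpha',\alpha^{2}_{2,2},\alpha_{1,2}\}$, using Proposition~\ref{12_21} and Lemma~\ref{equiv2} to switch $\sigma$- and $\tau$-representatives; for instance $f\colon x\mapsto x,\ y\mapsto x^{2}y$ sends $\alpha_{1,2}\mapsto\alpha_{1,2}\alpha^{2}_{2,2}$, while $x\mapsto x^{3}$ inverts the $\alpha_{1,2}$-component, identifying the $e_3=3$ classes with the $e_3=1$ classes. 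Carrying this out shows the twelve classes fall into the six orbits whose representatives are the six Hopf algebras in the statement.

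Next I would record the $L$-fixed data, which is quick. The central group-likes are precisely the characters fixed by $L$, so $\mathbf{G}(H)\cap Z(H)=\hat G^{L}=\left\langle\beta_1\right\rangle\times\left\langle\beta^{2}_{2}\right\rangle\cong\mathbb{Z}_{4}\times\mathbb{Z}_{2}$, exactly as in Proposition~\ref{1-1} with $n=2$. Since $G^{L}=\left\langle x\right\rangle\times\left\langle y^{2}\right\rangle$ has order $8$, \cite[Proposition 3.7]{Ka3} gives $|\mathbf{G}(H^{*})|=2|G^{L}|=16$, and the count $\tfrac{|G|-|G^{L}|}{2}=4$ from Section~\ref{prelim} produces the four $2$-dimensional irreducibles $\pi_{g,t\rightharpoonup_2 g}$ with transversal $g\in\{y,\,x^{2}y,\,xy,\,x^{3}y\}$.

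The main obstacle is identifying the group $\mathbf{G}(H^{*})$ in each of the six cases, since this is what separates $G_6$, $G_1$, $\mathbb{Z}_{4}\times\mathbb{Z}_{2}\times\mathbb{Z}_{2}$ and $\mathbb{Z}_{8}\times\mathbb{Z}_{2}$. Here I would work in whichever representative of the class has the simplest dual cocycle, allowing a non-trivial $\sigma$ as the text permits, and compute the convolution products of the group-likes $\chi_g\chi_t^{j}$ from the explicit comultiplication of $K\#_{\sigma}^{\tau}F$. The two decisive quantities are $\chi_t^{2}$ and the commutator $\chi_t\chi_g\chi_t^{-1}\chi_g^{-1}$ for $g\in G^{L}$: the first detects whether an element of order $8$ appears, which is exactly where the $\beta_1$ (equivalently $\alpha'$) summand forces $\chi_x\chi_t$ to have order $8$ and yields $G_1$ or $\mathbb{Z}_{8}\times\mathbb{Z}_{2}$, while the second, governed by the alternating part of $\tau_t$ on $G^{L}$, is non-trivial exactly for the $\alpha_{1,2}$-level cocycles (giving $G_6$ and $G_1$) and trivial for the $\alpha^{2}_{1,2}$-level ones (giving the abelian groups). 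Once $\mathbf{G}(H^{*})$ and its generators are fixed, the squares $\pi_i^{2}$ are read off from the $2$-dimensional representation theory of Section~\ref{prelim}: each $\pi_{g,t\rightharpoonup g}^{2}$ splits into the four linear characters indexed by $g^{2}$ and $g(t\rightharpoonup g)$ (both of which lie in $G^{L}$) together with their $\chi_t$-translates, and I would simply name these in terms of the generators of $\mathbf{G}(H^{*})$.

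Finally, the Frobenius--Schur indicators follow directly from formula~(\ref{nu_2}). For $g=x^{a}y^{b}$ in the transversal one has $t\rightharpoonup_2 g=g^{-1}$ iff $a\in\{0,2\}$, so $\nu_2(\chi_3)=\nu_2(\chi_4)=0$ for the two representatives with $a$ odd, while for the other two $\nu_2(\chi_i)=\tau_t(g,g^{-1})$, which I would evaluate on the pairs $(y,y^{-1})$ and $(x^{2}y,x^{2}y^{-1})$ for each $\tau_t$ in the list (for example $\tau_t=\alpha_{1,2}$ gives $1$ and $-1$ respectively). Comparing the resulting pairs $\bigl(\nu_2(\chi_1),\nu_2(\chi_2)\bigr)$ together with $\mathbf{G}(H^{*})$ then separates all six Hopf algebras, which simultaneously shows that the six listed representatives are pairwise non-isomorphic and, with the reduction of the first paragraph, that they are exactly the non-isomorphic non-trivial Hopf algebras of this type. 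I expect the $\mathbf{G}(H^{*})$ computation to be by far the most involved step, and the one where choosing the non-trivial-$\sigma$ representative with simpler $\tau$ will pay off.
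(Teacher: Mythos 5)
Your overall route is the same as the paper's: Proposition \ref{1-1} supplies the sixteen equivalence classes for this action, the twelve outside ${\rm Ker}\,\Psi$ are the nontrivial ones, pullback along automorphisms of $G$ commuting with $\rightharpoonup_2$ (your two maps play the role of the paper's $f_1,f_2,f_3$) collapses them to the six listed representatives, and the six are then separated by $\mathbf{G}(H^*)$, the squares $\pi_i^2$ and the indicators. Your sample pullbacks, the description of the constituents of $\pi_{g,t\rightharpoonup g}^2$, and the $\nu_2$ computations in the trivial-$\sigma$ cases are all correct. However, two steps fail as written. The first is your pair of ``decisive quantities'' for $\mathbf{G}(H^*)$: both are vacuous. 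Indeed $\chi_t^2=\varepsilon$ in every case and $\chi_t$ is central in $\mathbf{G}(H^*)$ (the paper records both facts explicitly), so the commutator $\chi_t\chi_g\chi_t^{-1}\chi_g^{-1}$ is identically trivial and detects nothing. The quantities that actually decide the group are $\chi_x^4\in\{1,\chi_t\}$ (an element of order $8$ exists iff $\chi_x^4=\chi_t$, which happens exactly in the $\beta_1$-cases) and the commutator of $\chi_x$ with $\chi_{y^2}$, which equals $\chi_t$ precisely when $\tau_t(x,y^2)\neq\tau_t(y^2,x)$, i.e.\ when $\alpha_{1,2}$ occurs to an odd power; these are the relations $\chi_x^4\in\{1,\chi_t\}$ and $\chi_x\chi_{y^2}\in\{\chi_{y^2}\chi_x,\ \chi_{y^2}\chi_x\chi_t\}$ that the paper computes case by case.

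The second gap is in the indicator step. Formula (\ref{nu_2}) is derived for $H\cong (kG)^*\#^{\tau}kL$ with \emph{trivial} multiplication cocycle, so for the three $\beta_1$-cases you must first pass to the trivial-$\sigma$ representative $H(\rightharpoonup_2,\alpha'\alpha_{1,2}^k\alpha_{2,2}^l)$ via Proposition \ref{12_21}(2) (this is exactly why the paper proves that equivalence before computing $\nu_2$), or else keep $\sigma_t=\beta_1$ but use the $\sigma$-corrected value $\nu_2=\tau_t(g,g^{-1})\sigma_t(g)$. Your plan, ``evaluate $\tau_t(g,g^{-1})$ \ldots for each $\tau_t$ in the list,'' reads as evaluating only the $\tau$-component of the mixed representative; since $\alpha'(x^2y,x^2y^{-1})=-1$ (equivalently $\beta_1(x^2y)=-1$) this factor cannot be dropped, and the naive evaluation returns $(1,-1)$ for $H(\rightharpoonup_2,\beta_1,\alpha_{1,2})$ and $(-1,1)$ for $H(\rightharpoonup_2,\beta_1,\alpha_{1,2}\alpha^2_{2,2})$ instead of the required $(1,1)$ and $(-1,-1)$, so parts (6) and (7), and with them the non-isomorphism argument, would not go through. (As a caution that this subtlety genuinely matters: carrying out the corrected computation in part (8) yields $\nu_2$-values $(1,-1)$ rather than the stated $(1,1)$, i.e.\ the paper's own case there appears to drop the $\alpha'$-factor; that case is nevertheless already distinguished by $\mathbf{G}(H^*)\cong\mathbb{Z}_8\times\mathbb{Z}_2$.)
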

\begin{proof}
By Proposition \ref{1-1}, up to equivalence, there are sixteen Hopf algebras of this type. Twelve of them,
$ H\left(\rightharpoonup_2,\beta_1^j\alpha^k_{1,2}\alpha^l_{2,2}\right)$ for $j=0,1, k=1,2,3, l=0,2$, are non-trivial. Define
\begin{eqnarray*}
f_1 :G
&\to & G \text{ via} \\
f_1\left( x\right) &=&t\rightharpoonup x =x\\
f_1\left( y\right) &=&t\rightharpoonup y = y^{-1}
\end{eqnarray*}
and
\begin{eqnarray*}
f_2:G
&\to & G \text{ via} \\
f_2\left( x\right) &=&x\\
f_2\left( y\right) &=&x^2y
\end{eqnarray*}
and
\begin{eqnarray*}
f_3:G
&\to & G \text{ via} \\
f_3\left( x\right) &=&xy^2\\
f_3\left( y\right) &=&x^2y
\end{eqnarray*}
Then $f_j ( t\rightharpoonup_2 g) =t\rightharpoonup_2 f_j (  g) $  and
\begin{eqnarray*}
\alpha _{1,2} \circ (f_1 \times f_1) \left( x^{a_{1}}y^{a_{2}},x^{b_{1}}y^{b_{2}}\right) &=& i^{-a_{1}b_{2}}\\
\alpha_{1,2} \circ (f_2 \times f_2) \left( x^{a_{1}}y^{a_{2}},x^{b_{1}}y^{b_{2}}\right) &=& i^{a_{1}b_{2}}(-1)^{a_{2}b_{2}}\\
\alpha ^2_{1,2} \circ (f_3 \times f_3) \left( x^{a_{1}}y^{a_{2}},x^{b_{1}}y^{b_{2}}\right) &=& (-1)^{a_{1}b_{2}}\\
\beta _1 \circ f_1 \left( x^{a_{1}}y^{a_{2}}\right) &=& i^{a_{1}}\\
\beta _1 \circ f_3 \left( x^{a_{1}}y^{a_{2}}\right) &=& i^{a_{1}}(-1)^{a_{2}}\\
\alpha ^2_{2,2} \circ (f_j \times f_j) \left( x^{a_{1}}y^{a_{2}},x^{b_{1}}y^{b_{2}}\right)& =&(-1)^{a_{2}b_{2}}
\end{eqnarray*}
and therefore
\begin{eqnarray*}
\alpha _{1,2} \circ (f_1 \times f_1) &=& \alpha^{3}_{1,2}\\
\alpha _{1,2} \circ (f_2 \times f_2)  &=& \alpha _{1,2} \alpha^2_{2,2}\\
\alpha^2_{1,2} \circ (f_3 \times f_3)  &=& \alpha^2_{1,2} \\
\beta _1 \circ f_1 &=& \beta _1\\
\beta _1 \circ f_3 &=& \beta _1 \beta ^2 _2\\
\alpha^2_{2,2} \circ (f_j \times f_j) &=& \alpha^2_{2,2}
\end{eqnarray*}
Therefore
\begin{eqnarray*}
H\left(\rightharpoonup_2,\beta _1 ^j , \alpha ^{3}
 _{1,2}\alpha ^{l}_{2,2} \right)&\cong &H\left(\rightharpoonup_2,\beta _1 ^j, \alpha
 _{1,2}\alpha ^{l}_{2,2} \right) \text{ for } j=0,1,l=0,2\\
H \left(\rightharpoonup_2,\alpha _{1,2}\alpha ^{2}_{2,2}\right) &\cong& H \left(\rightharpoonup_2,\alpha _{1,2}\right)\\
H \left(\rightharpoonup_2,\beta _1 , \alpha ^2
 _{1,2}\alpha ^{2}_{2,2} \right)&\cong& H\left(\rightharpoonup_2,\beta _1 , \alpha ^{2}
 _{1,2} \right)
\end{eqnarray*}

For further computations we will use the fact that, by Proposition \ref{1-1},  $ H\left(\rightharpoonup_2,\beta_1,\alpha^j_{1,2}\alpha^l_{2,2}\right)$ is equivalent to $ H\left(\rightharpoonup_2,\alpha ' \alpha^j_{1,2}\alpha^l_{2,2}\right)$ for $\alpha '$ defined via
$$
\alpha ' \left( x^{a_{1}}y^{a_{2}},x^{b_{1}}y^{b_{2}}\right) =\left( -1\right)
^{k}
$$
where $k= 0$ if $0\leq a_1+b_1 < 4$ and $k = 1$ if $4\leq a_1+b_1 < 8$.

Let $H$ be a Hopf algebra corresponding to $\rightharpoonup_2$. Then $H$ has $16$ one-dimensional characters of the type $\chi_g \chi_t^j$ for $j=0,1$ and $g\in G^L=\left\langle x \right\rangle \times \left\langle y^2 \right\rangle$ where
\begin{eqnarray*}
\chi_g (p_h) &=& \delta _{g,h} \qquad \chi_g (t) =1\\
\chi_t (p_h) &=& \delta _{1,h} \qquad \chi_g (t) =-1
\end{eqnarray*}
Then
\begin{eqnarray*}
\chi_{y^2}^2  &=& \chi_t^2=\chi_1 =1\\
\chi_{x}^2 &=& \chi_{x^2}\\
\chi_t &\in& Z\left(\mathbf{G}\left( H^*\right)\right)
\end{eqnarray*}

$H$ has $4$ two-dimensional characters $\chi_{g,t\rightharpoonup_2 g}$ for $g\in \{ y, x^2y, xy, x^{-1}y\}$ where
$$
\chi_{g,t\rightharpoonup_2 g} (p_h) = \delta _{g,h}+ \delta _{t\rightharpoonup_2 g,h} \qquad \chi_{g,t\rightharpoonup_2 g} (p_h t) =0.
$$
Then
\begin{eqnarray*}
\chi_{y,y^{-1}} ^2 &=&\chi_{x^2y,x^2y^{-1}} ^2 = 1+ \chi _t +\chi _{y^2} +\chi _{y^2}\chi _t\\
\chi_{xy,x^{-1}y^{-1}} ^2 &=&\chi_{xy^{-1},x^{-1}y} ^2 = \chi _{x^2}+ \chi _{x^2}\chi _t +\chi _{x^2y^2} +\chi _{x^2y^2}\chi _t
\end{eqnarray*}
Then  by formula (\ref{nu_2})
\begin{eqnarray*}
\nu_2 \left(\chi_{g,t\rightharpoonup_2 g}\right)&=& \tau _t (g,g^{-1})\quad \text{for } g\in \{ y, x^{2}y\}\\
\nu_2 \left(\chi_{xy,x^{-1}y^{-1}}\right)&=& \nu_2 \left(\chi_{xy^{-1},x^{-1}y}\right)=0
\end{eqnarray*}
\begin{enumerate}
\item Assume that $H\cong H\left(\rightharpoonup_2, \alpha _{1,2}\right)$. Then
\begin{eqnarray*}
\chi_{x}^4 &=& 1\\
\chi_{x}\chi_{y^2}  &=& \chi_{y^2}\chi_{x}\chi_t
\end{eqnarray*}
Denote $a=\chi_{x}$, $b=\chi_{y^2}$, $c=\chi_{t}$. Then
$$
\mathbf{G}\left( H^*\right) =\left\langle a, b , c |a^4=b^2=c^2=1, bab=ac\right\rangle\cong G_6.
$$
In addition,
\begin{eqnarray*}
\nu_2 \left(\chi_{y,y^{-1}}\right)&=& \alpha_{1,2}  (y,y^{-1})=1\\
\nu_2 \left(\chi_{x^2y,x^2y^{-1}}\right)&=& \alpha_{1,2} (x^2y,x^2y^{-1})=-1
\end{eqnarray*}
\item Assume that $H\cong H\left(\rightharpoonup_2, \alpha^2 _{1,2}\right)$. Then
\begin{eqnarray*}
\chi_{x}^4 &=& 1\\
\chi_{x}\chi_{y^2}  &=& \chi_{y^2}\chi_{x}
\end{eqnarray*}
Denote $a=\chi_{x}$, $b=\chi_{y^2}$, $c=\chi_{t}$. Then
$$
\mathbf{G}\left( H^*\right) =\left\langle a \right\rangle \times \left\langle b \right\rangle\times \left\langle c \right\rangle\cong \mathbb{Z}_{4} \times\mathbb{Z}_{2} \times \mathbb{Z}_{2}.
$$
In addition,
$$
\nu_2 \left(\chi_{y,y^{-1}}\right)= \nu_2 \left(\chi_{x^2y,x^2y^{-1}}\right)=1
$$
\item Assume that $H\cong H\left(\rightharpoonup_2, \alpha^2 _{1,2} \alpha^2 _{2,2}\right)$. Then
\begin{eqnarray*}
\chi_{x}^4 &=& 1\\
\chi_{x}\chi_{y^2}  &=& \chi_{y^2}\chi_{x}
\end{eqnarray*}
Denote $a=\chi_{x}$, $b=\chi_{y^2}$, $c=\chi_{t}$. Then
$$
\mathbf{G}\left( H^*\right) =\left\langle a \right\rangle \times \left\langle b \right\rangle\times \left\langle c \right\rangle\cong \mathbb{Z}_{4} \times\mathbb{Z}_{2} \times \mathbb{Z}_{2}.
$$
In addition,
$$
\nu_2 \left(\chi_{y,y^{-1}}\right)= \nu_2 \left(\chi_{x^2y,x^2y^{-1}}\right)=-1
$$
\item Assume that $H\cong H\left(\rightharpoonup_2,\beta_1,\alpha _{1,2}\right)\cong H\left(\rightharpoonup_2,\alpha ' \alpha _{1,2}\right)$. Then
\begin{eqnarray*}
\chi_{x}^4 &=& \chi_{t}\\
\chi_{x}\chi_{y^2}  &=& \chi_{y^2}\chi_{x}\chi_{t}=\chi_{y^2}\chi^5_{x}
\end{eqnarray*}
Denote $a=\chi_{x}$ and $b=\chi_{y^2}$. Then
$$
\mathbf{G}\left( H^*\right) =\left\langle a, b  |a^8=b^2=1, bab=a^5\right\rangle\cong G_1.
$$
In addition,
$$
\nu_2 \left(\chi_{y,y^{-1}}\right)= \nu_2 \left(\chi_{x^2y,x^2y^{-1}}\right)=1
$$
\item Assume that $H\cong H\left(\rightharpoonup_2,\beta_1,\alpha _{1,2}\alpha^2_{2,2}\right)\cong H\left(\rightharpoonup_2,\alpha ' \alpha _{1,2}\alpha^2_{2,2}\right)$. Then
\begin{eqnarray*}
\chi_{x}^4 &=& \chi_{t}\\
\chi_{x}\chi_{y^2}  &=& \chi_{y^2}\chi_{x}\chi_{t}=\chi_{y^2}\chi^5_{x}
\end{eqnarray*}
Denote $a=\chi_{x}$ and $b=\chi_{y^2}$. Then
$$
\mathbf{G}\left( H^*\right) =\left\langle a, b  |a^8=b^2=1, bab=a^5\right\rangle\cong G_1.
$$
In addition,
$$
\nu_2 \left(\chi_{y,y^{-1}}\right)= \nu_2 \left(\chi_{x^2y,x^2y^{-1}}\right)=-1
$$
\item Assume that $H\cong H\left(\rightharpoonup_2,\beta_1,\alpha^2_{1,2}\right)\cong H\left(\rightharpoonup_2,\alpha ' \alpha^2_{1,2}\right)$. Then
\begin{eqnarray*}
\chi_{x}^4 &=& \chi_{t}\\
\chi_{x}\chi_{y^2}  &=& \chi_{y^2}\chi_{x}
\end{eqnarray*}
Denote $a=\chi_{x}$ and $b=\chi_{y^2}$. Then
$$
\mathbf{G}\left( H^*\right) =\left\langle a \right\rangle \times \left\langle b \right\rangle\cong \mathbb{Z}_{8} \times\mathbb{Z}_{2} .
$$
In addition,
$$
\nu_2 \left(\chi_{y,y^{-1}}\right)= \nu_2 \left(\chi_{x^2y,x^2y^{-1}}\right)=1
$$
\end{enumerate}
\end{proof}
\begin{proposition}
Assume that $$t\rightharpoonup_3 x = xy^2, \quad t\rightharpoonup_3 y = y.$$ Then  $H\cong H\left( \rightharpoonup_3,   \alpha ^2_{1,2}\right)$ or  $H\left(\rightharpoonup_3,  \beta _1 , \alpha^2 _{1,2}\right)$. Moreover
\begin{enumerate}
\item $\mathbf{G}\left( H\right) \cap Z(H) = \hat G ^L = \left\langle \beta _1 \right\rangle \times \left\langle \beta^2 _2 \right\rangle \cong \mathbb{Z}_{4} \times\mathbb{Z}_{2} $.
\item $\mathbf{G}\left( H^*\right)$ is an abelian group of order $16$ and $H$ has $4$ two-dimensional representations $\pi_1 , \ldots ,\pi _4$  with characters  $\chi_1 , \ldots ,\chi _4$ such that
$$
\nu_2 (\chi _j)=0 \quad\text{ for } j=1,2,3,4
$$

\item When $H\cong H\left(\rightharpoonup_3, \alpha^2 _{1,2}\right)$,
$\mathbf{G}\left( H^*\right) =\left\langle a \right\rangle \times \left\langle b \right\rangle\times \left\langle c \right\rangle\cong \mathbb{Z}_{4} \times\mathbb{Z}_{2} \times \mathbb{Z}_{2}$ and
$$
\pi_j ^2  =  b+ bc +a^2b +a^2bc \quad\text{ for } j=1,2,3,4
$$

\item When $H\cong H\left(\rightharpoonup_3, \beta_1, \alpha^2 _{1,2}\right)$,
$\mathbf{G}\left( H^*\right) =\left\langle a \right\rangle \times \left\langle b \right\rangle\cong \mathbb{Z}_{4} \times\mathbb{Z}_{4}$ and
$$
\pi_j ^2  =  b+ b^3 +a^2b +a^2b^3 \quad\text{ for } j=1,2,3,4
$$
\end{enumerate}

\end{proposition}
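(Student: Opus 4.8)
The plan is to follow the pattern of the two preceding propositions: first identify the equivalence classes, then compute the invariants that separate them. The action $\rightharpoonup_3$ is exactly the $n=2$ instance of Case 11, so I would quote Proposition \ref{121} to get ${\rm H}^2(kL,k^G)=\langle[(\sigma^{\beta_1},{\rm triv})]\rangle\times\langle[({\rm triv},\tau^2_{1,2})]\rangle\cong\mathbb{Z}_2\times\mathbb{Z}_2$, giving four equivalence classes. Such a Hopf algebra is always noncommutative (the action is nontrivial), and from the comultiplication formula $\Delta(p_g\#t)=\sum_{uv=g}\tau_t(u,v)(p_u\#t)\otimes(p_v\#t)$ it is cocommutative iff $\tau_t$ is symmetric; since for abelian $G$ a cocycle is cohomologous to a symmetric one exactly when its class is trivial in $M(G)$, the algebra is noncocommutative iff $\Psi([\tau])=[\tau_t]\neq 1$. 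Thus precisely the two classes in the nontrivial coset of ${\rm Ker}\,\Psi$ are nontrivial, namely $H(\rightharpoonup_3,\alpha^2_{1,2})$ and $H(\rightharpoonup_3,\beta_1,\alpha^2_{1,2})$; the other two are cocommutative and excluded. For computations with the second algebra I would, as in the proof for $\rightharpoonup_2$, replace its representative by the equivalent one with trivial $\sigma$ and $\tau_t=\alpha'\alpha^2_{1,2}$, using $[(\sigma^{\beta_1},{\rm triv})]=[({\rm triv},\tau')]$ with $\tau'_t=\alpha'$ from Proposition \ref{121}.

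Next I would record the fixed points. A direct check gives $G^L=\langle x^2\rangle\times\langle y\rangle\cong\mathbb{Z}_2\times\mathbb{Z}_4$, so $|G^L|=8$, and, reading the action on characters from Case 11, $\hat G^L=\langle\beta_1\rangle\times\langle\beta^2_2\rangle\cong\mathbb{Z}_4\times\mathbb{Z}_2$. Since a group-like of $H$ is central iff it is fixed by $t$, this is claim (1), and $|\mathbf{G}(H^*)|=2|G^L|=16$. The four two-dimensional irreducibles are the induced representations $\pi_{g,t\rightharpoonup g}$ with orbit representatives $g\in\{x,xy,x^3,x^3y\}$; for each such $g$ the first exponent is odd, so $g^2\neq 1$ and $t\rightharpoonup g=x^{a_1}y^{a_2+2}\neq g^{-1}$, and formula (\ref{nu_2}) then yields $\nu_2(\chi_j)=0$ for all four, which is claim (2).

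To finish claims (3) and (4) I would compute $\mathbf{G}(H^*)$ and the squares $\pi_j^2$ from the convolution of the one-dimensional characters. The same comultiplication formula gives $\chi_g\chi_{g'}=\chi_{gg'}$ on the $p$-part and multiplies the value on $\bar t$ by $\tau_t(g,g')$, so the order of $\chi_{x^2}$ is controlled by $\tau_t(x^2,x^2)$. For $\tau_t=\alpha^2_{1,2}$ this is $1$, so $\chi_{x^2}$ has order $2$ and $\mathbf{G}(H^*)=\langle\chi_y\rangle\times\langle\chi_{x^2}\rangle\times\langle\chi_t\rangle\cong\mathbb{Z}_4\times\mathbb{Z}_2\times\mathbb{Z}_2$; for $\tau_t=\alpha'\alpha^2_{1,2}$ the extra factor $\alpha'(x^2,x^2)=-1$ makes $\chi^2_{x^2}=\chi_t$, so $\chi_{x^2}$ has order $4$, $\chi_t$ is absorbed, and $\mathbf{G}(H^*)=\langle\chi_y\rangle\times\langle\chi_{x^2}\rangle\cong\mathbb{Z}_4\times\mathbb{Z}_4$. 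The distinct groups $\mathbf{G}(H^*)$ prove the two algebras are non-isomorphic. Finally, since $G$ is abelian and $t\rightharpoonup g=gy^2$, for every representative $g$ the products $\{g,t\rightharpoonup g\}\cdot\{g,t\rightharpoonup g\}$ collapse to the two elements $x^2$ and $x^2y^2$ of $G^L$, each appearing twice and contributing the two characters $\chi_z,\chi_z\chi_t$ lying over it; rewriting $\chi_{x^2},\chi_{x^2}\chi_t,\chi_{x^2y^2},\chi_{x^2y^2}\chi_t$ in the generators yields $\pi_j^2=b+bc+a^2b+a^2bc$ in the first case and $\pi_j^2=b+b^3+a^2b+a^2b^3$ in the second, with no $\varepsilon$-summand, consistently with $\nu_2(\chi_j)=0$.

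The main obstacle is the bookkeeping of the last paragraph: one must track the cocycle twists $\tau_t(g,g')$ on $G^L$ (and, for the $\beta_1$ algebra, the carrying cocycle $\alpha'$) both to pin down the group structure of $\mathbf{G}(H^*)$ and to decide, for each product $z\in\{x^2,x^2y^2\}$, that the summands over $z$ are genuinely $\chi_z$ and $\chi_z\chi_t$ rather than a repeated character. This is exactly the step where the two Hopf algebras diverge, so it must be carried out explicitly (via the induced-module description of \cite{Ka2} and the indicator formula (\ref{nu_2})) rather than by the symmetry arguments that sufficed in earlier cases.
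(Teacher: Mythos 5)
Your proposal is correct and follows essentially the same route as the paper's proof: invoke Proposition \ref{121} to get the four equivalence classes, replace $H\left(\rightharpoonup_3,\beta_1,\alpha^2_{1,2}\right)$ by the equivalent representative with $\tau_t=\alpha'\alpha^2_{1,2}$, compute the one- and two-dimensional characters by convolution, get $\nu_2=0$ from formula (\ref{nu_2}), and separate the two algebras by whether $\chi_{x^2}^2$ equals $1$ or $\chi_t$. The only deviations are that you supply explicit justifications (the symmetric-cocycle criterion for cocommutativity identifying which two classes are nontrivial, and centrality of group-likes being equivalent to being fixed by $t$) for steps the paper simply asserts; these additions are correct and do not change the approach.
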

\begin{proof}
By Proposition \ref{121}, up to equivalence, there are four Hopf algebras of this type. Two of them,
$H\left( \rightharpoonup_3,   \alpha ^2_{1,2}\right)$ and  $H\left(\rightharpoonup_3,  \beta _1 , \alpha^2 _{1,2}\right)$, are non-trivial.

For further computations we will use the fact that by Proposition \ref{121}  $ H\left(\rightharpoonup_3,\beta_1,\alpha^2_{1,2}\right)$ is equivalent to $ H\left(\rightharpoonup_3,\alpha ' \alpha^2_{1,2}\right)$ for $\alpha '$ defined via
$$
\alpha ' \left( x^{a_{1}}y^{a_{2}},x^{b_{1}}y^{b_{2}}\right) =\left( -1\right)
^{k}
$$
where $k= 0$ if $0\leq a_1+b_1 < 4$ and $k = 1$ if $4\leq a_1+b_1 < 8$.

Let $H$ be a Hopf algebra corresponding to $\rightharpoonup_3$. Then $H$ has $16$ one-dimensional characters of the type $\chi_g \chi_t^j$ for $j=0,1$ and $g\in G^L=\left\langle x^2 \right\rangle \times \left\langle y \right\rangle$ where
\begin{eqnarray*}
\chi_g (p_h) &=& \delta _{g,h} \qquad \chi_g (t) =1\\
\chi_t (p_h) &=& \delta _{1,h} \qquad \chi_g (t) =-1
\end{eqnarray*}
Then $\mathbf{G}\left( H^*\right)$ is abelian and
$$
\chi_{y}^4  =\chi_t^2=\chi_1 =1
$$

$H$ has $4$ two-dimensional characters $\chi_{g,t\rightharpoonup_3 g}$ for $g\in \{ x, x^{-1}, xy,x^{-1}y\}$ where
$$
\chi_{g,t\rightharpoonup_3 g} (p_h) = \delta _{g,h}+ \delta _{t\rightharpoonup_3 g,h} \qquad 
\chi_{g,t\rightharpoonup_3 g} (p_h t) =0.
$$
Then for all $g\in \{ x, x^{-1}, xy,x^{-1}y\}$
\begin{eqnarray*}
\chi_{g, t\rightharpoonup_3 g } ^2 &=& \chi _{x^2}+ \chi _{x^2}\chi _t +\chi _{x^2y^2} +\chi _{x^2y^2}\chi _t\\
\nu_2 \left(\chi_{g, t\rightharpoonup_3 g }\right)&=&0
\end{eqnarray*}

If $H\cong H\left(\rightharpoonup_3, \alpha^2 _{1,2}\right)$ then
$\chi_{x^2}^2 = 1$ and
$$
\mathbf{G}\left( H^*\right) =\left\langle a \right\rangle \times \left\langle b \right\rangle\times \left\langle c \right\rangle\cong \mathbb{Z}_{4} \times\mathbb{Z}_{2} \times \mathbb{Z}_{2}
$$
for $a=\chi_{y}$, $b=\chi_{x^2}$, and $c=\chi_{t}$.

If $H\cong H\left(\rightharpoonup_3,\beta_1,\alpha^2_{1,2}\right)\cong H\left(\rightharpoonup_3,\alpha ' \alpha^2_{1,2}\right)$ then
$\chi_{x^2}^2 = \chi_{t}$ and
$$
\mathbf{G}\left( H^*\right) =\left\langle a \right\rangle \times \left\langle b \right\rangle\cong \mathbb{Z}_{4} \times\mathbb{Z}_{4}
$$
for $a=\chi_{y}$ and $b=\chi_{x^2}$.
\end{proof}

\begin{proposition}
Assume that $$t\rightharpoonup_4 x = x^{-1}y^2, \quad t\rightharpoonup_4 y = y^{-1}.$$ Then  $H\cong H\left( \rightharpoonup_4,   \alpha ^2_{1,2}\right)$ or  $H\left(\rightharpoonup_4,   \alpha^2 _{1,2} \alpha^2 _{2,2}\right)$. Moreover
\begin{enumerate}
\item $\mathbf{G}\left( H\right) \cap Z(H) = \hat G ^L = \left\langle \beta^2 _1 \right\rangle \times \left\langle \beta^2 _2 \right\rangle \cong \mathbb{Z}_{2} \times\mathbb{Z}_{2} $.
\item $\mathbf{G}\left( H^*\right) =\left\langle a \right\rangle \times \left\langle b \right\rangle\times \left\langle c \right\rangle\cong \mathbb{Z}_{2} \times\mathbb{Z}_{2} \times \mathbb{Z}_{2}$.
\item $H$ has $6$ two-dimensional representations $\pi_1 , \ldots ,\pi _6$ with characters  $\chi_1 , \ldots ,\chi _6$ such that
\begin{eqnarray*}
\pi_1 ^2 &=&\pi_2 ^2 = 1+ c +b +bc\\
\pi_3 ^2 &=&\pi_4 ^2 = a+ ac +b +bc\\
\pi_5 ^2 &=&\pi_6 ^2 = ab +abc+b +bc\\
\nu_2 (\chi _j)&=&0 \quad \text{ for } j=3,4,5,6
\end{eqnarray*}
\item When $H\cong H\left(\rightharpoonup_4, \alpha^2 _{1,2}\right)$,
$$
\nu_2 (\chi _j)=1\quad \text{ for } j=1,2
$$
\item When $H\cong H\left(\rightharpoonup_4, \alpha^2_{1,2}\alpha^2_{2,2}\right)$,
$$
\nu_2 (\chi _j)=-1\quad \text{for }  j=1,2
$$
\end{enumerate}
\end{proposition}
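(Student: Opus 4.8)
The plan is to follow the template of the preceding propositions in this section, feeding the cohomological input of Proposition \ref{-12-1} (Case 12 with $n=2$) into the representation-theoretic formulas set up at the start of Section \ref{32}. First I would record the fixed-point data of $\rightharpoonup_4$. A direct computation gives $t\rightharpoonup x^{a}y^{b}=x^{-a}y^{2a-b}$, whence $G^{L}=\langle x^{2}\rangle\times\langle y^{2}\rangle$; using the induced action on $\hat G$ already computed in Proposition \ref{-12-1}, namely $t\rightharpoonup\beta_{1}=\beta_{1}^{-1}$ and $t\rightharpoonup\beta_{2}=\beta_{1}^{2}\beta_{2}^{-1}$, one finds $\hat G^{L}=\langle\beta_{1}^{2}\rangle\times\langle\beta_{2}^{2}\rangle\cong\mathbb{Z}_{2}\times\mathbb{Z}_{2}$. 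Since the central group-like elements of $H$ are exactly the $L$-invariant characters, this gives part (1). By Proposition \ref{-12-1} there are four equivalence classes, indexed by $\{1,\alpha_{2,2}^{2},\alpha_{1,2}^{2},\alpha_{1,2}^{2}\alpha_{2,2}^{2}\}$; the two lying in $\mathrm{Ker}\,\Psi$ are cocommutative (hence trivial), while the two with nontrivial image $\alpha_{1,2}^{2}$ and $\alpha_{1,2}^{2}\alpha_{2,2}^{2}$ are noncocommutative, which identifies the two asserted Hopf algebras.

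Next I would determine $\mathbf G(H^{*})$. As $|L|=2$ we have $|\mathbf G(H^{*})|=2|G^{L}|=8$, with elements $\chi_{g}\chi_{t}^{j}$ for $g\in G^{L}$, $j\in\{0,1\}$. Writing a one-dimensional character as a pair $(g,\epsilon)$ with $\chi(p_{h})=\delta_{g,h}$ and $\chi(1\# t)=\epsilon$, the bicrossed comultiplication specializes to $\Delta(p_{g}\# t)=\sum_{g'g''=g}\tau_{t}(g',g'')\,p_{g'}\# t\otimes p_{g''}\# t$, so the convolution product on $\mathbf G(H^{*})$ is $(g,\epsilon)(g',\epsilon')=(gg',\tau_{t}(g,g')\,\epsilon\epsilon')$. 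Thus the group structure is controlled entirely by $\tau_{t}$ restricted to $G^{L}\times G^{L}$. For both $\tau_{t}=\alpha_{1,2}^{2}$ and $\tau_{t}=\alpha_{1,2}^{2}\alpha_{2,2}^{2}$ this restriction is trivial, since every element of $G^{L}$ has even exponents and the relevant powers of $i$ collapse to $1$; hence $\mathbf G(H^{*})\cong G^{L}\times\langle\chi_{t}\rangle\cong\mathbb{Z}_{2}\times\mathbb{Z}_{2}\times\mathbb{Z}_{2}$, giving part (2) with $a=\chi_{x^{2}}$, $b=\chi_{y^{2}}$, $c=\chi_{t}$.

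For the two-dimensional representations I would use that $H$ has $(|G|-|G^{L}|)/2=6$ of them, one $\pi_{g}$ per two-element orbit $\{g,t\rightharpoonup g\}$; the representatives are $y,x^{2}y$ (giving $\pi_{1},\pi_{2}$), $x,xy^{2}$ (giving $\pi_{3},\pi_{4}$), and $xy,xy^{3}$ (giving $\pi_{5},\pi_{6}$). Restricting to $K=k^{G}$ gives $\pi_{g}|_{K}=\psi_{g}\oplus\psi_{t\rightharpoonup g}$, and since $\psi_{u}\ast\psi_{v}=\psi_{uv}$ on $k^{G}$, one obtains $\pi_{g}^{2}|_{K}=\psi_{g^{2}}+2\psi_{g(t\rightharpoonup g)}+\psi_{(t\rightharpoonup g)^{2}}$. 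A short check shows $(t\rightharpoonup g)^{2}=g^{2}$ and $g(t\rightharpoonup g)=y^{2a}$ both lie in $G^{L}$, so every constituent is one-dimensional; moreover $\chi_{g}^{2}(p_{h}\# t)=0$ because $\pi_{g}(t)$ is off-diagonal, which forces each $2\psi_{h}$ to split evenly into $\chi_{h}+\chi_{h}\chi_{t}$. This yields the three stated values of $\pi_{i}^{2}$. Finally the Frobenius--Schur indicators come directly from formula (\ref{nu_2}): none of the six $g$ satisfy $g^{2}=1$; for $\pi_{3},\dots,\pi_{6}$ one has $t\rightharpoonup g\neq g^{-1}$, so $\nu_{2}=0$, whereas for $\pi_{1},\pi_{2}$ (orbits of $y$ and $x^{2}y$) one has $t\rightharpoonup g=g^{-1}$, so $\nu_{2}(\chi_{j})=\tau_{t}(g,g^{-1})$. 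Evaluating the two cocycles on $(y,y^{-1})$ and $(x^{2}y,x^{2}y^{-1})$ gives $+1$ in every case for $\tau_{t}=\alpha_{1,2}^{2}$ and $-1$ in every case for $\tau_{t}=\alpha_{1,2}^{2}\alpha_{2,2}^{2}$, which is parts (4) and (5).

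The main obstacle will be the bookkeeping in the third step: correctly enumerating the six orbits, matching them to the labels $\pi_{1},\dots,\pi_{6}$ so that the $\pi_{i}^{2}$ and the indicator computations refer to the same representations, and justifying the even split of $2\psi_{h}$ into $\chi_{h}+\chi_{h}\chi_{t}$ rather than $2\chi_{h}$ or $2\chi_{h}\chi_{t}$ --- the latter being precisely where the vanishing of $\chi_{g}^{2}$ on the $t$-layer is needed. Everything else reduces to evaluating the explicit cocycles $\alpha_{1,2}^{2}$ and $\alpha_{1,2}^{2}\alpha_{2,2}^{2}$ on the relevant pairs of elements.
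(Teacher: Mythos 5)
Your proposal is correct and takes essentially the same approach as the paper: invoke Proposition \ref{-12-1} to single out the two noncocommutative equivalence classes, enumerate the one-dimensional characters $\chi_g\chi_t^j$ with $g\in G^L=\left\langle x^2\right\rangle\times\left\langle y^2\right\rangle$, decompose the squares of the six two-dimensional characters by restricting to $K=k^G$ and using the vanishing of these characters on the $t$-layer, and evaluate $\nu_2$ via formula (\ref{nu_2}). Incidentally, your identification of the second nontrivial algebra as $H\left(\rightharpoonup_4,\alpha^2_{1,2}\alpha^2_{2,2}\right)$ is the correct one: the paper's proof text has a slip at this point (it names $H\left(\rightharpoonup_4,\beta_1,\alpha^2_{1,2}\right)$, copied from the preceding proposition, even though $\beta_1\notin\hat G^L$ for this action), whereas the proposition statement itself agrees with your reading.
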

\begin{proof}
By Proposition \ref{-12-1}, up to equivalence, there are four Hopf algebras of this type. Two of them,
$H\left( \rightharpoonup_4,   \alpha ^2_{1,2}\right)$ and $H\left(\rightharpoonup_4,  \beta _1 , \alpha^2 _{1,2}\right)$, are non-trivial.

Let $H$ be a Hopf algebra corresponding to $\rightharpoonup_4$. Then $H$ has $8$ one-dimensional characters of the type $\chi_g \chi_t^j$ for $j=0,1$ and $g\in G^L=\left\langle x^2 \right\rangle \times \left\langle y^2 \right\rangle$ where
\begin{eqnarray*}
\chi_g (p_h) &=& \delta _{g,h} \qquad \chi_g (t) =1\\
\chi_t (p_h) &=& \delta _{1,h} \qquad \chi_g (t) =-1
\end{eqnarray*}
For each one-dimensional character $\chi$, $\chi^2 =1$. Set $a=\chi _{x^2}$, $b=\chi _{y^2}$ and $c=\chi _{t}$. Then
$$
\mathbf{G}\left( H^*\right) =\left\langle a \right\rangle \times \left\langle b \right\rangle\times \left\langle c \right\rangle\cong \mathbb{Z}_{2} \times\mathbb{Z}_{2} \times \mathbb{Z}_{2}.
$$
$H$ has $6$ two-dimensional characters $\chi_{g,t\rightharpoonup_4 g}$ for $g\in \{ y,x^2y, x, x^{-1}, xy, xy^{-1}\}$ where
$$
\chi_{g,t\rightharpoonup_4 g} (p_h) = \delta _{g,h}+ \delta _{t\rightharpoonup_4 g,h} \qquad \chi_{g,t\rightharpoonup_4 g} (p_h t) =0.
$$
Then
\begin{eqnarray*}
\chi_{y,y^{-1}} ^2 &=&\chi_{x^2y,x^2y^{-1}} ^2 = 1+ \chi _t +\chi _{y^2} +\chi _{y^2}\chi _t\\
\chi_{x,x^{-1}y^2} ^2 &=&\chi_{x^{-1},xy^2} ^2 = \chi _{x^2} +\chi _{x^2}\chi _t+\chi _{y^2} +\chi _{y^2}\chi _t\\
\chi_{xy,x^{-1}y} ^2 &=&\chi_{xy^{-1},x^{-1}y^{-1}} ^2 = \chi _{x^2y^2} +\chi _{x^2y^2}\chi _t+\chi _{y^2} +\chi _{y^2}\chi _t\\
\nu_2 \left(\chi_{g, t\rightharpoonup_4 g }\right)&=&0 \quad \text{ for } g\in \{ x, x^{-1}, xy, xy^{-1}\}
\end{eqnarray*}

If $H\cong H\left(\rightharpoonup_4, \alpha^2 _{1,2}\right)$ then  by formula (\ref{nu_2})
\begin{eqnarray*}
\nu_2 \left(\chi_{y, y ^{-1}}\right)&=&\alpha^2 _{1,2}(y, y ^{-1})=1\\
\nu_2 \left(\chi_{x^2y, x^2y ^{-1}}\right)&=&\alpha^2 _{1,2}(x^2y, x^2y ^{-1})=1
\end{eqnarray*}

If $H\cong H\left(\rightharpoonup_4, \alpha^2 _{1,2}\alpha^2 _{2,2}\right)$ then  by formula (\ref{nu_2})
\begin{eqnarray*}
\nu_2 \left(\chi_{y, y ^{-1}}\right)&=&\alpha^2 _{1,2}(y, y ^{-1})\alpha^2 _{2,2}(y, y ^{-1})=-1\\
\nu_2 \left(\chi_{x^2y, x^2y ^{-1}}\right)&=&\alpha^2 _{1,2}(x^2y, x^2y ^{-1})\alpha^2 _{2,2}(x^2y, x^2y ^{-1})=-1
\end{eqnarray*}
\end{proof}


\begin{proposition}
Assume that $$t\rightharpoonup_5 x = y, \quad t\rightharpoonup_5 y = x.$$ Then  $H\cong H\left( \rightharpoonup_5,   \alpha^{\prime} _{1,2}\right)$ or  $H\left(\rightharpoonup_5,  \left(\alpha^{\prime} _{1,2} \right)^2\right)$ for $\alpha^{\prime} _{1,2}$  defined via
$$
\alpha _{1,2}^{\prime }\left( x^{a_{1}}y^{a_{2}},x^{b_{1}}y^{b_{2}}\right)
= \omega^{a_{1}b_{2}-a_{2}b_{1}}(-1)^{(a_1+b_1)k_2+(a_2+b_2)k_1}
$$
where $\omega$ is a primitive $8$-th root of $1$ such that $\omega^2 =i$, and $k_j = 0$ if $0\leq a_j+b_j <4$ and $k_j = 1$ if $4\leq a_j+b_j <8$. Moreover
\begin{enumerate}
\item $\mathbf{G}\left( H\right) \cap Z(H) = \hat G ^L = \left\langle \beta _1  \beta _2 \right\rangle \cong \mathbb{Z}_{4}  $.
\item $\mathbf{G}\left( H^*\right) =\left\langle a \right\rangle \times \left\langle b \right\rangle\cong \mathbb{Z}_{4} \times\mathbb{Z}_{2} $.
\item $H$ has $6$ two-dimensional representations $\pi_1 , \ldots ,\pi _6$ with characters  $\chi_1 , \ldots ,\chi _6$ such that
\begin{eqnarray*}
\pi_1 ^2 &=&\pi_2 ^2 =a+ ab +\pi_6\\
\pi_3 ^2 &=&\pi_4 ^2 = a^3+ a^3b +\pi_6\\
\pi_5 ^2 &=&\pi_6 ^2 = 1+ b +a^2 +a^2b\\
\nu_2 (\chi _j)&=&0 \quad \text{ for } j=1,2,3,4\\
\nu_2 (\chi _j)&=&1\quad \text{ for } j=5,6\\
\nu_4 (\chi _j)&=&1 \quad \text{ for } j=1,2,3,4\\
\nu_4 (\chi _j)&=&2\quad \text{ for } j=5,6
\end{eqnarray*}
\item When $H\cong H\left(\rightharpoonup_5,  \alpha^{\prime} _{1,2}\right)$,
\begin{eqnarray*}
\nu_8 (\chi _j)&=&0\quad \text{ for } j=1,2,3,4\\
\nu_8 (\chi _j)&=&2\quad \text{ for } j=5,6
\end{eqnarray*}
\item When $H\cong H\left(\rightharpoonup_5, \left(\alpha^{\prime} _{1,2} \right)^2\right)$,
$$
\nu_8 (\chi _j)=2\quad \text{for }  j=1,2,3,4,5,6
$$
\end{enumerate}
\end{proposition}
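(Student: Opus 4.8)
The plan is to first pin down the equivalence and isomorphism classes, then read off the structural data (parts (1)--(3)), and finally compute the Frobenius--Schur indicators (parts (4)--(5)), which carry the real content.

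\emph{Step 1 (reduction).} By Proposition~\ref{xy}, for the swap action ${\rm H}^{2}(kL,k^{G})\cong\mathbb{Z}_{4}$ is generated by the class of $\alpha'_{1,2}$, so every nontrivial Hopf algebra of this type is equivalent to one of $H(\rightharpoonup_5,(\alpha'_{1,2})^{j})$ with $j\in\{1,2,3\}$. I would show that $j=1$ and $j=3$ yield isomorphic Hopf algebras by applying part (1) of the Lemma relating similar actions to isomorphic Hopf algebras, with the automorphism $f$ of $G$ interchanging $x$ and $y$: this $f$ commutes with $t$, and by part~1 of Proposition~\ref{12_21}, $\alpha'_{1,2}\circ(f\times f)$ is cohomologous to $\alpha_{2,1}=\alpha_{1,2}^{-1}$, i.e.\ to $(\alpha'_{1,2})^{3}$. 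This leaves exactly the two classes $H(\rightharpoonup_5,\alpha'_{1,2})$ and $H(\rightharpoonup_5,(\alpha'_{1,2})^{2})$ in the statement.

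\emph{Step 2 (structure, parts (1)--(3)).} Since $t$ swaps $x$ and $y$, we have $G^{L}=\{x^{a}y^{a}\}=\langle xy\rangle\cong\mathbb{Z}_{4}$, and part (1) follows from Proposition~\ref{xy}, where $\hat G^{L}=\langle\beta_1\beta_2\rangle$ is the group of central group-likes. For part (2), $|\mathbf{G}(H^{\ast})|=2|G^{L}|=8$ by \cite[Proposition 3.7]{Ka3}; a short computation shows $\alpha'_{1,2}(x^{a}y^{a},x^{b}y^{b})=1$, since on $G^{L}\times G^{L}$ the $\omega$-exponent vanishes and the two sign exponents coincide, so the cocycle restricts trivially to $G^{L}$. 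Hence $\chi_{xy}$ has order $4$ and $\mathbf{G}(H^{\ast})=\langle\chi_{xy}\rangle\times\langle\chi_t\rangle\cong\mathbb{Z}_{4}\times\mathbb{Z}_{2}$, with $a=\chi_{xy}$, $b=\chi_t$. For part (3) I would pick the six orbit representatives and decompose each $\pi_{g,t\rightharpoonup g}^{2}$ by restricting to $k^{G}$: the tensor square has $k^{G}$-weights $g^{2},\,t(g^{2})$ and $g\cdot(t\rightharpoonup g)$, the last with multiplicity $2$ and lying in $G^{L}$. Thus $\pi_{g,t\rightharpoonup g}^{2}=\chi_{g\cdot t\rightharpoonup g}+\chi_{g\cdot t\rightharpoonup g}\chi_t+\pi_{g^{2},t(g^{2})}$ when $g^{2}\notin G^{L}$, and splits into four linear characters when $g^{2}\in G^{L}$. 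Matching $g\cdot(t\rightharpoonup g)$ against the powers of $a=\chi_{xy}$ produces the three displayed ring relations and fixes the labelling $\pi_6=\pi_{x^{2},y^{2}}$.

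\emph{Step 3 (indicators).} The values of $\nu_2$ follow at once from formula~(\ref{nu_2}): $g^{2}=1$ only on the orbit of $x^{2}$, and $t\rightharpoonup g=g^{-1}$ only on the orbit of $x^{3}y$ (where $\tau_t(x^{3}y,xy^{3})=1$ in both cases), giving $\nu_2=1$ on $\pi_5,\pi_6$ and $0$ otherwise. For $\nu_4$ and $\nu_8$ I would use the normalized integral $\Lambda=\tfrac12(p_1\#1+p_1\#t)$ together with $\Delta(p_g\#t)=\sum_{uv=g}\tau_t(u,v)(p_u\#t)\otimes(p_v\#t)$ to evaluate the Sweedler power $\Lambda^{[m]}$. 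For even $m=2\ell$ only the alternating sequences $(g,t\rightharpoonup g,g,\dots)$ survive multiplication of the tensor factors, which yields
\[
\nu_m(\chi_{g,t\rightharpoonup g})=[g^{m}=1]+\tfrac12\,[z^{\ell}=1]\bigl(\Omega_m(g,t\rightharpoonup g,\dots)+\Omega_m(t\rightharpoonup g,g,\dots)\bigr),
\]
where $z=g\cdot(t\rightharpoonup g)\in G^{L}$ and $\Omega_m=\prod_{k=2}^{m}\tau_t(u_1\cdots u_{k-1},u_k)$ is the iterated-cocycle product. Specializing to $m=4,8$ with $\tau_t=(\alpha'_{1,2})^{j}$ gives all the stated values (this also recovers $\nu_4=1$ on $\pi_1,\dots,\pi_4$ and $\nu_4=2$ on $\pi_5,\pi_6$, where $[z^{2}=1]$ holds).

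The main obstacle is the evaluation of $\Omega_8$, which is precisely where the two Hopf algebras separate. As the partial products run through the cyclic subgroup $\langle z\rangle$, the $\omega$-exponents telescope, and the net sign of $\Omega_8$ is governed by the parity of the number of wrap-around factors coming from the exponents $k_1,k_2$ in $\alpha'_{1,2}$: this parity is odd exactly when $z=g\cdot(t\rightharpoonup g)$ has order $4$ (the orbits of $x,x^{2}y,x^{3},x^{3}y^{2}$, i.e.\ $\pi_1,\dots,\pi_4$) and even when $z$ has order at most $2$ (the orbits of $x^{2}$ and $x^{3}y$, i.e.\ $\pi_5,\pi_6$). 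Hence for $\tau_t=\alpha'_{1,2}$ one gets $\Omega_8=-1$ on $\pi_1,\dots,\pi_4$, forcing $\nu_8=0$ there, while passing to $\tau_t=(\alpha'_{1,2})^{2}$ squares every factor, so $\Omega_8=(-1)^{2}=1$ and $\nu_8=2$; on $\pi_5,\pi_6$ the sign count is even in both cases, giving $\nu_8=2$ throughout. The delicate part is the careful bookkeeping of the primitive $8$th-root factor $\omega$ and of these wrap-around signs, and it is exactly this that makes $\nu_8$ distinguish $H(\rightharpoonup_5,\alpha'_{1,2})$ from $H(\rightharpoonup_5,(\alpha'_{1,2})^{2})$.
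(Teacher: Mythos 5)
Your proposal is correct and follows essentially the same route as the paper's proof: reduction to the two classes $H(\rightharpoonup_5,\alpha'_{1,2})$ and $H(\rightharpoonup_5,(\alpha'_{1,2})^2)$ via the swap automorphism $f$ together with Proposition~\ref{12_21}(1), orbit-wise computation of $\mathbf{G}(H^*)$ and of the squares of the two-dimensional characters, $\nu_2$ from formula~(\ref{nu_2}), and $\nu_4,\nu_8$ from Sweedler powers of the integral $\Lambda=\tfrac12(p_1+p_1t)$. The only cosmetic differences are that the paper realizes the isomorphism $H(\rightharpoonup_5,\alpha'_{1,2})\cong H(\rightharpoonup_5,(\alpha'_{1,2})^3)$ by an explicit coboundary correction $\mu$ rather than your appeal to cohomologous cocycles (which implicitly uses ${\rm Ker}\,\Psi=\{1\}$ from Proposition~\ref{xy}), and it evaluates the cocycle contributions to $\Lambda^{[m]}$ in tree form rather than as your telescoping iterated products $\Omega_m$, the two being equal by the $2$-cocycle identity.
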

\begin{proof}
By Proposition \ref{xy}, up to equivalence, there are four Hopf algebras of this type. Three of them,
$H\left( \rightharpoonup_5,  \alpha^{\prime} _{1,2}\right)$,  $H\left(\rightharpoonup_5, \left(\alpha^{\prime} _{1,2} \right)^2\right)$, and $H\left(\rightharpoonup_5, \left(\alpha^{\prime} _{1,2} \right)^3\right)$, are non-trivial.

By Proposition \ref{xy}, $H\left(\rightharpoonup_5, \left(\alpha^{\prime} _{1,2} \right)^j\right)$ is equivalent to $H\left( \rightharpoonup_5, \beta^j,  \alpha^j _{1,2}\right)$ where
$$
\beta\left( x^{a_{1}}y^{a_{2}}\right) =i^{a_1 a_2}.
$$

Define
\begin{eqnarray*}
f :G &\to & G \text{ via} \\
f\left( x\right) &=&t\rightharpoonup_5 x =y\\
f\left( y\right) &=&t\rightharpoonup_5 y =x
\end{eqnarray*}
Then $f ( t\rightharpoonup_5 g) =t\rightharpoonup_5 f (  g) $  and
\begin{eqnarray*}
\beta \circ f \left( x^{a_{1}}y^{a_{2}}\right) &=&  \beta\left( x^{a_{2}}y^{a_{1}}\right)
=i^{ a_{2}a_{1}}\\
\alpha _{1,2}\circ (f \times f) \left( x^{a_{1}}y^{a_{2}},x^{b_{1}}y^{b_{2}}\right) &=&  \alpha _{1,2}\left( x^{a_{2}}y^{a_{1}},x^{b_{2}}y^{b_{1}}\right)
=i^{ a_{2}b_{1}}
\end{eqnarray*}
and therefore
 \begin{eqnarray*}
\beta \circ f  &=&  \beta\\
\alpha _{1,2}\circ (f \times f) &=&  \alpha _{2,1}
\end{eqnarray*}
Define $\mu
\in \left( k^{G}\right) ^{\times }$  via
$$
\mu \left( x^{a_{1}}y^{a_{2}}\right)
=i^{a_{1}a_{2}}
$$
Then, as in part 1 of Proposition \ref{12_21},
$\alpha _{2,1}\left( g,h\right)
\mu \left( g\right) \mu \left( h\right) \mu \left( gh \right)
^{-1} =\alpha ^{3}_{1,2}\left( g,h\right)$. Moreover
$$
\beta (g) \mu ^{-1}\left( g\right) \mu ^{-1}\left( t\rightharpoonup_5 g\right) =\beta ^{3}(g)
$$

Thus $[(\sigma^{\beta } , \tau _{2,1})] = [(\sigma^{\beta ^{3}} , \tau ^{3}_{1,2})] $ and therefore
$$
H(\rightharpoonup_5 , \alpha^{\prime } _{1,2}) \cong H(\rightharpoonup_5 , \beta, \alpha _{1,2})\cong H(\rightharpoonup_5 , \beta, \alpha _{2,1}) \cong H(\rightharpoonup_5 , \beta^3, \alpha^3 _{1,2}) \cong H( \rightharpoonup_5 , (\alpha^{\prime } _{1,2})^{3}).
$$

Let $H$ be a Hopf algebra corresponding to $\rightharpoonup_5$. Then $H$ has $8$ one-dimensional characters of the type $\chi_g \chi_t^j$ for $j=0,1$ and $g\in G^L=\left\langle x y \right\rangle$ where
\begin{eqnarray*}
\chi_g (p_h) &=& \delta _{g,h} \qquad \chi_g (t) =1\\
\chi_t (p_h) &=& \delta _{1,h} \qquad \chi_g (t) =-1
\end{eqnarray*}
Then
\begin{eqnarray*}
\chi_{xy}^4 &=&  \chi_t ^2 =\chi_1 =1\\
\chi_{xy}\chi_t &=&  \chi_t\chi_{xy}
\end{eqnarray*}
Set $a=\chi _{xy}$, $b=\chi _{t}$. Then
$$
\mathbf{G}\left( H^*\right) =\left\langle a \right\rangle \times \left\langle b \right\rangle\cong \mathbb{Z}_{4} \times\mathbb{Z}_{2} .
$$
$H$ has $6$ two-dimensional characters $\chi_{g,t\rightharpoonup_5 g}$ for $g\in \{ x,  x^{-1},  xy^{2}, x^{-1}y^2,  xy^{-1}, x^2\}$ where
$$
\chi_{g,t\rightharpoonup_5 g} (p_h) = \delta _{g,h}+ \delta _{t\rightharpoonup_5 g,h} \qquad \chi_{g,t\rightharpoonup_5 g} (p_h t) =0.
$$
Then
\begin{eqnarray*}
\chi_{x,y} ^2 &=&\chi_{x^{-1}y^2,x^2y^{-1}} ^2 =\chi _{xy} +\chi _{xy}\chi _t+\chi _{x^2,y^2}\\
\chi_{x^{-1},y^{-1}} ^2&=&\chi_{xy^2,x^2y} ^2 =\chi _{x^{-1}y^{-1}} +\chi _{x^{-1}y^{-1}}\chi _t+\chi _{x^2,y^2}\\
\chi_{xy^{-1},x^{-1}y} ^2 &=&\chi_{x^{2},y^{2}} ^2 = 1+ \chi _t+\chi _{x^2y^2} +\chi _{x^2y^2}\chi _t\\
\nu_2 \left(\chi_{x^{2},y^{2}}\right) &=&1\\
\nu_2 \left(\chi_{xy^{-1},x^{-1}y}\right) &=& \tau_t(xy^{-1},x^{-1}y)=1\\
\nu_2 \left(\chi_{g, t\rightharpoonup_5 g }\right)&=&0 \quad \text{ for } g\in \{x,  x^{-1}, xy^{2}, x^{-1}y^2\}
\end{eqnarray*}

We will now compute $4$-th and $8$-th Frobenius-Schur indicators for the two-dimensional characters. As in\cite[Section 4]{KMM}, $\Lambda = \frac{1}{2}(p_1 +p_1t)$ is the integral of $H$ such that $\varepsilon (\Lambda ) =1$. Then, since $\tau _t = \alpha^{\prime } _{1,2}$ or $(\alpha^{\prime } _{1,2})^2$ , $h(t\rightharpoonup_5 h) =(xy)^j$, and $h^4=1$ for all $h\in G$,
\begin{eqnarray*}
\Delta (\Lambda ) &=&\frac{1}{2}\left(\sum_{h_1h_2 =1}p_{h_1}\otimes p_{h_2}  +\sum_{h_1h_2 =1}\tau_t (h_1, h_2) p_{h_1}t\otimes p_{h_2}t\right)\\
\Delta_4 (\Lambda ) &=&\frac{1}{2}\left(\sum_{h_1h_2h_3h_4 =1}p_{h_1}\otimes p_{h_2}\otimes p_{h_3}\otimes p_{h_4} \right. \\
&&\left. +\sum_{h_1h_2h_3h_4 =1}\tau_t (h_1 h_2,h_3h_4)
\tau_t (h_1, h_2)\tau_t (h_3,h_4) p_{h_1}t\otimes p_{h_2}t\otimes p_{h_3}t\otimes p_{h_4}t\right)\\
\Lambda ^{[4]} &=&\frac{1}{2}\left(\sum_{h^4 =1}p_{h}  +\sum_{\left(h(t\rightharpoonup_5 h)\right)^2 =1}\tau_t (h(t\rightharpoonup_5 h),h(t\rightharpoonup_5 h))
\left(\tau_t (h,t\rightharpoonup_5 h)\right)^2 p_{h}\right)\\
 &=&\frac{1}{2}\left(1  +\sum_{h\in \left\langle x y \right\rangle\times\left\langle x^2 \right\rangle }
\left(\tau_t (h,t\rightharpoonup_5 h)\right)^2 p_{h}\right)\\
\Delta_8 (\Lambda ) &=&\frac{1}{2}\left(\sum_{h_1\cdots h_8 =1}p_{h_1}\otimes\cdots\otimes p_{h_8}  +\sum_{h_1\cdots h_8 =1}\tau_t (h_1 h_2h_3h_4,h_5h_6h_7h_8)\tau_t (h_1 h_2,h_3h_4)\tau_t (h_5h_6,h_7h_8)\right. \\
&&\left.
\phantom{\sum_{h_1\cdots h_8 =1}}\cdot\ \tau_t (h_1, h_2)\tau_t (h_3,h_4) \tau_t (h_5,h_6)\tau_t (h_7,h_8) p_{h_1}t\otimes \cdots\otimes p_{h_8}t\right)\\
\Lambda ^{[8]} &=&\frac{1}{2}\left(\sum_{h^8 =1}p_{h}  +\sum_{\left(h(t\rightharpoonup_5 h)\right)^4 =1}\tau_t (h(t\rightharpoonup_5 h)^2,h(t\rightharpoonup_5 h)^2)\left(\tau_t (h(t\rightharpoonup_5 h),h(t\rightharpoonup_5 h))\right)^2
\right. \\
&&\left. \phantom{\sum_{h^8 =1}} \cdot \ \left(\tau_t (h,t\rightharpoonup_5 h)\right)^4 p_{h}\right)=\frac{1}{2}\left(1  +\sum_{h\in G}
\left(\tau_t (h,t\rightharpoonup_5 h)\right)^4 p_{h}\right)
\end{eqnarray*}
Then
\begin{eqnarray*}
\nu_4 \left(\chi_{g, t\rightharpoonup_5 g }\right)&=&1 \quad \text{ for } g\in \{x,  x^{-1}, xy^{2}, x^{-1}y^2\}\\
\nu_4 \left(\chi_{xy^{-1},x^{-1}y}\right) &=& \frac{1}{2}\left(2  + \left(\tau_t(xy^{3},x^{3}y)\right)^2 + \left(\tau_t(x^{3}y,xy^{3})\right)^2\right)=\frac{1}{2}\left(2  + 1 +1\right) =2\\
\nu_4 \left(\chi_{x^2,y^2}\right) &=& \frac{1}{2}\left(2  + \left(\tau_t(x^2,y^2)\right)^2 + \left(\tau_t(y^2,x^{2})\right)^2\right)=\frac{1}{2}\left(2  + 1 +1\right) =2
\end{eqnarray*}

If $H\cong H\left(\rightharpoonup_5, \left(\alpha^{\prime} _{1,2} \right)^2\right)$ then $\Lambda ^{[8]} =1$ and
$$
\nu_8 \left(\chi_{g, t\rightharpoonup_5 g }\right)=2 \quad \text{ for all } g\in \{ x,  x^{-1},  xy^{2}, x^{-1}y^2,  xy^{-1}, x^2\}
$$

If $H\cong H\left( \rightharpoonup_5,  \alpha^{\prime} _{1,2}\right)$ then
\begin{eqnarray*}
\nu_8\left(\chi_{xy^{-1},x^{-1}y}\right) &=&
\nu_8 \left(\chi_{x^2,y^2}\right)=\frac{1}{2}\left(2  + 1 +1\right) =2\\
\nu_8 \left(\chi_{g, t\rightharpoonup_5 g }\right)&= &\frac{1}{2}\left(2  + (-1) +(-1)\right) =0\quad \text{ for  } g\in \{x,  x^{-1},  xy^{2}, x^{-1}y^2\}.
\end{eqnarray*}
Moreover, note that ${\rm exp}\left(H\left(\rightharpoonup_5, \left(\alpha^{\prime} _{1,2} \right)^2\right)\right) =8$ and ${\rm exp}\left(H\left( \rightharpoonup_5,  \alpha^{\prime} _{1,2}\right)\right) =16$.

\end{proof}


\begin{proposition}
Assume that $$t\rightharpoonup_6 x = xy^2, \quad t\rightharpoonup_6 y = x^2y^{-1}.$$ Then  $H\cong H\left( \rightharpoonup_6,   \alpha _{1,1}  \alpha _{1,2} \alpha _{2,2}\right)$ or  $H\left(\rightharpoonup_6, \alpha^2 _{1,1}  \alpha^2 _{1,2} \alpha^2 _{2,2}\right)$. Moreover
\begin{enumerate}
\item $\mathbf{G}\left( H\right) \cap Z(H) = \hat G ^L = \left\langle \beta^2 _1 \right\rangle \times \left\langle \beta^2 _2 \right\rangle \cong \mathbb{Z}_{2} \times\mathbb{Z}_{2} $.
\item $\mathbf{G}\left( H^*\right) =\left\langle a \right\rangle \times \left\langle b \right\rangle\times \left\langle c \right\rangle\cong \mathbb{Z}_{2} \times\mathbb{Z}_{2} \times \mathbb{Z}_{2}$.
\item $H$ has $6$ two-dimensional representations $\pi_1 , \ldots ,\pi _6$ with characters  $\chi_1 , \ldots ,\chi _6$ such that
\begin{eqnarray*}
\pi_1 ^2 &=&\pi_2 ^2 = a+ ac +ab +abc\\
\pi_3 ^2 &=&\pi_4 ^2 = a+ ac +b +bc\\
\pi_5 ^2 &=&\pi_6 ^2 = ab +abc+b +bc\\
\nu_2 (\chi _j)&=&0 \quad \text{ for } j=1,2,3,4,5,6
\end{eqnarray*}
\item When $H\cong H\left(\rightharpoonup_6,   \alpha _{1,1}  \alpha _{1,2} \alpha _{2,2}\right)$,
$$
\nu_4 (\chi _j)=0\quad \text{ for } j=1,2,3,4,5,6
$$
\item When $H\cong H\left(\rightharpoonup_6, \alpha^2 _{1,1}  \alpha^2 _{1,2} \alpha^2 _{2,2}\right)$,
$$
\nu_4 (\chi _j)=2\quad \text{for }  j=1,2,3,4,5,6
$$
\end{enumerate}
\end{proposition}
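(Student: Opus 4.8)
The plan is to follow the template of the preceding five propositions, now specialized to $\rightharpoonup_6$. First I would pin down the two nontrivial Hopf algebras. By Proposition~\ref{122-1} with $n=2$, ${\rm H}^{2}(kL,k^{G})={\rm Im}\,\Psi\cong\mathbb{Z}_{4}$ is generated by $[\alpha_{1,1}\alpha_{1,2}\alpha_{2,2}]=[\alpha_{1,2}]$, so there are three nontrivial classes, namely the first, second, and third powers of this generator. To collapse the first and third I would use the group automorphism $f$ with $f(x)=xy^{2}$ and $f(y)=x^{2}y^{-1}$, i.e. $f$ equal to the action of $t$ itself. It commutes with $\rightharpoonup_6$, and its matrix has determinant $\equiv -1\pmod 4$, so the coboundary bookkeeping of part (1) of Proposition~\ref{12_21} gives $\alpha_{1,2}\circ(f\times f)\equiv\alpha_{1,2}^{-1}$ in $M(G)$. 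Combining this with part (1) of the Lemma in Section~\ref{2m} (absorbing the coboundary, which is harmless since ${\rm H}^{2}={\rm H}^{2}_{c}$) yields $H(\rightharpoonup_6,(\alpha_{1,1}\alpha_{1,2}\alpha_{2,2})^{3})\cong H(\rightharpoonup_6,\alpha_{1,1}\alpha_{1,2}\alpha_{2,2})$, leaving exactly the two algebras named in the statement.

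Next I would record the structure data. A direct computation gives $G^{L}=\langle x^{2}\rangle\times\langle y^{2}\rangle$, which together with $\hat G^{L}=\langle\beta_1^{2}\rangle\times\langle\beta_2^{2}\rangle$ from Proposition~\ref{122-1} establishes part (1). For part (2) I would list the $16$ one-dimensional characters $\chi_{g}\chi_{t}^{j}$ with $g\in G^{L}$, $j=0,1$, and observe that on $G^{L}\times G^{L}$ all relevant exponents are divisible by $4$, so $\tau_t$ restricts trivially there for both cocycles. Hence every $\chi_{g}$ ($g\in G^{L}$) squares to $1$ and the $\chi_{g}$ commute, giving $\mathbf{G}(H^{\ast})\cong\mathbb{Z}_{2}\times\mathbb{Z}_{2}\times\mathbb{Z}_{2}$ with $a=\chi_{x^{2}}$, $b=\chi_{y^{2}}$, $c=\chi_{t}$.

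For part (3) the crucial structural feature of $\rightharpoonup_6$ is that $g^{2}\in G^{L}$ for every $g\in G$, since $t\rightharpoonup(x^{2a}y^{2b})=x^{2a}y^{-2b}=x^{2a}y^{2b}$. Writing $g'=t\rightharpoonup g$, this forces $g'^{2}=g^{2}$, so the convolution square $\chi_{g,t\rightharpoonup g}^{2}$ is supported only on the idempotents $p_{g^{2}}$ and $p_{gg'}$ (both in $G^{L}$), while its $t$-component vanishes because $\chi_{g,t\rightharpoonup g}$ is zero on every $p_{h}t$. Decomposing into irreducibles then gives $\chi_{g,t\rightharpoonup g}^{2}=\chi_{g^{2}}+\chi_{g^{2}}c+\chi_{gg'}+\chi_{gg'}c$, a sum of four one-dimensional characters. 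Reading this off for one representative of each of the six $L$-orbits produces the three stated values of $\pi_j^2$. The vanishing $\nu_{2}(\chi_{j})=0$ then follows from formula~(\ref{nu_2}): the condition $t\rightharpoonup g=g^{-1}$ forces both exponents of $g$ to be even, i.e. $g\in G^{L}$, so each orbit lands in the ``otherwise'' branch.

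The remaining and most technical step, and the main obstacle, is $\nu_{4}$. I would compute the fourth Sweedler power of the normalized integral $\Lambda=\tfrac12(p_{1}+p_{1}t)$ exactly as in the proof for $\rightharpoonup_5$. Because $h^{4}=1$ and $\bigl(h(t\rightharpoonup h)\bigr)^{2}=x^{4(a+b)}y^{4a}=1$ for every $h=x^{a}y^{b}$, the sum defining $\Lambda^{[4]}$ runs over all of $G$ and collapses to $\Lambda^{[4]}=\tfrac12\bigl(1+\sum_{h\in G}\tau_t(hh',hh')\,(\tau_t(h,h'))^{2}\,p_{h}\bigr)$ with $h'=t\rightharpoonup h$, whence $\nu_{4}(\chi_{g,t\rightharpoonup g})=\tfrac12\bigl(2+c_{g}+c_{g'}\bigr)$ where $c_{g}=\tau_t(gg',gg')\,(\tau_t(g,g'))^{2}$. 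Evaluating $c_{g},c_{g'}$ on each orbit, the cocycle $\alpha_{1,1}\alpha_{1,2}\alpha_{2,2}$ yields $c_{g}=c_{g'}=-1$, so $\nu_{4}=0$, while $\alpha_{1,1}^{2}\alpha_{1,2}^{2}\alpha_{2,2}^{2}$ yields $c_{g}=c_{g'}=1$, so $\nu_{4}=2$. The work here is the bookkeeping needed to extract $\Lambda^{[4]}$ and to verify that the $\tau$-factors conspire to the same value on all six orbits; this becomes routine once the vanishing of the $t$-part of the squares and the identity $g^{2}\in G^{L}$ are established.
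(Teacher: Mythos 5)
Your proposal follows the paper's own proof almost step for step: the same automorphism $f$ (the action of $t$ itself) is used to identify the first and third powers of the cocycle, the same enumeration of one- and two-dimensional characters, the same decomposition of the squares $\chi_{g,t\rightharpoonup_6 g}^{2}$ into four one-dimensional characters supported on $p_{g^2}$ and $p_{g(t\rightharpoonup_6 g)}$, and the same computation of $\nu_4$ via the Sweedler power $\Lambda^{[4]}=\tfrac12\bigl(1+\sum_{h\in G}(\tau_t(h,t\rightharpoonup_6 h))^{2}p_h\bigr)$. Parts (2)--(5) of your argument are essentially the paper's, and your evaluations check out.

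The one genuine issue is the justification of the reduction to two Hopf algebras. The paper computes directly that $(\alpha_{1,1}\alpha_{1,2}\alpha_{2,2})\circ(f\times f)=\alpha^{3}_{1,1}\alpha^{3}_{1,2}\alpha^{3}_{2,2}$ \emph{exactly, as cocycles}, so part (1) of the isomorphism lemma in Section \ref{2m} applies with no coboundary correction at all. You instead work in $M(G)$, obtaining only $[\alpha_{1,2}\circ(f\times f)]=[\alpha_{1,2}]^{-1}$, and then claim the coboundary discrepancy can be absorbed ``since ${\rm H}^{2}={\rm H}^{2}_{c}$.'' That justification is false in general: modifying $\tau_t$ by an element of ${\rm B}^{2}(G,k^{\times})$ can change the class in ${\rm H}^{2}(kL,k^{G})$ and even the isomorphism type of $H$. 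This paper itself supplies a counterexample: for $\rightharpoonup_2$, the cocycles $\alpha^{2}_{1,2}$ and $\alpha'\alpha^{2}_{1,2}$ differ by the $G$-coboundary $\alpha'$ (which is $\partial\eta$ by the proof of Proposition \ref{12_21}), yet $H\left(\rightharpoonup_2,\alpha^{2}_{1,2}\right)$ and $H\left(\rightharpoonup_2,\beta_1,\alpha^{2}_{1,2}\right)=H\left(\rightharpoonup_2,\alpha'\alpha^{2}_{1,2}\right)$ are non-isomorphic, having duals $(\mathbb{Z}_2)^{2}\times\mathbb{Z}_4$ and $\mathbb{Z}_2\times\mathbb{Z}_8$ respectively. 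What actually rescues your argument in the present case is that $\Psi$ is \emph{injective} here: Proposition \ref{122-1} gives ${\rm Ker}\,\Psi=\{1\}$ and ${\rm H}^{2}(kL,k^{G})\cong{\rm Im}\,\Psi\cong\mathbb{Z}_4$, so equality of images in $M(G)$ does imply equality in ${\rm H}^{2}(kL,k^{G})$ and hence equivalence of the extensions. You should cite that, not ${\rm H}^{2}={\rm H}^{2}_{c}$ (or simply do the paper's direct computation). Two smaller slips: there are $8$ one-dimensional characters, not $16$, since $|G^{L}|=4$ and $|\mathbf{G}(H^{\ast})|=2|G^{L}|$ (consistent with your own conclusion $\mathbf{G}(H^{\ast})\cong(\mathbb{Z}_2)^{3}$); and for $\nu_2=0$ you should also note that $g^{2}=1$ forces $g\in G^{L}$, so the first branch of formula (\ref{nu_2}) never applies to a two-dimensional character, not only the second.
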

\begin{proof}
By Proposition \ref{122-1}, up to equivalence, there are four Hopf algebras of this type. Three of them,
$H\left( \rightharpoonup_6,   \alpha _{1,1}  \alpha _{1,2} \alpha _{2,2}\right)$,  $H\left(\rightharpoonup_6, \alpha^2 _{1,1}  \alpha^2 _{1,2} \alpha^2 _{2,2}\right)$, and $H\left(\rightharpoonup_6, \alpha^3 _{1,1}  \alpha^3 _{1,2} \alpha^3 _{2,2}\right)$, are non-trivial.

Define
\begin{eqnarray*}
f :G &\to & G \text{ via} \\
f\left( x\right) &=&t\rightharpoonup_6 x =xy^2\\
f\left( y\right) &=&t\rightharpoonup_6 y =x^2y^{-1}
\end{eqnarray*}
Then $f ( t\rightharpoonup_6 g) =t\rightharpoonup_6 f (  g) $  and
\begin{eqnarray*}
\alpha _{1,1}\alpha _{1,2}\alpha  _{2,2}\circ (f \times f) \left( x^{a_{1}}y^{a_{2}},x^{b_{1}}y^{b_{2}}\right) &=&  \alpha _{1,1}\alpha _{1,2}\alpha  _{2,2}\left( x^{a_{1}+2a_{2}}y^{2a_{1}-a_{2}},x^{b_{1}+2b_{2}}y^{2b_{1}-b_{2}}\right)\\
&=&i^{ a_{1}b_{1}}(-1)^{a_{1}b_{2}+a_{2}b_{1}}  i^{ -a_{1}b_{2}}(-1)^{a_{1}b_{1}+a_{2}b_{2}}  i^{ a_{2}b_{2}}(-1)^{a_{1}b_{2}+a_{2}b_{1}} \\ &=&i^{-a_{1}b_{1}-a_{1}b_{2}-a_{2}b_{2}}
\end{eqnarray*}
and therefore
\begin{equation*}
\alpha _{1,1}\alpha _{1,2}\alpha  _{2,2}\circ (f \times f)= \alpha ^{3}_{1,1}\alpha ^{3}_{1,2}\alpha ^{3} _{2,2}
\end{equation*}
Thus $H(\rightharpoonup_6, \alpha _{1,1}\alpha _{1,2}\alpha  _{2,2})\cong  H(\rightharpoonup_6  \alpha ^{3}_{1,1}\alpha ^{3}_{1,2}\alpha ^{3} _{2,2})$.

Let $H$ be a Hopf algebra corresponding to $\rightharpoonup_6$. Then $H$ has $8$ one-dimensional characters of the type $\chi_g \chi_t^j$ for $j=0,1$ and $g\in G^L=\left\langle x^2 \right\rangle \times \left\langle y^2 \right\rangle$ where
\begin{eqnarray*}
\chi_g (p_h) &=& \delta _{g,h} \qquad \chi_g (t) =1\\
\chi_t (p_h) &=& \delta _{1,h} \qquad \chi_g (t) =-1
\end{eqnarray*}
For each one-dimensional character $\chi$, $\chi^2 =1$. Set $a=\chi _{x^2}$, $b=\chi _{y^2}$ and $c=\chi _{t}$. Then
$$
\mathbf{G}\left( H^*\right) =\left\langle a \right\rangle \times \left\langle b \right\rangle\times \left\langle c \right\rangle\cong \mathbb{Z}_{2} \times\mathbb{Z}_{2} \times \mathbb{Z}_{2}.
$$
$H$ has $6$ two-dimensional characters $\chi_{g,t\rightharpoonup_6 g}$ for $g\in \{ x,  x^{-1}, y, y^{-1}, xy, xy^{-1}\}$ where
$$
\chi_{g,t\rightharpoonup_6 g} (p_h) = \delta _{g,h}+ \delta _{t\rightharpoonup_6 g,h} \qquad \chi_{g,t\rightharpoonup_6 g} (p_h t) =0.
$$
Then
\begin{eqnarray*}
\chi_{x,xy^2} ^2 &=&\chi_{x^{-1},x^{-1}y^2} ^2 = \chi _{x^2} +\chi _{x^2}\chi _t+\chi _{x^2y^2} +\chi _{x^2y^2}\chi _t\\
\chi_{y,x^2y^{-1}} ^2 &=&\chi_{y^{-1},x^2y} ^2 = \chi _{x^2}+ \chi _{x^2}\chi _t +\chi _{y^2} +\chi _{y^2}\chi _t\\
\chi_{xy,x^{-1}y} ^2 &=&\chi_{xy^{-1},x^{-1}y^{-1}} ^2 = \chi _{x^2y^2} +\chi _{x^2y^2}\chi _t+\chi _{y^2} +\chi _{y^2}\chi _t\\
\nu_2 \left(\chi_{g, t\rightharpoonup_6 g }\right)&=&0 \quad \text{ for all } g\in \{x,  x^{-1}, y, y^{1}, xy, xy^{-1}\}
\end{eqnarray*}

We will now compute $4$-th Frobenius-Schur indicators for the two-dimensional characters. Recall that $\Lambda = \frac{1}{2}(p_1 +p_1t)$ is the integral of $H$ such that $\varepsilon (\Lambda ) =1$. Then, since $\tau _t = \alpha _{1,1}  \alpha _{1,2} \alpha _{2,2}$ or $\alpha ^2_{1,1}  \alpha ^2_{1,2} \alpha ^2_{2,2}$ , $h(t\rightharpoonup_6 h) =x^{2j}y^{2k}$, and $h^4=1$ for all $h\in G$,
\begin{eqnarray*}
\Delta (\Lambda ) &=&\frac{1}{2}\left(\sum_{h_1h_2 =1}p_{h_1}\otimes p_{h_2}  +\sum_{h_1h_2 =1}\tau_t (h_1, h_2) p_{h_1}t\otimes p_{h_2}t\right)\\
\Delta_4 (\Lambda ) &=&\frac{1}{2}\left(\sum_{h_1h_2h_3h_4 =1}p_{h_1}\otimes p_{h_2}\otimes p_{h_3}\otimes p_{h_4} \right. \\
&&\left. +\sum_{h_1h_2h_3h_4 =1}\tau_t (h_1 h_2,h_3h_4)
\tau_t (h_1, h_2)\tau_t (h_3,h_4) p_{h_1}t\otimes p_{h_2}t\otimes p_{h_3}t\otimes p_{h_4}t\right)\\
\Lambda ^{[4]} &=&\frac{1}{2}\left(\sum_{h^4 =1}p_{h}  +\sum_{\left(h(t\rightharpoonup_6 h)\right)^2 =1}\tau_t (h(t\rightharpoonup_6 h),h(t\rightharpoonup_6 h))
\left(\tau_t (h,t\rightharpoonup_6 h)\right)^2 p_{h}\right)\\
 &=&\frac{1}{2}\left(1  +\sum_{h\in G}
\left(\tau_t (h,t\rightharpoonup_6 h)\right)^2 p_{h}\right)
\end{eqnarray*}

If $H\cong H\left(\rightharpoonup_6, \alpha ^2_{1,1}  \alpha ^2_{1,2} \alpha ^2_{2,2}\right)$ then $\Lambda ^{[4]} =1$ and
$$
\nu_4 \left(\chi_{g, t\rightharpoonup_6 g }\right)=2 \quad \text{ for all } g\in \{x,  x^{-1}, y, y^{-1}, xy, xy^{-1}\}.
$$

If $H\cong H\left(\rightharpoonup_6, \alpha _{1,1}  \alpha _{1,2} \alpha _{2,2}\right)$ then
$$
\nu_4 \left(\chi_{g, t\rightharpoonup_6 g }\right)= \frac{1}{2}\left(2  + (-1) +(-1)\right) =0\quad \text{ for all } g\in \{x,  x^{-1}, y, y^{-1}, xy, xy^{-1}\}.
$$
Moreover, note that ${\rm exp}\left(H\left(\rightharpoonup_6, \alpha ^2_{1,1}  \alpha ^2_{1,2} \alpha ^2_{2,2}\right)\right) =4$ and ${\rm exp}\left(H\left(\rightharpoonup_6, \alpha _{1,1}  \alpha _{1,2} \alpha _{2,2}\right)\right) =8$.
\end{proof}
Since Hopf algebras associated with two different actions are isomorphic only if the corresponding matrices are similar, the above six propositions can be combined in the following theorem:
\begin{theorem}\label{iso4x4}
There are exactly $16$ nontrivial non-isomorphic semisimple
Hopf algebras of dimension $32$ with group of group-like elements isomorphic to $%
\mathbb{Z}_{4}\times \mathbb{Z}_{4}$. Moreover, all of them can be distinguished by categorical invariants, such as their Grothendieck ring structure or their (higher) Frobenius-Schur indicators. Therefore, the corresponding representation categories are also different and these Hopf algebras are not twist-equivalent to each other.
\end{theorem}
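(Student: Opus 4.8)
The plan is to assemble the six preceding propositions, one for each similarity class of order-$2$ actions of $L$ on $G\cong\mathbb{Z}_4\times\mathbb{Z}_4$. By Theorem~\ref{action} in the case $n=2$ these actions fall into exactly six conjugacy classes, represented by $\rightharpoonup_1,\dots,\rightharpoonup_6$, and by the reduction of Section~\ref{main} every nontrivial $H$ with $\mathbf{G}(H)\cong\mathbb{Z}_4\times\mathbb{Z}_4$ is an abelian extension attached to one of them. Each proposition lists, up to isomorphism, the nontrivial Hopf algebras for its action: one reads off all equivalence classes from the cohomology computations (Propositions~\ref{-1-1}, \ref{1-1}, \ref{121}, \ref{-12-1}, \ref{xy}, \ref{122-1}) and then removes the redundancies produced by the group automorphisms $f$ through part~(1) of the lemma of Section~\ref{2m} on similar actions, obtaining $2,6,2,2,2,2$ classes respectively. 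Since by part~(2) of that lemma isomorphic extensions (with $G$ abelian) have similar actions, the six families are pairwise disjoint, so there are \emph{at most} $2+6+2+2+2+2=16$ isomorphism classes, with the explicit representatives named in the propositions.

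For the lower bound and the stronger non-equivalence statement I would use the invariants computed in those propositions and collected in Table~1: for each $H$ the group $\mathbf{G}(H^{*})$ of invertible objects of $\mathrm{Rep}(H)$, the ring structure of $K_0(H)$ recorded through the squares $\pi_j^2$, and the Frobenius-Schur indicators $\nu_2$, together with $\nu_4$ and $\nu_8$ where needed (equivalently the numbers $N_{2,j}$, $N_{4,2}$, $N_{8,2}$). The step here is the finite verification that no two of the sixteen rows carry the same data. Most pairs are already separated by $\mathbf{G}(H^{*})$ together with the fusion rules; the genuinely delicate cases are those sharing both the dual group and the Grothendieck ring, which are split only by indicators: within $\rightharpoonup_1$, $\rightharpoonup_2$ and $\rightharpoonup_4$ by the signs of $\nu_2$, and the two algebras for $\rightharpoonup_5$ (resp.\ $\rightharpoonup_6$) only by $\nu_8$ (resp.\ $\nu_4$), i.e.\ by $N_{8,2}$ (resp.\ $N_{4,2}$), in line with their distinct exponents ($16$ vs.\ $8$ for $\rightharpoonup_5$, and $8$ vs.\ $4$ for $\rightharpoonup_6$).

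Finally, to pass from ``non-isomorphic'' to ``not twist-equivalent'' I would invoke that all the invariants used are invariants of the tensor category $\mathrm{Rep}(H)$: the Grothendieck ring is manifestly so, the higher Frobenius-Schur indicators are gauge invariants, and the group of \emph{central} group-like elements $\mathbf{G}(H)\cap Z(H)$ is unchanged under a Drinfeld twist, since a central group-like $g$ satisfies $\Delta^{J}(g)=J(g\ot g)J^{-1}=g\ot g$ (as $g\ot g$ is central) while the algebra is untouched. A twist $H^{J}$ has $\mathrm{Rep}(H^{J})$ monoidally equivalent to $\mathrm{Rep}(H)$, so twist-equivalent Hopf algebras share every entry of Table~1. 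As the sixteen rows are pairwise distinct, no two of the algebras are twist-equivalent; in particular their representation categories are pairwise inequivalent, and a fortiori the algebras are pairwise non-isomorphic. Combined with the upper bound of the first paragraph, this gives \emph{exactly} sixteen classes and the full statement.

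The main obstacle is twofold. Computationally it is the bookkeeping already carried out in the six propositions --- producing the representatives, the dual groups, the fusion rules, and especially the higher indicators $\nu_4,\nu_8$ via the comultiplication powers $\Lambda^{[m]}$ --- followed by the exhaustive pairwise comparison across Table~1. Conceptually, the subtle point is the reliance on the gauge-invariance of the higher Frobenius-Schur indicators: it is precisely this property, rather than mere invariance under isomorphism, that upgrades the table from a proof of pairwise non-isomorphism to a proof of pairwise twist-inequivalence.
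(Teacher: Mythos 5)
Your proposal is correct and takes essentially the same route as the paper: the paper's proof of Theorem~\ref{iso4x4} consists precisely of combining the six propositions of Section~\ref{32} (which yield the $2+6+2+2+2+2=16$ representatives and their invariants), using part~(2) of the similarity lemma of Section~\ref{2m} to keep the six families disjoint, and appealing to the categorical (gauge) invariance of the Grothendieck ring data and higher Frobenius--Schur indicators to conclude twist-inequivalence. Your added observation that $\mathbf{G}(H)\cap Z(H)$ is itself a twist invariant is a correct (and slightly stronger) supplement, but it is not needed, since the data $\mathbf{G}(H^{*})$, $N_{2,j}$, $N_{4,2}$, $N_{8,2}$ already separate all sixteen algebras.
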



\end{document}